\newcommand{\A}{\mathcal{A}}
\newcommand{\B}{\mathcal{B}}
\newcommand{\C}{\mathcal{C}}
\newcommand{\E}{\mathbb{E}}
\newcommand{\M}{\mathcal{M}}
\newcommand{\R}{\mathbb{R}}
\newcommand{\e}{\mathrm{e}}
\newcommand{\X}{\mathbb{X}}
\newcommand{\dd}{\mathrm{d}}
\renewcommand{\d}{\mathrm{d}}
\newcommand{\1}{\mathbbm{1}}
\newcommand{\pb}{\mathcal{P}}
\newcommand{\ps}{\mathcal{P}}
\newcommand{\W}{\mathcal{W}}
\newtheorem{theorem}{Theorem}[section]
\newtheorem{proposition}[theorem]{Proposition}
\newtheorem{lemma}[theorem]{Lemma}
\newtheorem{corollary}[theorem]{Corollary}
\theoremstyle{definition}
\newtheorem{definition}[theorem]{Definition}
\theoremstyle{remark}
\newtheorem{remark}[theorem]{Remark}
\title{\bf Mean-field limits à la Tanaka and large deviations for particle systems with network interactions}
\author[1]{Louis-Pierre \textsc{Chaintron}}
\author[2]{Antoine \textsc{Diez}}
\date{}
\affil[1]{\small
École Polytechnique Fédédrale de Lausanne (EPFL), Institute of Mathematics, 1015 Lausanne, Switzerland.
}
\affil[2]{\small
Mathematical Application Research Team, Division of Applied Mathematical Science, 
RIKEN Center for Interdisciplinary Theoretical and Mathematical Sciences (iTHEMS), 
RIKEN iTHEMS Wako Saitama 351-0198, Japan
}
\begin{document}

\maketitle

\begin{abstract}

This article proposes a unified framework to study non-exchangeable mean-field particle systems with some general interaction mechanisms. The starting point is a fixed-point formulation of particle systems originally due to Tanaka that allows us to prove mean-field limit and large deviation results in an abstract setting. While it has been recently shown that such formulation encompasses a large class of exchangeable particle systems, we propose here a setting for the non-exchangeable case, including the case of adaptive interaction networks. We introduce sufficient conditions on the network structure that imply the mean-field limit and a new large deviations principle for the interaction measure. Finally, we formally highlight important models for which it is possible to derive a closed PDE characterization of the limit. 

\end{abstract}

\section{Introduction}

In his seminal article \cite{tanaka_limit_1982}, Tanaka considers the by-now classical particle system $\vec{X}^N := ( X^{i,N} )_{1 \leq i \leq N}$ of $N$ interacting diffusion processes $X^{i,N} := (X^{i,N}_t)_{0 \leq t \leq T}$ given by
\begin{equation}\label{eq:mckeanvlasov}
\dd X^{i,N}_t = \frac{1}{N}\sum_{j=1}^N K(X^{i,N}_t,X^{j,N}_t) \dd t + \dd B^i_t, \quad i\in\{1,\ldots,N\},
\end{equation}
where the $B^i := (B^i_t)_{0 \leq t \leq T}$, $i \geq 1$, are independent Brownian motions and $K:\R^d\times\R^d\to\R^d$ is a sufficiently regular interaction function. This system has been originally introduced and studied by McKean \cite{mckean_propagation_1969} following Kac's program \cite{kac_foundations_1956} in mathematical kinetic theory. Since then, McKean's model has become the building block of countless variations and applications. The objective was the rigorous derivation of statistical descriptions of the behavior of this high-dimensional SDE system as the number of particles tends to infinity. Such approach, already prefigured by Boltzmann, reduces the complexity of large stochastic systems into the study of low-dimensional (nonlinear) PDEs, but raises a number of mathematical challenges. We refer to the review articles \cite{jabin_mean_2017,chaintron_propagation_2022,chaintron_propagation_2022a,fathi_recent_2023} and to the classical lecture notes \cite{sznitman_topics_1991,meleard_asymptotic_1996} for more details and an exposition of the numerous applications.

It has been known for a long time that the problem can be essentially reduced to the question of the convergence as $N\to+\infty$ of the sequence of random (pathwise) empirical measures
\[ \pi ( \vec{X}^N) := \frac{1}{N}\sum_{i=1}^N \delta_{X^{i,N}} \in \ps ( \mathcal{C}([0,T],\R^d) ).\]
When the particles are initially independent and identically distributed, the convergence of the time marginal at time $t=0$ is a consequence of the law of large numbers. The so-called \emph{propagation of chaos} result \cite{kac_foundations_1956} shows that despite the interactions, independence is asymptotically preserved at any further time in the many particle limit. The sequence of empirical measures can thus be shown to converge towards the law of a typical particle, which is often characterized as the solution of a PDE or a martingale problem. 

As a first informal description, Tanaka's approach developed in \cite[Section~2.1]{tanaka_limit_1982} is also based on this fundamental idea that any property that holds true for the initial or input data should propagate to the particle system. The main difference is that Tanaka includes not only the initial conditions but also all the necessary information in the construction of the particle dynamics, in the present case, the Brownian motions $B^i$'s. The central object in Tanaka's approach is then a map $\mathbb{X}^{\alpha_N}:\C_{\R^d} \to \C_{\R^d}$ defined on the space $\C_{\R^d} := \mathcal{C}([0,T],\R^d)$ of $\R^d$-valued continuous processes over a fixed time interval $[0,T]$ that transform this \emph{input data} into the McKean-Vlasov system \eqref{eq:mckeanvlasov}. This map is defined by the following fixed-point equation (posed in a suitable functional space):
\[\mathbb{X}^{\alpha_N}_t(\omega) = \int_0^t \int_{\C_{\R^d}} K\big(\mathbb{X}^{\alpha_N}_s(\omega),\mathbb{X}^{\alpha_N}_s(\omega')\big)\alpha_N(\dd\omega') \dd s + \omega_t,\]
where $\omega \in \C_{\R^d}$ and $\alpha_N \in \pb(\C_{\R^d})$ is a given parameter. 
Choosing the empirical measure of the driving noises
\begin{equation} \label{eq:alphaTanaka}
\alpha_N = \pi ( \vec{B}^N ) :=  \frac{1}{N} \sum_{i=1}^N \delta_{B^i} \in \ps ( \C_{\R^d} ), 
\end{equation} 
the particle system \eqref{eq:mckeanvlasov} can then be realized as $X^{i,N}_t := \mathbb{X}^{\alpha_N}_t(B^i )$.
With this construction, the limit $N\to+\infty$ reduces to a mere continuity property of the map $\mathbb{X}^{\alpha_N}$ with respect to its parameter. In the present case, by the law of large numbers, $\alpha_N \to \mathscr{W}$ where $\mathscr{W}$ is the Wiener measure, and it is thus expected that
\[ X^{i,N} \underset{N\to+\infty}{\longrightarrow} \overline{X}^i := \mathbb{X}^\mathscr{W} (B^i ), \quad 1 \leq i \leq N.\]
Since the Brownian motions are i.i.d with common law $\mathscr{W}$, the limit processes $\overline{X}^i$ are independent copies of the nonlinear McKean-Vlasov process $\overline{X}$ defined by
\begin{align*}
\dd\overline{X}_t &= \int_{\R^d} K(\overline{X}_t, y) f_t(\dd y) + B_t,\qquad f_t = \mathrm{Law}(\overline{X}_t).
\end{align*}

The convergence of the empirical measure $\pi ( \vec{X}^N )$ can be also re-casted as a continuity result for the map $\alpha \in \ps ( \C_{\R^d} ) \mapsto \mathbb{X}^\alpha_\# \alpha$ by noticing that $\pi ( \vec{X}^N ) = \mathbb{X}^{\alpha_N}_\# \alpha_N$. Importantly, large deviation results follow almost immediately since, by the contraction principle, the continuity of $\alpha \mapsto \mathbb{X}^{\alpha}_\# \alpha$ induces a large deviation principle for $(\mathrm{Law}(\pi(\vec{X}^N))_{N \geq 1}$ as soon as one is known for $(\mathrm{Law}(\alpha_N))_{N \geq 1}$. Here we recall that given a measurable map $T:E\to F$ between two measurable spaces and a measure $\mu$ on $E$, the pushforward measure denoted by $T_\#\mu$ is a measure on $F$ defined by $T_\#\mu(B) = \mu(T^{-1}(B))$ for any measurable $B\subset F$.

Although Tanaka's approach was historically among the first ones developed in the mean-field literature, it has grown in popularity only recently \cite{backhoff_mean_2020,coghi_pathwise_2020,chaintron_quasicontinuity_2024}. Perhaps one of its main interests is its potential for generalization, as it has been recently shown in \cite{coghi_pathwise_2020}. Indeed, as the core idea is to transform some abstract input data into a particle system, the McKean-Vlasov system appears as just one example that can be naturally generalized by considering more abstract interaction functions, possibly in a pathwise setting, and importantly other types of noise or initial data that may include correlations. We refer to \cite{coghi_pathwise_2020} for a list of models that fall into this generalized framework. The main challenge then becomes to construct a sufficiently regular map $\X$ that can carry the information on the input data to the system of interest. The simplest approach, here and in \cite{coghi_pathwise_2020}, is to define this map as the solution of a fixed point problem in an appropriate functional setting. We notice here that this approach is not restricted to the study of mean-field properties and, strictly speaking, it is much more general than a propagation of chaos result in the sense of Kac. Actually, it is a powerful approach to study parametrized equations in general, not necessarily particle systems. In particular and on that matter, we refer to \cite{chaintron_quasicontinuity_2024} for a discussion on the analogy between particle systems and the Freidlin-Wentzell dynamics using Tanaka's approach. 

The purpose of the present article is to introduce another generalization of Tanaka's idea but, unlike \cite{coghi_pathwise_2020}, our main focus is not to generalize the space or the form of the input data but rather to consider parameters $\alpha_N$ in an abstract space beyond the space of probability measures on path space. One of the main objectives is to embed in this parameter a graph interaction structure among the particles. Particle systems of this kind are often referred to as \emph{non-exchangeable} and their study has recently become an important research topic, owing to the mathematical challenges that this additional structure brings but also to the numerous applications. The prototypical non-exchangeable system, which is a direct extension of the McKean-Vlasov system, is the following
\begin{equation} \label{eq:benchmark}
\d X^{i,N}_t = \frac{1}{N} \sum_{j=1}^N \mathbf{w}^{i,j}_N K ( X^{i,N}_t, X^{j,N}_t ) \d t + \d B^{i}_t, \qquad 1 \leq i \leq N,  
\end{equation}
for a given interaction kernel $K : \R^d \times \R^d \rightarrow \R^d$, 
and where the $( \mathbf{w}_N^{i,j} )_{1 \leq i \leq j \leq N}$ are random weights (typically in $[0,1]$) that keep track of the connections between each pair of particles. These weights are typically interpreted as the components of the adjacency matrix of an underlying graph. Following the terminology of \cite{ayi_largepopulation_2024}, the system \eqref{eq:benchmark} is \emph{non-exchangeable} because the particle $X^{i,N}_t$ interacts with the $X^{j,N}_t$, $1 \leq j \leq N$, through its own \emph{interaction measure} 
\begin{equation} \label{eq:InterMeas}
    \pi^{i,N} := \frac{1}{N} \sum_{j = 1}^N \delta_{(\mathbf{w}^{i,j}_N, X^{j,N})} \in \ps ( [0,1] \times \C_{\R^d} ), 
    \end{equation} 
contrary to the usual setting of mean-field exchangeable systems, where $\pi^{i,N} = \pi ( \vec{X}^N )$ does not depend on the label $i$. Note however that the interaction can still be regarded as of mean-field type because the particle $X^{i,N}_t$ interacts with all the couples $( \mathbf{w}^{i,j}_N, X^{j,N} )$ in the same way through $\pi^{i,N}$.

A simple example for the weights would be $\mathbf{w}^{i,j}_N = W ( i/N, j/N)$, where the map $W : [0,1]^2 \rightarrow [0,1]$ is a (possibly random) graphon. In this case, the (marginal or joint) regularity of $W$ plays a key role to compute the limit of $\pi( \vec{X}^N)$. If $K$ is bounded Lipschitz and $(\xi,\xi') \mapsto W ( \xi, \xi')$ is also jointly Lipschitz-continuous, then~\eqref{eq:benchmark} reduces to the usual framework of mean-field systems with Lipschitz coefficients by considering the augmented particles $Y^{i,N} := (i/N,X^{i,N})$, see for instance \cite{paul_microscopic_2024}. An important challenge is then to leverage the structure of the labeled particle system to avoid the need for regularity with respect to the label input (which should ideally remain an arbitrary choice). In the following, some of our results may require regularity with respect to $\xi$, but we avoid making regularity assumptions with respect to $\xi'$.

A crucial part of the study is to find a proper framework to encode graphs and graph limits. 
In the past few years, several mathematical representations have been introduced. 
As already mentioned, graphons are among the first and most natural ones that have been applied to the mean-field problem in the seminal articles \cite{kaliuzhnyi-verbovetskyi_mean_2018,chiba_mean_2019}. 
Since then, several refinements have been developed, let us for instance mention the important notion of extended graphons \cite{jabin_meanfield_2025} that has been used to treat sparse graphs with only boundedness assumptions -- in particular, no continuity with respect to labels is required.
Graphs can also be represented as operators \cite{gkogkas_graphop_2022} and more generally can be embedded in a measure-theoretic framework with the recent notion of digraph measure \cite{kuehn_vlasov_2022,kuehn_meanfield_2024}. 
Our present work is based on this latter notion that fits well with Tanakas's approach. 
Related to mean-field limits for graphon systems, we also refer to \cite{bayraktar_graphon_2023,crucianelli_interacting_2024,lacker_quantitative_2024,bayraktar_graphon_2025} for further recent results, to \cite{delarue_mean_2017,caines_graphon_2021,carmona_stochastic_2022,lacker_labelstate_2022,cao_probabilistic_2025} in the context of graphon stochastic games and non-exchangeable mean-field control, as well as \cite{bashiri_gradient_2020,bertoli_phase_2025,carrillo_evolution_2025} for long-time asymptotics and gradient flow formulations. 
For more details on applications and comparison of the different mathematical formalisms, as well as many other examples, we refer to the comprehensive review article \cite{ayi_largepopulation_2024} and the references therein. 

Our main contributions are the following. 
\begin{itemize}
    \item We generalize Tanaka's construction and prove well-posedness, mean-field limit and large deviations results under Lipschitz assumptions in an abstract setting.
    
    \item We show that this abstract framework can be specialized to handle network interactions thanks to a careful choice of the parameter space based on the notion of digraph measures. This framework covers both constant and general adaptive (time-evolving) networks, in either deterministic or stochastic settings.
    
    \item For the expository example \eqref{eq:benchmark}, we recover known results for the mean-field limit and prove a new large deviation principle (LDP) for the interaction measure \eqref{eq:InterMeas}. Our assumptions on the graph structure cover the environment noise setting \cite{daipra_mckeanvlasov_1996}, Erd\H{o}s-Rényi graphs similar to \cite{coppini_law_2020,oliveira_interacting_2019} or Lipschitz graphons. Lipschitz-continuity is assumed for the particles but not necessarily for the labels. We further succeeded in obtaining the rate function in relative entropy form as it is the case in known LDPs for the usual empirical measure \cite{daipra_mckeanvlasov_1996,fischer_form_2014}.
    \item We formally characterize the mean-field limit of a class of particle systems with time-evolving weights as a closed system of PDEs. These last results provide a formal answer to some open modeling questions in the literature, but a complete rigorous proof is left for future work. 
\end{itemize}

Since the present framework is concerned with the generality of the graph structure rather than the regularity of coefficients, our results assume Lipschitz-continuity of coefficients with respect to particles -- but not necessarily with respect to the labels --, as it is the easiest way to apply fixed-point results.
Formally, the digraph formalism used in our work could encode almost every type of graph interactions. 
However, our main practical limitation to compute mean-field limits and large deviations is the regularity of the graph, mainly the continuity of the interaction measure $\pi^{i,N}$ given by \eqref{eq:InterMeas} with respect to the particle label $i/N$ -- this excludes non-continuous settings like \cite{jabin_meanfield_2025}.
We will need to assume continuity with respect to the label either for the limit -- as it is case for usual Erd\H{o}s-R\'enyi graphs -- or uniformly at the level of particles using assumptions like \eqref{eq:UnifC} below.
The case of sparse Erd\H{o}s-R\'enyi graphs \cite{delattre_note_2016,oliveira_interacting_2019,medvedev_continuum_2019,coppini_law_2020} is also beyond the present analysis, see Remark~\ref{rem:SparseErdos} below.

Interaction graphs are often used in synchronization models, like the Kuramoto model or for closest-neighbor type interactions -- see e.g. \cite{berglund2007metastability} and the references therein.
However, although they are a natural application case, these models often require a case-by-case analysis as they depend on specific choices of space-rescaling; therefore, precise results for these models are beyond the scope of our generic abstract framework. 

Regarding large deviations, Tanaka's original approach and our present extension of it are restricted to additive noise, contrary to \cite{baldasso_large_2022,gao_large_2024} that cover multiplicative noise for particles on Erd\H{o}s-Rényi graphs.
We refer to \cite{chaintron_quasicontinuity_2024} for a possible way of circumventing this restriction in Tanaka's method.

The present article is organized as follows. In Section~\ref{sec:PartFixed}, we introduce the abstract framework in which particle systems can be realized as the solution of a fixed point equation. We show how it applies to network interactions in Section~\ref{ssec:examples}. The abstract well-posedness, mean-field limit and large deviations results are proved in Section~\ref{sec:Main}. Section~\ref{sec:applications} is devoted to the application of these abstract results to the benchmark example \eqref{eq:benchmark}. Finally we derive a PDE characterization of the mean-field limit of adaptive networks in Section~\ref{sec:pde}. The Appendix~\ref{app:ldp} gathers reminders on large deviation theory and an augmented labeled version of Sanov's theorem is proved in Appendix~\ref{sec:app}.

\section{Particle systems as fixed points} \label{sec:PartFixed}

Our starting point is a fixed-point equation posed in abstract functional spaces (Section~\ref{ssec:abs}). Choosing the appropriate spaces and inputs can lead to various particle systems and in particular to non-exchangeable systems with adaptive weights (Section~\ref{ssec:examples}). The well-posedness of this formulation is postponed to Section~\ref{sec:Main}.

\subsection{Abstract setting} \label{ssec:abs}

Given a vector space $E$ and a time horizon $T >0$, our objective is to construct a parametrized fixed-point map $\mathbb{X}^\alpha : \Omega \to \mathcal{C}_E$ defined by the following equation 
\begin{equation}\label{eq:Xa}\forall \omega\in \Omega, \,\,\forall t\in[0,T],\quad \mathbb{X}^\alpha_t(\omega) = \int_0^t b_s \big( \alpha,\omega,\mathbb{X}^\alpha)\,\dd s + \sigma_t(\omega),
\end{equation}
which depends on the following spaces and objects. 

\begin{itemize}
    \item We require the state space $E$ to be a Banach space $(E, \vert \cdot \vert_E )$, although most examples will stick to the case $E=\R^d$. 
    The space of continuous $E$-valued processes over the fixed time interval $[0,T]$ is denoted by $\mathcal{C}_E$. This space is endowed with the norm $|X |_{\mathcal{C}_E} := \sup_{t\in [0,T]} |X_t|_E$. 
    \item The measurable space $(\Omega,\mathcal{T})$ is the space of inputs. Typical examples are:
    \begin{enumerate}
    \item Driving noise like Brownian motion, in which case $\Omega = \mathcal{C}_E$.
    \item Initial conditions, in which case $\Omega = E$. 
    \item Both at the same time, in which case $\Omega = E \times \mathcal{C}_E$.
    \end{enumerate}
    In the case of non-exchangeable particles, $\Omega$ will also include a set of labels.

    \item The space $\mathcal{A}$ is the space of parameters. We assume that it is a metric space endowed with a distance $d_\mathcal{A}$. We keep it abstract here, as choosing the appropriate space for each example is a key challenge of the present work. 

    \item The measurable map $\sigma : [0,T] \times \Omega \to E$ is used to add stochastic driving terms in the system. 
    Typical examples with the same numbering as above for $\Omega$ are
    \begin{enumerate}
        \item (Brownian motion) $\sigma_t(B) = B_t$.
        \item (Initial condition) $\sigma_t(X_0) = X_0$ for all $t \in [0,T]$.
        \item (Both) $\sigma_t(X_0,B) = X_0 + B_t$.
    \end{enumerate}   

    \item The map $b : [0,T] \times \A \times \Omega \times ( \sigma + \B) \to E$ is a measurable drift term, where $\B = \B (\Omega,\C_E)$ denotes the Banach space of bounded measurable maps $\Omega \rightarrow \C_E$ endowed with the supremum norm and $\sigma+\B$ is the affine space of functions $\omega \mapsto \sigma(\omega) + X(\omega)$ where $X\in\B$. The function $b$ is assumed to be \emph{non-anticipative} in the sense that
    \[b_t (\alpha,\omega, \mathbb{X}) = b_t(\alpha,\omega,\mathbb{X}_{\wedge t}), \]
    where $\mathbb{X}_{\wedge t} := ( \mathbb{X}_{s \wedge t} )_{0 \leq s \leq T}$ is the stopped path at $t$.
\end{itemize}

As a first trivial example, we can consider the single parametrized SDE
\[ \A = \emptyset, \qquad \Omega = [0,1] \times \C_E, \]
together with
\[ b_t (\alpha,\omega,X) = b (X_t (\omega)), \qquad \sigma_t ( \omega ) = \sqrt{\varepsilon} \gamma_t, \]
for a given $\omega = (\varepsilon, \gamma)\in\Omega$ and a drift term $b : E \rightarrow E$.
The fixed-point Equation~\eqref{eq:Xa} builds the flow map of the ODE
\[ X_t = \int_0^t b ( X_s ) \d s + \sqrt{\varepsilon} \gamma_t.\]
When $\gamma$ is the realization of a Brownian motion, this corresponds to the typical dynamics studied by Freidlin and Wentzell. The classical Freidlin-Wentzell theory then focuses on $\varepsilon$ small, whereas in this article we start from a similar fixed-point construction but applied to particle systems with a focus on when the number of particles is large.

\subsection{Particle systems} \label{ssec:examples}

Once the map $\mathbb{X}^\alpha$ is defined in the abstract setting, a particle system can be constructed as follows. Given sequences $(\alpha_N)_{N \geq 1} \subset \mathcal{A}$ and $(\omega^{i,N})_{1 \leq i \leq N} \subset \Omega$, we define the particles
\[ X^{i,N} := \mathbb{X}^{\alpha_N}(\omega^{i,N}), \quad 1 \leq i \leq N, \]
each particle then satisfies
\begin{equation} \label{eq:abstractparticles}
\forall t \in [0,T], \qquad X^{i,N}_t = \int_0^t b_s( \alpha_N, \omega^{i,N}, \mathbb{X}^{\alpha_N} ) \dd s + \sigma_t(\omega^{i,N}).
\end{equation}
In the following, the particle system will be denoted by $\vec{X}^N := ( X^{i,N} )_{1 \leq i \leq N}$.
We shall also write
\[ \pi ( \vec{X}^N ) := \frac{1}{N} \sum_{i=1}^N \delta_{X^{i,N}}, \]
for the empirical measure of the $N$-tuple $\vec{X}^N$. We recall that this empirical measure can be re-written in terms of the inputs, parameters and Tanaka's map as 
\[\pi(\vec{X}^N) = \mathbb{X}^{\alpha_N}_\#\pi(\vec{\omega}^N)\]
where $\pi(\vec{\omega}^N) = \frac{1}{N}\sum_{i=1}^N \delta_{\omega^{i,N}}$. With this construction, each particle is fully characterized by its input data in $\Omega$ while the parameter in $\A$ will be used to define the interactions. As an example to keep in mind, the general McKean-Vlasov particle system 
\[\d X^i_t = \bar{b}_t\big(X^i_t,\pi(\vec{X}^N)\big)\d t + \d B^i_t,\]
with a non-anticipative drift $\bar{b}:[0,T]\times E\times\pb(\mathcal{C}_E)\to E$, independent Brownian motions $B^i\in \mathcal{C}_E$ and initial values $X^i_0\in E$, is a direct extension of Tanaka's seminal construction outlined in the introduction, corresponding here to \eqref{eq:abstractparticles} with 
\[\omega^{i,N}=(X^i_0,B^i)\in \Omega := E\times \mathcal{C}_E,\quad \alpha_N = \frac{1}{N}\sum_{i=1}^N \delta_{\omega^{i,N}}\in \mathcal{A}:=\pb(\Omega),\]
and the drift and diffusion functions
\[b_t(\alpha,\omega,\mathbb{X}) = \bar{b}(\mathbb{X}_t(\omega),\mathbb{X}_\#\alpha),\quad \sigma_t(x,\gamma) = x + \gamma_t.\]
This simple case essentially relies on the fact that the empirical measure of the particle system rules all the interactions and can be simply written as a push-forward of the empirical measure of the given input data. In order to treat non-exchangeable systems of the form \eqref{eq:benchmark}, this strategy needs to be refined to incorporate the two main challenges: each particle has its own interaction measure and this measure depends on a set of weights, given independently and possibly evolving in time. To this end, we propose the following fairly general framework.

In order to encode the individual interaction measures, since one input data in $\Omega$ defines one particle, it is natural to take as the parameter space $\A$ a space of measurable maps on $\Omega$. This map takes its values in the space of probability measures $\pb(\Omega\times\W')$ that enriches the previous space $\pb(\Omega)$ by introducing a new space $\mathcal{W}'$ of \emph{input weights}, typically $\mathcal{W}'=[0,1]$. We can then define the drift term that contains the interactions, here we take it of the form
\begin{equation} \label{eq:GeneralGraph}
\A = \B ( \Omega, \ps ( \W' \times \Omega ) ), \qquad b_t ( \alpha, \omega, \mathbb{X} ) = \overline{b}_t ( \mathbb{X} ( \omega  ), \pi_t ( \alpha, \omega , \mathbb{X} ) ), 
\end{equation} 
where the \emph{interaction measure} associated to an input data $\omega\in\Omega$ is defined by
\[ \pi_t ( \alpha, \omega , \mathbb{X} ) := (W_t (\alpha, \omega, \mathbb{X} ), \mathbb{X} )_\#\alpha(\omega) \in \ps ( \W \times \C_E ).\]
In this definition, we introduce the map $W$ and the \emph{space of weights} $\mathcal{W}$ to model the time evolution of the input weights: for a given triplet $(\alpha,\omega,\mathbb{X})$, the map
\[ W (\alpha, \omega, \mathbb{X}) : \W' \times \Omega \rightarrow \C_\W, \]
takes the input weights, initially in the space $\mathcal{W}'$ and an input data in $\Omega$ and outputs the time-evolution process of the weights that take values in the space $\mathcal{W}$ (possibly different than $\mathcal{W}'$). We use the notational convention 
\[(W_t (\alpha, \omega, \mathbb{X} ), \mathbb{X}) : (\mathbf{w}',\omega')\in\mathcal{W}'\times \Omega \mapsto (W_t (\alpha, \omega, \mathbb{X} )(\mathbf{w}',\omega'),\mathbb{X} (\omega'))\in\mathcal{W}\times\Omega.\]
In this model, the drift term in \eqref{eq:GeneralGraph} is of the form $\bar{b}_t : \C_E \times \pb(\W\times \C_E)\to E$. To ease the presentation, in the following we will mainly focus on binary interactions and scalar weights (see Remark~\ref{rem:general} regarding other models), corresponding to
\begin{equation}\label{eq:binaryscalar}\bar{b}_t:(X,\pi_t)\in\mathcal{C}_E\times {\mathcal{P}(\mathcal{W}\times \mathcal{C}_E)} \mapsto \int_{\mathcal{W}\times \mathcal{C}_E} \mathbf{w} K_t(X,Y) \pi_t(\d \mathbf{w},\d Y),\end{equation}
for a non-anticipative kernel $K:[0,T]\times \mathcal{C}_E\times \mathcal{C}_E \to E$ and where 
\[\mathcal{W}' = [0,1],\quad \mathcal{W}=\R.\]
With this choice, the fixed-point Equation~\eqref{eq:Xa} becomes 
\[\mathbb{X}^\alpha_t(\omega) = \int_0^t\int_{\W'\times\C_E} W_t(\alpha,\omega,\mathbb{X}^\alpha)(\mathbf{w}',\omega') K_t\big(\mathbb{X}^\alpha(\omega),\mathbb{X}^\alpha(\omega')\big)\alpha(\omega)(\d \mathbf{w}',\d\omega') + \sigma_t(\omega).\]

To see that this abstract framework includes the benchmark example, we are left to specify the $\omega^{i,N}\in\Omega$ and the sequence of parameters $\alpha_N\in\A$. We use a digraph measure formalism originally due to \cite{kuehn_vlasov_2022} and adapted to our framework. Here, each particle is determined, not only by its initial state and driving noise but also by its label, which is taken by convention in the unit interval $I=[0,1]$. Thus $\Omega=I\times E\times\C_E$ and we take as inputs
\[\omega^{i,N} = (i/N,X^i_0,B^i)\in\Omega,\quad i\in\{1,\ldots,N\}.\]
For this model, we always consider the driving term to be 
\[\sigma_t(\xi,x,B) := x + B_t.\]
We then introduce the following digraph measure on $\Omega$, for any $\omega = (\xi,x,B)\in\Omega$,
\[\alpha_N(\omega)(\dd \mathbf{w}', \dd\omega') := \frac{1}{N} \sum_{j=1}^N \delta_{( \mathbf{w}^{[\xi]_N,j}_N , \omega^{j,N} ) }(\d \mathbf{w}', \d \omega') \in\pb(\W'\times \Omega), \]
which depends on a $N\times N$ matrix of initial weights $(\mathbf{w}^{i,j}_N)_{i,j}$ and where $[\xi]_N$ denotes the integer $i$ such that $\xi \in [i/N,(i+1)/N)$. This is an extension to our framework of the digraph measure $\eta : \xi \in [0,1] \mapsto \eta^{\xi}(\dd \zeta)\in\mathcal{M}_+([0,1])$ introduced in \cite{kuehn_vlasov_2022} and defined by 
    \[\eta^\xi(\dd\zeta) := \frac{1}{N} \sum_{j=1}^N \mathbf{w}^{[\xi]_N,j}_N \delta_{\frac{j}{N}}(\dd\zeta).\]
At this point, the choice $W_t (\alpha,\omega,\mathbb{X})(\mathbf{w}',\omega')=\mathbf{w}'$ readily leads to \eqref{eq:benchmark}. We refine here this version by adding a time-evolution of the weights. Taking any sufficiently regular function $\phi:E\times E\times \R\to\R$, let us define 
\[W_t(\alpha,\omega,\mathbb{X})(\mathbf{w}',\omega') \equiv W_t(\mathbb{X}(\omega),\mathbb{X}(\omega'),\mathbf{w}') := \mathbf{w}_t,\]
where $\mathbf{w}_t\in\R$ is the solution at time $t$ of the ODE 
\begin{equation}\label{eq:odew}\frac{\d}{\d t}\mathbf{w}_t = \phi(\mathbb{X}_t(\omega),\mathbb{X}_t(\omega'),\mathbf{w}_t), \quad \mathbf{w}_0 = \mathbf{w}'.\end{equation}
Reporting everything in \eqref{eq:abstractparticles}, we then obtain the following particle system that will be the main object of Section~\ref{sec:applications}.
\begin{align}\label{eq:particleweights}
    &\d X^i_t = \frac{1}{N}\sum_{j=1}^N W_t(X^i,X^j,\mathbf{w}^{ij}_N)K_t(X^i,X^j)\d t + \d B^i_t.
\end{align}

\begin{remark}\label{rem:general}
Most of the abstract results presented below could be applied to more general interaction functions. Typically, one could also handle a nonlinear dependence on $\pi$, typically
    \[ \overline{b}_t: (X,\pi) \mapsto \frac{\int_{[0,1] \times \mathcal{C}_E} \mathbf{w} K_t ( X, Y ) \pi (\dd \mathbf{w}, \d Y)}{\int_{[0,1] \times \C_E} \mathbf{w} \pi (\d \mathbf{w}, \d Y)}, \]
    corresponding to the interaction
    \[ b_t(\alpha_N,\omega^{i,N}, \mathbb{X}^{\alpha_N}) = \frac{\sum_{j=1}^N W_t(X^i,X^j,\mathbf{w}^{ij}_N) K_t(X^{i},X^{j})}{\sum_{j=1}^N W_t(X^i,X^j,\mathbf{w}^{ij}_N)},\]
    where the force is normalized on each sub-network analogously to the Motsch-Tadmor model \cite{motsch_new_2011}. Furthermore, the general weight function $W$ or the function $\phi$ in \eqref{eq:odew} may explicitly depend on the variables $\omega'$ and $\alpha$ (not only through the function $\mathbb{X}$). One may consider the latter case to treat more challenging models where the evolution of the weights also depends on mean-field interactions. It would also be easy to add an environmental noise \cite{daipra_mckeanvlasov_1996,coghi_pathwise_2020,baldasso_large_2022} by enriching the input space $\Omega$ with an additional component. 
\end{remark}

\section{Well-posedness and abstract mean-field limit} \label{sec:Main}

The following result constructs the solution $\mathbb{X}^\alpha$ of \eqref{eq:Xa} in the affine space $\sigma + \B$, recalling that $\B = \B (\Omega,\C_E)$ is the Banach space of bounded measurable maps $\Omega \rightarrow \C_E$ endowed with the supremum norm.
The proof can be seen as a variation on \cite[Proposition 6]{coghi_pathwise_2020}.

\begin{theorem}[Well-posedness and regularity]\label{thm:wellposedness}
Assume that the function $b$ is globally bounded in all its arguments.
Given $\alpha\in\A$, if there exists $L_b^\B = L^\B_b (\alpha) >0$ such that for all $(t,\omega) \in [0,T] \times \Omega$,
\begin{equation}\label{eq:LipbX}
\forall (\mathbb{X},\mathbb{Y})\in (\sigma +\B )^2,\quad  \vert b_t (\alpha,\omega,\mathbb{X})- b_t (\alpha,\omega,\mathbb{Y}) \vert \leq L_b^\B \vert \mathbb{X}_{\wedge t} - \mathbb{Y}_{\wedge t} \vert_\B,
\end{equation}
then \eqref{eq:Xa} has a unique solution $\mathbb{X}^\alpha$ in $\sigma + \B$. Moreover, $\mathbb{X}^\alpha$ enjoys the following regularity properties.

\begin{enumerate}
\item \textbf{(Lipschitz in $\Omega$).} 
Fix $\alpha \in \A$ and assume that $(\Omega,d_\Omega)$ is a metric space. If there exist $L^\Omega_b = L^\Omega_b (\alpha) > 0$, $L^\Omega_\sigma > 0$ such that for every $(t,\omega,\omega') \in [0,T] \times \Omega^2$,
\begin{subequations}\label{eq:LipbOmega}
\begin{align}
\vert b_t(\alpha,\omega,\mathbb{X}^\alpha) - b_t (\alpha,\omega',\mathbb{X}^\alpha) \vert_E &\leq L^\Omega_b \big[ \vert \mathbb{X}^\alpha_{\wedge t} ( \omega ) -  \mathbb{X}^\alpha_{\wedge t} ( \omega' ) \vert_{\C_E} + d_\Omega(\omega,\omega') \big], \\
\vert \sigma_{t} ( \omega ) - \sigma_{t} ( \omega' ) \vert_E &\leq L^\Omega_\sigma d_\Omega ( \omega, \omega'),
\end{align}
\end{subequations}
then the map $\mathbb{X}^\alpha :\Omega\to \mathcal{C}_E$ is Lipschitz-continuous with Lipschitz constant 
\[L_\mathbb{X}^\Omega :=(L_\sigma^\Omega + L^\Omega_b T ) \e^{L^\Omega_b T}.\]

\item \textbf{(Lipschitz in $\A$).} Given $\alpha,\beta\in\A$, if $L_\phi^\A = L_\phi^\A (\alpha,\beta) >0$ exists such that
\begin{equation}\label{eq:LipbA} 
\forall \omega \in \Omega, \quad
\vert b_t (\alpha,\omega,\mathbb{X}^\alpha) - b_t (\beta,\omega,\mathbb{X}^\beta ) \vert_E \leq L_b^\mathcal{A} \big[ \vert \mathbb{X}^\alpha_{\wedge t} - \mathbb{X}^\beta_{\wedge t} \vert_{\B} + d_\mathcal{A}(\alpha,\beta) \big],
\end{equation}
then, defining $L_\mathbb{X}^\mathcal{A} := L^\A_b T e^{L^\A_b T}$,
\begin{equation}\label{eq:LipXA}|\mathbb{X}^\alpha - \mathbb{X}^\beta|_\B \leq L_\mathbb{X}^\mathcal{A} d_\A(\alpha,\beta). \end{equation}
In particular, if \eqref{eq:LipbA} holds uniformly for every $\alpha,\beta\in\A$, then $\alpha \mapsto \mathbb{X}^\alpha$ is Lipschitz-continuous.
\end{enumerate}
    
\end{theorem}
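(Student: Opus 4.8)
The plan is to run a Banach fixed-point / Picard iteration argument in the affine space $\sigma + \B$, then extract the three regularity statements by contraction estimates with respect to the relevant variable, all organized around a common Grönwall-type lemma applied on the stopped paths.

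\emph{Existence and uniqueness.} First I would fix $\alpha \in \A$ and define the map $\Phi : \sigma + \B \to \sigma + \B$ by $(\Phi \mathbb{X})_t(\omega) := \int_0^t b_s(\alpha,\omega,\mathbb{X})\,\d s + \sigma_t(\omega)$. This is well-defined: boundedness of $b$ ensures $\Phi\mathbb{X} - \sigma \in \B$, and non-anticipativity together with measurability of $b$ and $\sigma$ gives measurability in $\omega$. To get a contraction I would not work with the plain $\B$-norm but introduce the family of stopped seminorms $\vert \mathbb{X}\vert_{\B,t} := \sup_{\omega}\vert \mathbb{X}_{\wedge t}(\omega)\vert_{\C_E}$ and show, using \eqref{eq:LipbX} and the non-anticipative property, that
\begin{equation*}
\vert \Phi\mathbb{X} - \Phi\mathbb{Y}\vert_{\B,t} \leq L_b^\B \int_0^t \vert \mathbb{X} - \mathbb{Y}\vert_{\B,s}\,\d s.
\end{equation*}
Iterating this bound $n$ times yields $\vert \Phi^n\mathbb{X} - \Phi^n\mathbb{Y}\vert_\B \leq \frac{(L_b^\B T)^n}{n!}\vert \mathbb{X} - \mathbb{Y}\vert_\B$, so some iterate of $\Phi$ is a strict contraction on the complete metric space $\sigma + \B$; the Banach fixed-point theorem then gives the unique solution $\mathbb{X}^\alpha$.

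\emph{Regularity in $\Omega$.} With $\alpha$ fixed, set $\Delta_t := \vert \mathbb{X}^\alpha_{\wedge t}(\omega) - \mathbb{X}^\alpha_{\wedge t}(\omega')\vert_{\C_E}$. Plugging the two inputs into \eqref{eq:Xa}, subtracting, and using \eqref{eq:LipbOmega} gives $\Delta_t \leq L^\Omega_\sigma d_\Omega(\omega,\omega') + L^\Omega_b \int_0^t \big(\Delta_s + d_\Omega(\omega,\omega')\big)\,\d s \leq \big(L^\Omega_\sigma + L^\Omega_b T\big)d_\Omega(\omega,\omega') + L^\Omega_b\int_0^t \Delta_s\,\d s$; Grönwall's lemma then produces the stated constant $L_\mathbb{X}^\Omega = (L^\Omega_\sigma + L^\Omega_b T)\e^{L^\Omega_b T}$ at $t = T$. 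The estimate in $\A$ is entirely parallel: writing $\delta_t := \vert \mathbb{X}^\alpha_{\wedge t} - \mathbb{X}^\beta_{\wedge t}\vert_\B$, subtracting the fixed-point equations for $\mathbb{X}^\alpha$ and $\mathbb{X}^\beta$ (the $\sigma$-terms cancel since they do not depend on the parameter) and invoking \eqref{eq:LipbA} gives $\delta_t \leq L_b^\A\int_0^t\big(\delta_s + d_\A(\alpha,\beta)\big)\,\d s$, and Grönwall yields \eqref{eq:LipXA} with $L_\mathbb{X}^\A = L_b^\A T\,\e^{L_b^\A T}$; the final Lipschitz statement is then immediate if the constant is uniform.

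\emph{Main obstacle.} The analytically substantive point is not Grönwall — that is routine — but the bookkeeping around the stopped paths: one must be careful that \eqref{eq:LipbX}, \eqref{eq:LipbOmega}, \eqref{eq:LipbA} are all phrased in terms of $\mathbb{X}_{\wedge t}$ and that the non-anticipative property $b_t(\alpha,\omega,\mathbb{X}) = b_t(\alpha,\omega,\mathbb{X}_{\wedge t})$ is used at each step so that the time integral up to $t$ only ever sees the path up to $t$; this is exactly what makes the Grönwall loop close. A secondary point worth checking is that $\sigma + \B$ is genuinely complete and $\Phi$ maps it into itself, which relies on the global boundedness of $b$ assumed in the statement. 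No regularity of $b$ in $t$ beyond measurability is needed, since everything is integrated in time.
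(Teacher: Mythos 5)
Your proposal is correct and follows essentially the same route as the paper: the same Picard map $\Phi$ on $\sigma+\B$, the same bootstrapping inequality on stopped paths leading to a contraction for an iterate of $\Phi$, and the same Grönwall estimates for the two Lipschitz properties. The extra details you supply (the explicit $(L_b^\B T)^n/n!$ bound and the well-definedness check for $\Phi$) are routine elaborations of what the paper leaves implicit.
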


\begin{proof}
For $\alpha \in \A$, we define a map $\Phi : \sigma + \B \rightarrow \sigma + \B$ by 
\[ \Phi(\mathbb{X})(t,\omega) := \int_0^t b_s(\alpha,\omega,\mathbb{X}) \dd s + \sigma_t(\omega).\]
From \eqref{eq:LipbX}, we get the bootstrapping relation
\[ \forall t \in [0,T], \quad \vert \Phi_{\wedge t} ( \mathbb{X} ) - \Phi_{\wedge t}  ( \mathbb{Y} ) \vert_\B \leq L^\B_b \int_0^t \vert \mathbb{X}_{\wedge s} -\mathbb{Y}_{\wedge s} \vert_\B \d s, \]
classically implying that an iterate of $\Phi$ is a contraction.
Existence and uniqueness for \eqref{eq:Xa} then stem from the Banach fixed-point theorem.
The regularity properties are then direct consequences of Gronwall estimates:
\begin{enumerate}
\item From \eqref{eq:LipbOmega}, for every $t \in [0,T]$,
\[ \vert \mathbb{X}^\alpha_{\wedge t} (\omega) - \mathbb{X}^\alpha_{\wedge t} (\omega') \vert_{\mathcal{C}_E} \leq L^\Omega_b \int_0^t \big[ |\mathbb{X}^\alpha_{\wedge s} (\omega) - \mathbb{X}^\alpha_{\wedge s}(\omega') |_{\mathcal{C}_E} + d_\Omega(\omega,\omega') \big] \dd s + L_\sigma^\Omega d_\Omega(\omega,\omega'). \]
Gronwall's lemma leads to 
\[ \vert \mathbb{X}^\alpha (\omega) - \mathbb{X}^\alpha (\omega') \vert_{\mathcal{C}_E} \leq (L_\sigma^\Omega + L^\Omega_b T )\e^{L^\Omega_b T}d_\Omega(\omega,\omega'). \]
\item For $\alpha,\beta\in\A$, \eqref{eq:LipbA} entails 
\[ \vert \mathbb{X}^\alpha_{\wedge t} - \mathbb{X}^\beta_{\wedge t} \vert_\B \leq L_b^\A \int_0^t \big[ \vert \mathbb{X}^\alpha_{\wedge s} - \mathbb{X}^\beta_{\wedge s} \vert_\B + d_\mathcal{A}(\alpha,\beta) \big] \d s. \]
Once again, the conclusion follows by Gronwall's lemma. 
\end{enumerate}

\end{proof}

The mean-field limit and the associated large deviation principles are direct corollaries of Theorem~\ref{thm:wellposedness}. As a general principle, the Lipschitz estimates indeed ensure that any convergence result that holds in $\Omega$ and $\mathcal{A}$ will propagate to the particle system. 

\begin{corollary}[Mean-field limit]\label{thm:abstract_meanfield}
Let us consider sequences $( \alpha_N )_{N \ge 1} \subset \A$ and $( \omega^{i,N} )_{1 \leq i \leq N} \subset \Omega$. 
We assume that $\alpha$ and the $\alpha_N$ satisfy \eqref{eq:LipbX}.
Let $X^{i,N} := \mathbb{X}^{\alpha_N}(\omega^{i,N})$, $1 \leq i \leq N$, be the particle system satisfying \eqref{eq:abstractparticles}. 
We eventually assume that \eqref{eq:LipbA} holds for $(\alpha,\beta) = (\alpha,\alpha_N)$ uniformly in $N \geq 1$.

\begin{enumerate} 
\item The Lipschitz estimate \eqref{eq:LipXA} implies
\[ \max_{1 \leq i \leq N} \vert X^{i,N} - \mathbb{X}^{\alpha} ( \omega^{i,N}) \vert_{\C_E} \leq L^\A_{\mathbb{X}} \d_\A ( \alpha_N, \alpha).\]
In particular, if the inputs $\omega^{i,N} \equiv \omega^i$ are independent of $N$ and  $\alpha_N \rightarrow \alpha$ in $\A$, then it implies the pathwise convergence of the particles in $\C_E$
\[X^{i,N} \underset{N\to+\infty}{\longrightarrow} \mathbb{X}^\alpha(\omega^i).\]
\item If $\alpha$ satisfies \eqref{eq:LipbOmega}, then for any $P\in\pb(\Omega)$, it holds that
\[ W_1 ( \pi ( \vec{X}^N ), \mathbb{X}^\alpha_\#P ) \leq L^\A_{\mathbb{X}} d_{\A}(\alpha_N,\alpha) + L^\Omega_{\mathbb{X}} W_1 ( \pi( \vec{\omega}^N ),P).\]
This estimate implies the mean-field limit of the particle system towards $\mathbb{X}^\alpha_\#P$ when $\alpha_N\to \alpha$ in $\mathcal{A}$ and $ \pi( \vec{\omega}^N ) \to P$ in $\ps ( \Omega )$ for the Wasserstein-1 distance. 
\end{enumerate}
\end{corollary}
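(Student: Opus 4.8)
The statement is Corollary~\ref{thm:abstract_meanfield}, and it is advertised as a direct consequence of Theorem~\ref{thm:wellposedness}, so the plan is essentially bookkeeping: feed the right objects into the Lipschitz estimates \eqref{eq:LipXA} and part~(1) of the theorem, then convert pointwise bounds into Wasserstein bounds.

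For part~(1), I would argue as follows. For each fixed $i$, both $X^{i,N} = \mathbb{X}^{\alpha_N}(\omega^{i,N})$ and $\mathbb{X}^{\alpha}(\omega^{i,N})$ are evaluations of the fixed-point maps $\mathbb{X}^{\alpha_N}$ and $\mathbb{X}^{\alpha}$ at the \emph{same} input $\omega^{i,N}$. The hypothesis that $\alpha$ and all $\alpha_N$ satisfy \eqref{eq:LipbX} guarantees these maps are well-defined by Theorem~\ref{thm:wellposedness}, and the uniform-in-$N$ validity of \eqref{eq:LipbA} for the pairs $(\alpha,\alpha_N)$ gives \eqref{eq:LipXA} with a constant $L^\A_{\mathbb{X}}$ that does not depend on $N$. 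Since $|\mathbb{X}^{\alpha_N} - \mathbb{X}^\alpha|_\B = \sup_{\omega \in \Omega} |\mathbb{X}^{\alpha_N}(\omega) - \mathbb{X}^\alpha(\omega)|_{\C_E}$, taking the supremum only over the finite set $\{\omega^{i,N}\}_{1 \le i \le N}$ gives the claimed bound $\max_i |X^{i,N} - \mathbb{X}^\alpha(\omega^{i,N})|_{\C_E} \le L^\A_{\mathbb{X}} d_\A(\alpha_N,\alpha)$. If moreover $\omega^{i,N} \equiv \omega^i$ and $\alpha_N \to \alpha$, the right side tends to $0$, yielding $X^{i,N} \to \mathbb{X}^\alpha(\omega^i)$ in $\C_E$.

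For part~(2), I would bound $W_1(\pi(\vec X^N), \mathbb{X}^\alpha_\# P)$ by the triangle inequality through the intermediate measure $\mathbb{X}^\alpha_\# \pi(\vec\omega^N)$. The first piece, $W_1(\pi(\vec X^N), \mathbb{X}^\alpha_\#\pi(\vec\omega^N))$: since $\pi(\vec X^N) = \mathbb{X}^{\alpha_N}_\#\pi(\vec\omega^N)$ (recalled in the excerpt) and $\mathbb{X}^\alpha_\#\pi(\vec\omega^N)$ share the same "source" measure $\pi(\vec\omega^N) = \frac1N\sum\delta_{\omega^{i,N}}$, the coupling that sends $\omega^{i,N}$-atom to $\omega^{i,N}$-atom gives $W_1 \le \frac1N\sum_i |\mathbb{X}^{\alpha_N}(\omega^{i,N}) - \mathbb{X}^\alpha(\omega^{i,N})|_{\C_E} \le \max_i |\cdots| \le L^\A_{\mathbb{X}} d_\A(\alpha_N,\alpha)$ by part~(1). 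The second piece, $W_1(\mathbb{X}^\alpha_\#\pi(\vec\omega^N), \mathbb{X}^\alpha_\#P)$: since $\alpha$ satisfies \eqref{eq:LipbOmega}, Theorem~\ref{thm:wellposedness}(1) makes $\mathbb{X}^\alpha : \Omega \to \C_E$ Lipschitz with constant $L^\Omega_{\mathbb{X}}$; pushing an optimal coupling of $\pi(\vec\omega^N)$ and $P$ forward through $\mathbb{X}^\alpha$ yields $W_1(\mathbb{X}^\alpha_\#\pi(\vec\omega^N), \mathbb{X}^\alpha_\#P) \le L^\Omega_{\mathbb{X}} W_1(\pi(\vec\omega^N), P)$ — the standard fact that a pushforward by an $L$-Lipschitz map contracts $W_1$ by at most $L$. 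Summing the two pieces gives the stated inequality, and letting $\alpha_N \to \alpha$, $\pi(\vec\omega^N) \to P$ forces both terms to $0$, which is the mean-field limit.

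The only mild subtlety — and the thing I would state carefully rather than the "hard part" — is the uniformity of the constant $L^\A_{\mathbb{X}}$ across all the pairs $(\alpha,\alpha_N)$: this is exactly what the hypothesis "\eqref{eq:LipbA} holds for $(\alpha,\beta)=(\alpha,\alpha_N)$ uniformly in $N$" provides, so it enters as an assumption rather than something to be proved. Everything else is the elementary Wasserstein-contraction lemma for pushforwards plus the triangle inequality; there is no genuine obstacle, which is consistent with the corollary being billed as immediate from Theorem~\ref{thm:wellposedness}.
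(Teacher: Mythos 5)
Your proof is correct and follows essentially the same route as the paper: part (1) is read off from the uniform estimate \eqref{eq:LipXA}, and part (2) uses the triangle inequality through $\mathbb{X}^\alpha_\#\pi(\vec{\omega}^N)$, the atom-to-atom coupling for the first term, and the Lipschitz push-forward contraction of $W_1$ for the second. No discrepancies to report.
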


\begin{proof}
The first point is a direct application of Theorem~\ref{thm:wellposedness}.
For the second one, we decompose
\[ W_1 ( \pi ( \vec{X}^N ), \mathbb{X}^\alpha_\#P ) \leq W_1 ( \pi ( \vec{X}^N ), \mathbb{X}^\alpha_\# \pi( \vec{\omega}^N ) ) + W_1 ( \mathbb{X}^\alpha_\# \pi( \vec{\omega}^N ), \mathbb{X}^\alpha_\#P ). \]
For the first term, we use that $\pi ( \vec{X}^N ) = \mathbb{X}^{\alpha_N}_\# \pi ( \vec{\omega}^N )$ by definition of $( X^{i,N} )_{1 \leq i \leq N}$. 
The bound \eqref{eq:LipXA} then yields
\[  W_1 ( \pi ( \vec{X}^N ), \mathbb{X}^\alpha_\# \pi( \vec{\omega}^N ) ) \leq N^{-1} \sum_{i=1}^N \vert \mathbb{X}^{\alpha_N} ( \omega^{i,N} ) - \mathbb{X}^{\alpha} ( \omega^{i,N} ) \vert_{\C_E} \leq L^\A_{\mathbb{X}} d_{\A}(\alpha_N,\alpha). \]
For the second term, we leverage the stability of $W_1$ under Lipschitz push-forward:
\[ W_1 ( \mathbb{X}^\alpha_\# \pi( \vec{\omega}^N ), \mathbb{X}^\alpha_\#P ) \leq \Vert \mathbb{X}^\alpha \Vert_{\mathrm{Lip}} W_1 ( \pi( \vec{\omega}^N ),P), \]
combined with Theorem~\ref{thm:wellposedness}.
\end{proof}

The first point is analogous to the renowned McKean theorem classically obtained for \eqref{eq:mckeanvlasov} by a coupling method, see e.g. \cite[Theorem 3.1]{chaintron_propagation_2022a}. 
We then leverage the construction $\pi( \vec{X}^N ) = \mathbb{X}^{\alpha_N}_\# \pi( \vec{\omega}^N )$ to prove large deviation results. The following corollary extends large deviation results known for classical McKean-Vlasov systems (see Remark~\ref{rem:ldpclassicalmckean}).

\begin{corollary}[Large deviations] \label{cor:LDP}
Let us assume that:
\begin{enumerate}
    \item Properties \eqref{eq:LipbOmega}-\eqref{eq:LipbA} hold uniformly for every $\alpha, \beta \in \A$.
    \item The $(\alpha_N )_{N \geq 1}$, $( \omega^{i,N} )_{1 \leq i \leq N}$ are random variables such that $( \mathrm{Law}(\alpha_N,\pi( \vec{\omega}^N )))_{N \geq 1}$  satisfies the large deviation principle (LDP) with good rate function $I : \A \times \mathcal{P} ( \Omega ) \rightarrow [0,+\infty]$.
\end{enumerate}
Then, $( \mathrm{Law}(\pi( \vec{X}^N)))_{N \geq 1}$ satisfies the LDP with good rate function
\[ J : \mu \in \mathcal{P} ( \C_E ) \mapsto \inf_{\substack{(\alpha,\nu) \\ \mathbb{X}^\alpha_\# \nu = \mu}} I ( \alpha, \nu). \]   
\end{corollary}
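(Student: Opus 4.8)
The plan is to derive the LDP for $(\mathrm{Law}(\pi(\vec X^N)))_{N\ge1}$ from the assumed LDP for $(\mathrm{Law}(\alpha_N,\pi(\vec\omega^N)))_{N\ge1}$ via the contraction principle. The key observation, already highlighted in the excerpt, is the identity $\pi(\vec X^N) = \mathbb{X}^{\alpha_N}_\#\pi(\vec\omega^N)$, so $\pi(\vec X^N)$ is the image of $(\alpha_N,\pi(\vec\omega^N))$ under the single deterministic map
\[ F : \A\times\pb(\Omega)\to\pb(\C_E),\qquad F(\alpha,\nu) := \mathbb{X}^\alpha_\#\nu. \]
Thus it suffices to (i) check that $F$ is continuous, (ii) apply the contraction principle to transport the good rate function $I$ along $F$, and (iii) identify the resulting rate function as the stated $J$.

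First I would verify continuity of $F$. Equip $\A\times\pb(\Omega)$ with a product topology where $\pb(\Omega)$ carries the Wasserstein-1 topology (or weak topology — one should state precisely which, and it must match the topology in which the input LDP is assumed; I would use $W_1$ so that the estimates of Corollary~\ref{thm:abstract_meanfield} apply directly). Given $(\alpha_n,\nu_n)\to(\alpha,\nu)$, write
\[ W_1\big(\mathbb{X}^{\alpha_n}_\#\nu_n,\ \mathbb{X}^\alpha_\#\nu\big) \le W_1\big(\mathbb{X}^{\alpha_n}_\#\nu_n,\ \mathbb{X}^\alpha_\#\nu_n\big) + W_1\big(\mathbb{X}^\alpha_\#\nu_n,\ \mathbb{X}^\alpha_\#\nu\big). \]
The first term is bounded by $\int_\Omega |\mathbb{X}^{\alpha_n}(\omega)-\mathbb{X}^\alpha(\omega)|_{\C_E}\,\nu_n(\d\omega) \le |\mathbb{X}^{\alpha_n}-\mathbb{X}^\alpha|_\B \le L^\A_\mathbb{X}\, d_\A(\alpha_n,\alpha)$ by the uniform Lipschitz-in-$\A$ estimate \eqref{eq:LipXA} (available since assumption~1 gives \eqref{eq:LipbA} uniformly), and hence tends to $0$. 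The second term is bounded by $\|\mathbb{X}^\alpha\|_{\mathrm{Lip}}\,W_1(\nu_n,\nu)$ by stability of $W_1$ under Lipschitz push-forward, using that $\mathbb{X}^\alpha$ is Lipschitz on $\Omega$ with constant $L^\Omega_\mathbb{X}$ by the uniform version of \eqref{eq:LipbOmega} in assumption~1; this also tends to $0$. So $F$ is continuous. (One small technical point to address: one should confirm $F$ is well-defined into the same space $\pb(\C_E)$ on which the target LDP is stated, i.e. that if $\nu$ has suitable moments then so does $\mathbb{X}^\alpha_\#\nu$; this follows from $\mathbb{X}^\alpha = \sigma + (\text{bounded})$ together with the sublinear growth of $\sigma$, or one simply works with the weak topology throughout.)

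Then the contraction principle (recalled in Appendix~\ref{app:ldp}) applies: since $I$ is a good rate function and $F$ is continuous, $(\mathrm{Law}(F(\alpha_N,\pi(\vec\omega^N))))_{N\ge1} = (\mathrm{Law}(\pi(\vec X^N)))_{N\ge1}$ satisfies the LDP with good rate function
\[ J(\mu) = \inf\{\, I(\alpha,\nu) : (\alpha,\nu)\in\A\times\pb(\Omega),\ F(\alpha,\nu)=\mu \,\} = \inf_{\mathbb{X}^\alpha_\#\nu=\mu} I(\alpha,\nu), \]
which is exactly the claimed expression; the contraction principle also guarantees $J$ is itself good (lower semicontinuous with compact sublevel sets, and the infimum is attained whenever finite).

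The main obstacle is not any single deep step but the bookkeeping of topologies: one must ensure the topology on $\A\times\pb(\Omega)$ in which the input LDP is assumed is exactly the one in which $F$ is continuous and the one whose Borel $\sigma$-algebra makes $\mathrm{Law}(\pi(\vec X^N))$ measurable. The Lipschitz estimates of Theorem~\ref{thm:wellposedness} make the continuity of $F$ essentially automatic once this is pinned down, so the real content is choosing compatible topologies (presumably $W_1$ on both $\pb(\Omega)$ and $\pb(\C_E)$, with $\A$ metric) and — if needed — verifying the mild moment/tightness compatibility so that $F$ genuinely maps into the target space. Everything else is a direct invocation of the contraction principle.
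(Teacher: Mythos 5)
Your proposal is correct and follows exactly the paper's argument: the identity $\pi(\vec{X}^N) = \mathbb{X}^{\alpha_N}_\#\pi(\vec{\omega}^N)$, continuity of $(\alpha,\nu)\mapsto\mathbb{X}^\alpha_\#\nu$ deduced from the Lipschitz estimates of Theorem~\ref{thm:wellposedness}, and the contraction principle. The extra detail you supply on the two-term decomposition for continuity is precisely the estimate already carried out in the proof of Corollary~\ref{thm:abstract_meanfield}, so there is nothing to add.
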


\begin{proof}
The Lipschitz-continuity of $(\alpha,\omega) \mapsto \mathbb{X}^\alpha (\omega)$ given by Theorem~\ref{thm:wellposedness} entails that $(\alpha,\nu) \mapsto \mathbb{X}^\alpha_\# \nu$ is continuous.
The result then follows from the standard contraction principle in large deviation theory, see e.g. Appendix~\ref{app:ldp} and \cite[Theorem 4.2.1]{dembo_large_2009}.
\end{proof}

\begin{remark}[Central limit theorem]
To complete the picture, it is tempting to derive a central limit theorem (CLT) to capture the normal fluctuations of the convergence of $( \pi(\vec{X}^N) )_{N \geq 1}$.
If such a CLT is known for $(\alpha_N,\pi(\vec{\omega}^N))$, then the CLT for $(\pi(\vec{X}^N))_{N \geq 1}$ follows from the delta method if we can perform a first order expansion of $(\alpha,\nu) \mapsto \mathbb{X}^\alpha_\# \nu$.
This would require additional regularity on the drift $b$, which requires a case-by-case study.
In the present article, is is thus irrelevant to write an abstract CLT result, and we rather refer to the existing instances \cite{tanaka_limit_1982,coghi_pathwise_2020,chaintron_quasicontinuity_2024} of this approach in the literature.
\end{remark}

\begin{remark}[Classical McKean-Vlasov system]\label{rem:ldpclassicalmckean}
    As a direct application, we recall the seminal example \eqref{eq:mckeanvlasov} and the results already proved by Tanaka \cite{tanaka_limit_1982}, corresponding in our framework to $\omega=(x,\gamma)\in\Omega := \R^d\times \C_{\R^d}$ and the functions 
    \[ b_t (\alpha,\omega,\X) = \int_\Omega K(\X_t(\omega),\X_t(\omega'))\alpha(\d \omega'),\quad \sigma_t(\omega) = x + \gamma_t. \]
    When the kernel $K:\R^d\times\R^d\to\R^d$ is uniformly bounded and Lipschitz, the only point to check is \eqref{eq:LipbA}. It comes from the decomposition 
    \begin{align*}
        \vert b_t (\alpha,\omega,\mathbb{X}^\alpha) - b_t (\beta,\omega,\mathbb{X}^\beta ) \vert_E \leq \int_\Omega K(\X^\alpha_t(\omega),\X^\alpha_t(\omega'))(\alpha-\beta)(\d \omega') \\
        + \int_\Omega |K(\X^\alpha_t(\omega),\X^\alpha_t(\omega')) - K(\X^\beta_t(\omega),\X^\beta_t(\omega'))|\beta(\d \omega').
    \end{align*}
    By the first point of Theorem~\ref{thm:wellposedness} and the stability of Lispchitz functions by composition, the map $\omega'\mapsto K(\X^\alpha_t(\omega),\X^\alpha_t(\omega'))$ is uniformly Lipschitz and we deduce that the condition \eqref{eq:LipbOmega} is satisfied in $\mathcal{A}=\pb(\Omega)$ endowed with the Wasserstein-1 distance.  Then the first point of Corollary~\ref{thm:abstract_meanfield} gives the classical pathwise coupling estimate 
    \[\E \max_{1 \leq i \leq N} |X^{i,N} - \overline{X}^i|_{\C_{\R^d}} \leq L\E W_1\left(\frac{1}{N}\sum_{k=1}^N \delta_{B^i},\mathscr{W}\right),\]
    where $\overline{X}^{i}$ is the nonlinear McKean-Vlasov process driven by the same initial condition and Brownian motion than $X^{i,N}$. The right-hand side converges to zero by the law of large numbers with a rate computed in \cite{fournier_rate_2015}. The LDP holds similarly by Corollary~\ref{cor:LDP}. Note that $\alpha_N = \pi(\vec{\omega}^N)$ in this case, so we can identify the two variables in Corollary~\ref{cor:LDP}. 
    By Sanov's theorem, $I(\alpha) = H(\alpha|\mathscr{W})$ and we conclude that the particle system satisfies the LDP with good rate function $J$ given above.
    Moreover, the marginal laws $( f_t )_{0 \leq t \leq T}$ of the path law $\X^\mathscr{W}_\#\mathscr{W}$ of the limit McKean-Vlasov process give the weak solution of the nonlinear Fokker-Planck equation 
    \[\partial_t f_t (x) = -\nabla_x\cdot \left(f_t(x)\int_{\R^d}K(x,x')f_t(\d x')\right) + \frac{1}{2}\Delta_x f_t(x).\]
\end{remark}

\section{Applications to particle networks} \label{sec:applications}

In this section we detail how the previous results can be applied to particle systems with network interactions introduced in Section~\ref{ssec:examples}. The main tools are convergence results and large deviation principles for the input digraph parameters that are propagated to the particle systems via the fixed-point map. 

\subsection{Setting}

Let us recall the general setting \eqref{eq:GeneralGraph}
\begin{equation*}
\A = \B ( \Omega, \ps ( \W' \times \Omega ) ), \qquad b_t ( \alpha, \omega, \mathbb{X} ) = \overline{b}_t ( \mathbb{X} ( \omega  ), \pi_t ( \alpha, \omega , \mathbb{X} ) ), 
\end{equation*} 
involving the interaction measure
\[ \pi_t ( \alpha, \omega , \mathbb{X} ) := (W_t (\alpha, \omega, \mathbb{X} ), \mathbb{X})_\#\alpha(\omega) \in \ps ( \W \times \C_E ), \]
for a non-anticipative function $W_t (\alpha, \omega, \mathbb{X} ) : \W' \times \Omega \rightarrow \W$ that is bounded and Lipschitz-continuous uniformly in $(\alpha, \omega, \mathbb{X})$.
We assume that $(\Omega,d_\Omega)$, $(\W,d_\W)$ and $(\W',d_{\W'})$ are metric spaces.

In the rest of this article, for any metric space $(F,d_F)$,
the probability space $\ps ( F )$ is endowed with the bounded-Lipschitz distance defined by
\[ d_{\mathrm{BL}} ( \mu, \nu) := \sup_{\substack{\varphi \in \C ( F) \\
\Vert \varphi \Vert_\infty + \Vert \varphi \Vert_{\mathrm{Lip}} \leq 1}} \int_F \varphi \, \d [ \mu - \nu]. \]

The following lemma reduces the hypotheses on $b$ in Theorem~\ref{thm:wellposedness} to analogous ones on $W$. 

\begin{lemma} \label{lem:VerifInterMeas}
Suppose that the map
\[
\overline{b}_t : \C_E \times \ps ( \W \times \C_E) \rightarrow E, \]
is non-anticipative and Lipschitz-continuous uniformly in $t \in [0,T]$.
For $\alpha \in \A$, if \eqref{eq:LipbX} holds when replacing $b_t$ by $W_t : \A \times \Omega \times (\sigma + \B) \rightarrow \B(\W' \times \Omega, \W )$, then it also holds for $b_t$.
For $\alpha, \beta \in \A$ that are further Lipschitz-continuous, if \eqref{eq:LipbX}-\eqref{eq:LipbOmega}-\eqref{eq:LipbA} hold for $W_t$, then these properties also hold for $b_t$.
\end{lemma}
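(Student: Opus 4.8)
The plan is to propagate each Lipschitz estimate from $W_t$ to $b_t$ through the chain $b_t(\alpha,\omega,\mathbb{X}) = \overline{b}_t(\mathbb{X}(\omega), \pi_t(\alpha,\omega,\mathbb{X}))$ by combining (i) the assumed uniform Lipschitz-continuity of $\overline{b}_t$ in its two arguments, (ii) the elementary fact that the push-forward map $(f, \mathbb{X}) \mapsto (f, \mathbb{X})_\# \mu$ is Lipschitz from the space of bounded Lipschitz maps into $(\ps(\W \times \C_E), d_{\mathrm{BL}})$ when $\mu$ is fixed, and (iii) stability of $d_{\mathrm{BL}}$ under change of the base measure. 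Concretely, for the first assertion (property \eqref{eq:LipbX}, $\alpha$ fixed): given $\mathbb{X}, \mathbb{Y} \in \B$ I would write
\[ |b_t(\alpha,\omega,\mathbb{X}) - b_t(\alpha,\omega,\mathbb{Y})|_E \leq \|\overline{b}_t\|_{\mathrm{Lip}} \big[ |\mathbb{X}_{\wedge t}(\omega) - \mathbb{Y}_{\wedge t}(\omega)|_{\C_E} + d_{\mathrm{BL}}\big( \pi_t(\alpha,\omega,\mathbb{X}), \pi_t(\alpha,\omega,\mathbb{Y}) \big) \big], \]
where I used non-anticipativity of $\overline{b}_t$ to replace $\mathbb{X}(\omega)$ by $\mathbb{X}_{\wedge t}(\omega)$. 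The first term is bounded by $|\mathbb{X}_{\wedge t} - \mathbb{Y}_{\wedge t}|_\B$. For the second term, since both interaction measures are push-forwards of the same $\alpha(\omega) \in \ps(\W' \times \Omega)$, I would use that for any $\varphi$ with $\|\varphi\|_\infty + \|\varphi\|_{\mathrm{Lip}} \leq 1$,
\[ \int \varphi \, \d\big[\pi_t(\alpha,\omega,\mathbb{X}) - \pi_t(\alpha,\omega,\mathbb{Y})\big] = \int_{\W' \times \Omega} \big[ \varphi\big(W_t(\alpha,\omega,\mathbb{X})(w',\omega'), \mathbb{X}(\omega')\big) - \varphi\big(W_t(\alpha,\omega,\mathbb{Y})(w',\omega'), \mathbb{Y}(\omega')\big) \big] \alpha(\omega)(\d w', \d\omega'), \]
and bound the integrand pointwise by $d_\W\big(W_t(\alpha,\omega,\mathbb{X})(w',\omega'), W_t(\alpha,\omega,\mathbb{Y})(w',\omega')\big) + |\mathbb{X}(\omega') - \mathbb{Y}(\omega')|_{\C_E}$, using the product metric and $\|\varphi\|_{\mathrm{Lip}} \leq 1$. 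The second summand integrates to at most $|\mathbb{X} - \mathbb{Y}|_\B$; the first is, by the hypothesis that \eqref{eq:LipbX} holds for $W_t$ (with $\W$-valued target and the sup over $(w',\omega')$ built into the $\B(\W'\times\Omega,\W)$-norm), bounded by $L_W^\B |\mathbb{X}_{\wedge t} - \mathbb{Y}_{\wedge t}|_\B$. Collecting terms and using non-anticipativity of $W_t$ to pass to stopped paths gives \eqref{eq:LipbX} for $b_t$ with constant $L_b^\B = \|\overline{b}_t\|_{\mathrm{Lip}}(1 + L_W^\B + \text{const})$.

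For the second assertion I would run the same decomposition for \eqref{eq:LipbOmega} and \eqref{eq:LipbA}. The extra point for \eqref{eq:LipbOmega} is that perturbing $\omega \to \omega'$ also moves the base measure $\alpha(\omega) \to \alpha(\omega')$, so after the push-forward estimate above I must additionally control $d_{\mathrm{BL}}\big((W_t(\alpha,\omega,\mathbb{X}^\alpha), \mathbb{X}^\alpha)_\#\alpha(\omega), (W_t(\alpha,\omega',\mathbb{X}^\alpha), \mathbb{X}^\alpha)_\#\alpha(\omega')\big)$; this is where the assumed Lipschitz-continuity of $\alpha$ (i.e. $\omega \mapsto \alpha(\omega)$ Lipschitz into $(\ps(\W'\times\Omega), d_{\mathrm{BL}})$) enters, bounding the base-measure contribution by $d_\Omega(\omega,\omega')$ times the Lipschitz constant of $\alpha$ (the test function $\varphi \circ (W_t, \mathbb{X}^\alpha)$ is itself bounded Lipschitz by boundedness and uniform Lipschitz-continuity of $W_t$ together with the $\Omega$-Lipschitz regularity of $\mathbb{X}^\alpha$ from Theorem~\ref{thm:wellposedness}(1)). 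The argument for \eqref{eq:LipbA} is analogous with $\alpha \to \beta$: one gets a base-measure term controlled by $d_\A(\alpha,\beta) = \sup_\omega d_{\mathrm{BL}}(\alpha(\omega),\beta(\omega))$ and a term $d_\W(W_t(\alpha,\omega,\mathbb{X}^\alpha)(\cdot), W_t(\beta,\omega,\mathbb{X}^\beta)(\cdot))$ controlled by \eqref{eq:LipbA} for $W_t$, plus the $|\mathbb{X}^\alpha - \mathbb{X}^\beta|_\B$ term from $\mathbb{X}^\alpha(\omega') \to \mathbb{X}^\beta(\omega')$ inside $\varphi$.

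The routine parts are the triangle-inequality bookkeeping and tracking constants; I expect the only genuinely delicate point to be the base-measure comparison in \eqref{eq:LipbOmega}–\eqref{eq:LipbA}, namely verifying that the composite test function $(w',\omega') \mapsto \varphi\big(W_t(\alpha,\omega,\mathbb{X}^\alpha)(w',\omega'), \mathbb{X}^\alpha(\omega')\big)$ lies (up to a fixed normalization) in the unit ball defining $d_{\mathrm{BL}}$ on $\ps(\W'\times\Omega)$ — this requires the \emph{uniform} boundedness and Lipschitz-continuity of $W_t$ in $(\alpha,\omega,\mathbb{X})$ hypothesized in the setup, plus the $\Omega$-Lipschitz regularity of $\mathbb{X}^\alpha$ supplied by Theorem~\ref{thm:wellposedness}. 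Since the normalization constant is uniform, absorbing it into the final Lipschitz constants for $b_t$ is harmless, and the three properties \eqref{eq:LipbX}, \eqref{eq:LipbOmega}, \eqref{eq:LipbA} for $b_t$ follow.
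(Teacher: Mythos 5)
Your proposal is correct and follows essentially the same route as the paper: the same two-term decomposition of $\int \varphi \, \d[\pi_t(\alpha,\omega,\mathbb{X}) - \pi_t(\beta,\tilde\omega,\mathbb{Y})]$ (integrand variation against the fixed base measure $\alpha(\omega)$, plus base-measure variation against a fixed bounded-Lipschitz composite test function), specialized in turn to $(\beta,\tilde\omega)=(\alpha,\omega)$ for \eqref{eq:LipbX}, $\beta=\alpha$ for \eqref{eq:LipbOmega}, and $\tilde\omega=\omega$ for \eqref{eq:LipbA}, with the Lipschitz regularity of $\omega\mapsto\mathbb{X}^\alpha(\omega)$ from Theorem~\ref{thm:wellposedness} ensuring the composite test function stays in (a fixed multiple of) the $d_{\mathrm{BL}}$ unit ball. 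The point you flag as delicate is exactly the one the paper relies on, so no gap.
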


\begin{proof}
Leveraging the stability of Lipschitz functions under composition, the only delicate point is the behavior of $\pi_t :\A \times \Omega \times (\sigma + \B) \rightarrow \ps ( \W \times \C_E)$.
Under our running assumptions, this behavior follows from the generic decomposition
\begin{align*}
&\int_{\W' \times \Omega} \varphi \, \d  [ \pi_t (\alpha,\omega,\mathbb{X}) - \pi_t (\beta,\tilde\omega,\mathbb{Y}) ] = \\
&\phantom{+} \int_{\W' \times \Omega} \big[ \varphi \big( W_t ( \alpha, \omega, \mathbb{X} ) ( \mathbf{w}' ), \mathbb{X} (\omega' ) \big) - \varphi \big( W_t ( \beta, \tilde\omega, \mathbb{Y} ) ( \mathbf{w}' ), \mathbb{Y} ( \omega') \big) \big] \alpha ( \omega ) (\d \mathbf{w}', \d \omega') \\
&+ \int_{\W' \times \Omega} \varphi \big( W_t ( \beta, \tilde\omega, \mathbb{Y} ) ( \mathbf{w}' ), \mathbb{Y} ( \omega') \big) [ \alpha ( \omega ) -  \beta ( \tilde\omega ) ] (\d \mathbf{w}', \d \omega'), 
\end{align*}
for any bounded-Lipschitz $\varphi \in \C ( \W' \times \Omega )$ and $\mathbb{X}, \mathbb{Y} \in \sigma + \B$.
Using our assumptions on $(\alpha,W_t)$,
Condition \eqref{eq:LipbX} follows by taking $(\beta,\tilde\omega) = (\alpha,\omega)$, yielding existence for $\mathbb{X}^\alpha$ using Theorem~\ref{thm:wellposedness}.
We then specify the above estimate to $(\mathbb{X},\mathbb{Y}) = (\mathbb{X}^\alpha,\mathbb{Y}^\beta)$.
Condition \eqref{eq:LipbOmega} follows by taking $\beta = \alpha$, and Condition \eqref{eq:LipbA} by taking $\tilde\omega=\omega$ and using that $\omega \mapsto \mathbb{X}^\beta (\omega)$ is Lipschitz from the previous condition and Theorem~\ref{thm:wellposedness}.
\end{proof}

For the key example \eqref{eq:binaryscalar} of binary interactions with scalar weights introduced in Section~\ref{ssec:examples}, it is easy to check that the functions $\bar{b}$ and $W$ indeed satisfy the hypotheses of Lemma~\ref{lem:VerifInterMeas} and in particular all the necessary conditions \eqref{eq:LipbX}-\eqref{eq:LipbOmega}-\eqref{eq:LipbA} (instead of $b$). This comes from the fact that $W$ is given by the flow of the ODE~\eqref{eq:odew} with a globally bounded Lipschitz function $\phi$, leading to standard Gronwall estimates. We recall that we always consider a non-anticipative kernel $K_t : \C_E \times \C_E \rightarrow E$ that is globally bounded and Lipschitz uniformly in $t \in [0,T]$. 
Let us further recall that
\begin{align*}\alpha_N(\omega)(\d \mathbf{w}', \dd\omega') &:= \frac{1}{N} \sum_{j=1}^N \delta_{(\mathbf{w}^{[\xi]_N,j}_N,\omega^{j,N})}(\d \mathbf{w}',\d \omega') \in \pb([0,1]\times\Omega), \\
\omega^{i,N} &:= ( i/N,X^i_0,B^i ) \in I \times \R^d\times \mathcal{C}_{\R^d} =: \Omega,
\end{align*}
with the convention $\omega = (\xi, x, \gamma)$, $(X^i_0,B^i)_{i \geq 1}$ being an i.i.d. sequence with law $f_0(\dd x) \otimes\mathscr{W}(\dd \gamma)$, and $(\mathbf{w}^{i,j}_{N})_{1 \leq i,j \leq N}$ being deterministic for simplicity -- although what follows would still work for random weights that are independent of $(X^i_0,B^i)_{i \geq 1}$. Note that $\alpha_N(\omega)\equiv\alpha_N(\xi)$ only depends on the variable $\xi$.
Thanks to Lemma~\ref{lem:VerifInterMeas} and Theorem~\ref{thm:wellposedness}, the map $\X^{\alpha_N}$ is well-defined, which makes sense of the particle system \eqref{eq:particleweights} defined by $X^{i,N} := \mathbb{X}^{\alpha_N}(\omega^{i,N})$. 

In order to prove a mean-field limit and a LDP, we are left to study the asymptotic behavior of the sequences $(\alpha_N)_N\subset \mathcal{A}$ and $(\pi(\vec{\omega}^N))_N\subset\pb(\Omega)$.
To do so, we make the following crucial uniform continuity assumption on the sequence of digraph measures:
\begin{equation} \label{eq:UnifC}
\sup_{N \geq 1} \sup_{\vert \xi - \xi ' \vert \leq \eta} d_{\mathrm{BL}} ( \alpha_N (\xi ), \alpha_N (\xi ') ) \xrightarrow[\eta \rightarrow 0]{\text{a.s.}} 0. 
\end{equation}
This property holds in particular if $\mathbf{w}^{i,j}_N = W ( i/N,j/N )$ is constructed from a deterministic graphon $W : I^2 \rightarrow [0,1]$, with $\xi \mapsto W(\xi,\xi')$ Lipschitz uniformly in $\xi'$. Note that, as discussed in the introduction, we do \emph{not} need to assume that $W$ is jointly Lipschitz in both variables $(\xi,\xi')$.

The main tool is the following lemma.

\begin{lemma} \label{lem:CVDig}
Let us assume that the initial graph sequence converges in the sense of digraph measures in $\ps ( [0,1] \times I)$,
\begin{equation} \label{eq:CVdigr}
\eta^\xi_N := \frac{1}{N} \sum_{j=1}^N \delta_{( \mathbf{w}^{[\xi]_N,j}_N, j/N )} \xrightarrow[N\to+\infty]{} \eta^\xi,
\end{equation} 
where $[\xi]_N$ denotes the integer $i$ such that $\xi\in[i/N,(i+1)/N)$ and the convergence is understood for the $\infty$-BL distance 
\[d_{\infty, \mathrm{BL}} (\eta_N,\eta) := \sup_{\xi \in [0,1]} d_{\mathrm{BL}}(\eta_N^\xi,\eta^\xi) = \sup_{\xi \in [0,1]} \sup_{\substack{\varphi \in \C ( [0,1] \times [0,1]) \\
\Vert \varphi \Vert_\infty + \Vert \varphi \Vert_{\mathrm{Lip}} \leq 1}} \langle \eta^\xi_N,\varphi\rangle - \langle \eta^\xi,\varphi\rangle.\]
Then, under \eqref{eq:UnifC}-\eqref{eq:CVdigr}, $(\alpha_N)_{N \geq 1}$ a.s. converges in $d_{\infty,\mathrm{BL}}$ towards 
\begin{equation} \label{eq:LimDig}
\alpha(\omega)( \d \mathbf{w}', \d \xi', \d x ', \d \gamma') = \eta^\xi(\d \mathbf{w}', \d \xi') \otimes f_0(\d x') \otimes \mathscr{W}(\d \gamma'), 
\end{equation} 
where $\omega = (\xi,x,\gamma)$ -- note however that $\alpha(\omega) \equiv \alpha(\xi)$ does not depend on $(x,\gamma)$. Here we recall that $f_0$ is the initial distribution of the particles. 
\end{lemma}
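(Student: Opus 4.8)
The plan is to show that $\alpha_N(\xi) \to \alpha(\xi)$ in the $d_{\infty,\mathrm{BL}}$ distance by exploiting the product structure of both the prelimit $\alpha_N(\xi)$ and the limit $\alpha(\xi)$. Recall that $\alpha_N(\xi) = \frac1N \sum_{j=1}^N \delta_{(w^{[\xi]_N,j}_N, j/N, X^j_0, B^j)}$ is supported on $[0,1]\times I\times\R^d\times\C_{\R^d}$, and its first two coordinates form exactly the digraph measure $\eta^\xi_N$, while the last two coordinates are the i.i.d.\ sample $(X^j_0,B^j)$ with common law $f_0\otimes\mathscr{W}$. Thus $\alpha_N(\xi)$ is a \emph{coupling} between $\eta^\xi_N$ and the empirical measure $\frac1N\sum_j \delta_{(X^j_0,B^j)}$, and the limit $\alpha(\xi) = \eta^\xi \otimes f_0 \otimes \mathscr{W}$ is the independent coupling of the limits of each factor.

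First I would split the estimate, for a test function $\varphi$ with $\|\varphi\|_\infty + \|\varphi\|_{\mathrm{Lip}} \le 1$, as
\begin{align*}
\langle \alpha_N(\xi) - \alpha(\xi), \varphi\rangle
&= \Big\langle \tfrac1N\textstyle\sum_j \delta_{(w^{[\xi]_N,j}_N, j/N, X^j_0,B^j)} - \eta^\xi_N \otimes \big(f_0\otimes\mathscr{W}\big), \varphi\Big\rangle \\
&\quad + \big\langle (\eta^\xi_N - \eta^\xi)\otimes(f_0\otimes\mathscr{W}), \varphi\big\rangle.
\end{align*}
The second term is controlled uniformly in $\xi$ by $d_{\infty,\mathrm{BL}}(\eta_N,\eta)\to 0$ from \eqref{eq:CVdigr}, after integrating out the $(x,\gamma)$ variables (this only improves the $\mathrm{BL}$ seminorm of the resulting test function on $[0,1]\times I$). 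For the first term, the difference between the empirical joint law and the product of the $\eta^\xi_N$ marginal with the empirical law of $(X^j_0,B^j)$ is itself a glivenko-Cantelli-type fluctuation: using the uniform continuity assumption \eqref{eq:UnifC} to replace $\eta^\xi_N$ by a piecewise-constant-in-$\xi$ (hence over finitely many index blocks) version up to error $\varepsilon$, one reduces to finitely many empirical-measure concentration statements of the form $\frac1N\sum_j g(j/N)\,h(X^j_0,B^j) - \big(\frac1N\sum_j g(j/N)\big)\big(\frac1N\sum_j h(X^j_0,B^j)\big) \to 0$ a.s., which follows from the strong law of large numbers for the i.i.d.\ sequence $(X^j_0,B^j)$ (note $g$ depends only on the label, $h$ only on the sample). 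Finally, I would invoke Wasserstein convergence of $\frac1N\sum_j\delta_{(X^j_0,B^j)}$ to $f_0\otimes\mathscr{W}$, which holds a.s.\ (e.g.\ \cite{fournier_rate_2015}), to close the argument and collect the three error terms.

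The main obstacle is handling the $\xi$-uniformity of the first term: a naive a.s.\ bound holds for each fixed $\xi$, but the block index $[\xi]_N$ ranges over all $N$ values, so a union bound over $\xi$ is not directly available for a fixed realization. This is exactly where assumption \eqref{eq:UnifC} is indispensable: it lets me cover $[0,1]$ by finitely many $\eta$-balls on which $\xi\mapsto \alpha_N(\xi)$ oscillates by at most $\varepsilon$ uniformly in $N$, reducing the uniform-in-$\xi$ control to a finite set of labels, on which the a.s.\ convergence is a finite intersection of probability-one events. The only mild subtlety is that the weights $w^{i,j}_N$ may be unbounded in the $\mathcal{W}=\R$ target, but here the \emph{input} weights live in $\mathcal{W}'=[0,1]$ and it is $\alpha_N$ (hence $\eta_N$) that carries these bounded weights, so tightness and the $\mathrm{BL}$ bounds cause no trouble; the $\R$-valued weights only appear after applying the flow map $W_t$, which is downstream of this lemma.
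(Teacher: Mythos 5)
Your decomposition is exactly the one the paper uses: your first term is the centered fluctuation $\frac1N\sum_j[\varphi(Y^{\xi,j}_N,Z^j)-\E\varphi(Y^{\xi,j}_N,Z^1)]$ of \eqref{eq:InterAverageCV}, your second term is the deterministic convergence \eqref{eq:InterWeakCV} driven by \eqref{eq:CVdigr}, and your use of \eqref{eq:UnifC} plus compactness of $[0,1]$ to pass from fixed $\xi$ to uniformity in $\xi$ is also the paper's final step. The place where your argument has a genuine gap is the uniformity over the test functions $\varphi$. Since $d_{\mathrm{BL}}$ is a supremum over the infinite-dimensional unit ball of $\mathrm{BL}([0,1]\times I\times\R^d\times\C_{\R^d})$, establishing almost sure convergence for each fixed $\varphi$ (even for all $\xi$ simultaneously) does not yield convergence in $d_{\mathrm{BL}}$: the exceptional null set depends on $\varphi$. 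Your proposed fix --- reducing to ``finitely many'' statements for product test functions $g(w,\xi')h(x,\gamma)$ --- does not work as stated, because a general bounded-Lipschitz function on this product space is not a finite sum of products, and any approximation by such sums (Stone--Weierstrass) only holds uniformly on compact sets. One therefore needs tightness of the measures $\overline{P}_N=\frac1N\sum_j\mathrm{Law}(Y^{\xi,j}_N,Z^j)$ first, then a finite $\varepsilon$-net of the $\mathrm{BL}$ ball restricted to a compact set carrying mass $1-\varepsilon$, together with a quantitative control of the mass that $\alpha_N(\omega)$ itself puts outside that compact set (which again requires an almost sure statement). This is precisely the Wellner-type Glivenko--Cantelli argument the paper carries out, and it is the technical heart of the proof; your sketch skips it.

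A second, more minor point: the fluctuation term is a triangular array, not a fixed i.i.d.\ sum --- the summands $\varphi(Y^{\xi,j}_N,Z^j)$ change with $N$ through the weights $w^{[\xi]_N,j}_N$ --- so the plain strong law of large numbers for the sequence $(X^j_0,B^j)_{j\ge1}$ does not directly apply. One needs a strong law for row-wise independent, uniformly bounded triangular arrays (the paper invokes such a result from Hu et al.; alternatively, Hoeffding's inequality plus Borel--Cantelli would do, since the summands are bounded by $2\lVert\varphi\rVert_\infty$). Your closing appeal to Wasserstein convergence of $\frac1N\sum_j\delta_{(X^j_0,B^j)}$ is not needed once the Glivenko--Cantelli step is done correctly, and by itself it cannot substitute for it, because it only controls test functions of $(x,\gamma)$ alone and not the joint dependence on the weight and label coordinates.
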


\begin{proof}
Set $Y^{\xi,j}_N := (\mathbf{w}_N^{[\xi]_N,j}, j/N)$, $Z^j := (X^j_0,B^j)$, so that
\[ \eta^\xi_N = \frac{1}{N} \sum_{j=1}^N \delta_{Y^{\xi,j}_N}, \qquad \alpha_N ( \omega ) = \frac{1}{N} \sum_{j=1}^N \delta_{(Y^{\xi,j}_N,Z^j)}. \]
The $( Y^{\xi,j}_N )_{1 \leq j \leq N}$ are deterministic and the $(Z^j)_{j \geq 1}$ are i.i.d.
For a fixed bounded Lipschitz $\varphi : [0,1] \times \Omega \rightarrow \R$ and $\xi \in [0,1]$, $( \varphi ( Y^{\xi,j}_N, Z^{j} ) - \E [ \varphi ( Y^{\xi,j}_N, Z^{j} )])_{1 \leq j \leq N}$ is a triangular array of row-wise independent centered random variables that are essentially bounded by $2 \lVert \varphi \rVert_\infty$.
From \cite[Theorem 2.1,Corollary (2.12)]{hu_strong_1997}, we deduce that 
\begin{equation} \label{eq:InterAverageCV}
\frac{1}{N} \sum_{j=1}^N \big[ \varphi ( Y^{\xi,j}_N, Z^j ) - \E [ \varphi ( Y^{\xi,j}_N, Z^1 ) ] \big] \xrightarrow[N \rightarrow +\infty]{\text{a.s.}} 0, 
\end{equation} 
using that the $( Z_j )_{j \geq 1}$ are i.i.d. 
Since $y \mapsto \E [ \varphi ( y, Z^1 ) ]$ is bounded Lipschitz, \eqref{eq:CVdigr} implies that
\begin{equation} \label{eq:InterWeakCV}
\frac{1}{N} \sum_{j=1}^N \E [ \varphi ( Y^{\xi,j}_N, Z^1 ) ] \xrightarrow[N \rightarrow +\infty]{} \int_{[0,1] \times \Omega} \varphi \, \d \alpha ( \omega ). 
\end{equation} 
For any fixed $\omega = (\xi, w, \gamma)$, this shows that almost surely $( \alpha_N (\omega))_{N \geq 1}$ weakly converges towards $\alpha 
(\omega)$.
Defining
\[ \overline{P}_N := \frac{1}{N} \sum_{j=1}^N \mathrm{Law} (Y^{\xi,j}_N,Z^j), \]
we notice that \eqref{eq:InterWeakCV} shows convergence for $( \overline{P}_N )_{N \geq 1}$, and \emph{a fortiori} tightness.

To make the convergence of $\int \varphi \, \d \alpha_N (\omega)$ uniform in $\varphi$, we now adapt \cite[Proof of Theorem 1]{wellner_glivenkocantelli_1981}. For $\varepsilon > 0$, tightness for $( \overline{P}_N )_{N \geq 1}$ gives a compact set $K \subset [0,1] \times \Omega$ such that $\overline{P}_N ( K ) \geq 1 - \varepsilon$, for every $N \geq 1$. Let $K^\varepsilon := \{ x \in [0,1] \times \Omega, \; \d_{[0,1] \times \Omega} (x,K) \leq \varepsilon \}$. The set
$\{ \varphi \in \C (K ), \, \lVert \varphi \rVert_\infty + \lVert \varphi \rVert_{\mathrm{Lip}} \leq 1 \big\}$
being compact, it contains a finite $\varepsilon$-covering $(\varphi_i)_{1 \leq i \leq m}$ such that for every $\varphi \in \C ([0,1] \times \Omega)$ with $\lVert \varphi \rVert_\infty + \lVert \varphi \rVert_{\mathrm{Lip}} \leq 1$, there is a $i$ such that
\[ \sup_{x \in  K} \vert \varphi (x) - \varphi_i (x) \vert \leq \varepsilon, \qquad \sup_{x \in  K^\varepsilon} \vert \varphi (x) - \varphi_i (x) \vert \leq 3 \varepsilon. \]
Setting $g (x) := \max \{ 0, 1 - \varepsilon^{-1} d_{[0,1] \times \Omega} (x,K) \}$, $g$ is bounded Lipschitz and $\1_{K} \leq g \leq \1_{K^\varepsilon}$, so that
\[ \forall N \geq 1, \qquad \overline{P}_N (K^\varepsilon) \geq \int_{K^\varepsilon} g \geq \overline{P}_N ( K ) \geq 1 - \varepsilon. \]
Using \eqref{eq:InterAverageCV} with $\varphi = g$, we get a random $N_\varepsilon \geq 1$ such that almost surely
\begin{align*}
\forall N \geq N_\varepsilon, \quad \alpha_N (\omega)(K^\varepsilon) \geq \int_{K^\varepsilon} g \, \d \alpha_N (\omega) &= \int_{K^\varepsilon} g \, \d [ \alpha_N (\omega) - \overline{P}_N] + \int_{K^\varepsilon} g \, \d \overline{P}_N (\omega) \\
&\geq - \varepsilon + (1-\varepsilon) = 1 - 2 \varepsilon.
\end{align*} 
The computation \cite[Top of page 311]{wellner_glivenkocantelli_1981} then yields without any change that almost surely
\[ \forall N \geq N_\varepsilon, \quad \sup_{\substack{\varphi \in \C ( [0,1] \times [0,1]) \\
\Vert \varphi \Vert_\infty + \Vert \varphi \Vert_{\mathrm{Lip}} \leq 1}} \langle \alpha_N (\omega),\varphi\rangle - \langle \overline{P}_N,\varphi\rangle \leq 10 \varepsilon. \]
This shows that $d_{\mathrm{BL}}(\alpha_N (\omega),\overline{P}_N) \rightarrow 0$ almost surely.
Since \eqref{eq:CVdigr} makes the convergence \eqref{eq:InterWeakCV} uniform in $\varphi$ with $\Vert \varphi \Vert_\infty + \Vert \varphi \Vert_{\mathrm{Lip}} \leq 1$, we deduce the almost sure convergence $d_{\mathrm{BL}}(\alpha_N (\omega),\alpha(\omega)) \rightarrow 0$ for every $\omega = (\xi,\mathbf{w},\gamma) \in \Omega$.
Since $[0,1]$ is compact and $\alpha_N (\omega)$ only depends on $\omega$ through $\xi \in [0,1]$, the uniform continuity \eqref{eq:UnifC} makes this convergence uniform in $\omega$ as desired.
\end{proof}

Note that the proof works the same if the weights are random and the convergence assumption \eqref{eq:CVdigr} holds almost surely.  

\subsection{Mean-field limit and Large Deviations}

The following result is a direct application of Corollary~\ref{thm:abstract_meanfield}.

\begin{proposition}[Mean-field limit]\label{pro:MFdig}
If $d_{\infty, \mathrm{BL}}(\alpha_N,\alpha) \rightarrow 0$ a.s., then
\[ \max_{1 \leq i \leq N} \vert X^{i,N} - \mathbb{X}^{\alpha} ( \omega^{i,N}) \vert_{\C_{\R^d}} \xrightarrow[N \rightarrow + \infty]{\text{a.s.}} 0  \]

If moreover $\omega \mapsto \alpha ( \omega )$ is Lipschitz, then
\[ W_1 ( \pi ( \vec{X}^N ), \mathbb{X}^\alpha_\#(\mathrm{Leb_I} \otimes f_0 \otimes \mathscr{W})) \xrightarrow[N \rightarrow + \infty]{\text{a.s.}} 0, \]
where $\mathrm{Leb}_I$ denotes the Lebesgue measure on $I=[0,1]$. 
\end{proposition}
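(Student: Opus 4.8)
The plan is to obtain both assertions from Corollary~\ref{thm:abstract_meanfield}, so the first task is to check that its abstract hypotheses hold in the present instance. Recall that here $b_t(\alpha,\omega,\X)=\bar b_t(\X(\omega),\pi_t(\alpha,\omega,\X))$ with $\bar b$ of the binary--scalar form \eqref{eq:binaryscalar} for a globally bounded, uniformly Lipschitz kernel $K$, and $W_t$ is the time-$t$ flow of the ODE~\eqref{eq:odew} driven by a globally bounded Lipschitz $\phi$. A routine Gronwall estimate shows that this flow satisfies the analogues of \eqref{eq:LipbX}, \eqref{eq:LipbOmega} and \eqref{eq:LipbA}, uniformly in $(\alpha,\omega,\X)$, when $\A=\B(\Omega,\ps(\W'\times\Omega))$ carries the metric $d_\A=d_{\infty,\mathrm{BL}}$. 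Lemma~\ref{lem:VerifInterMeas} then transfers \eqref{eq:LipbX} and \eqref{eq:LipbA} to $b$ for all $\alpha,\beta\in\A$, in particular for $\alpha$ and each $\alpha_N$ with constants independent of $N$; moreover $\sigma_t(\xi,x,\gamma)=x+\gamma_t$ is $1$-Lipschitz in $\omega$ for the product metric on $\Omega$. By Theorem~\ref{thm:wellposedness}, $\X^\alpha$ and the $\X^{\alpha_N}$ are therefore well defined and \eqref{eq:LipXA} holds with an $N$-independent constant $L^\A_\X$.

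This is exactly the setting of point~1 of Corollary~\ref{thm:abstract_meanfield}, which gives
\[ \max_{1\le i\le N}\bigl|X^{i,N}-\X^\alpha(\omega^{i,N})\bigr|_{\C_{\R^d}}\;\le\;L^\A_\X\,d_{\infty,\mathrm{BL}}(\alpha_N,\alpha), \]
and the right-hand side tends to $0$ almost surely by assumption, which proves the first claim.

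For the second claim, suppose in addition that $\xi\mapsto\eta^\xi$, equivalently $\omega\mapsto\alpha(\omega)$, is Lipschitz; then Lemma~\ref{lem:VerifInterMeas} also yields \eqref{eq:LipbOmega} for $b$ at $\alpha$. Point~2 of Corollary~\ref{thm:abstract_meanfield}, applied with $P:=\mathrm{Leb}_I\otimes f_0\otimes\mathscr{W}$, then gives
\[ W_1\bigl(\pi(\vec{X}^N),\X^\alpha_\#P\bigr)\;\le\;L^\A_\X\,d_{\infty,\mathrm{BL}}(\alpha_N,\alpha)+L^\Omega_\X\,W_1\bigl(\pi(\vec{\omega}^N),P\bigr). \]
The first term vanishes a.s. by assumption; for the second, note that $\omega^{i,N}=(i/N,X^i_0,B^i)$ with $(X^i_0,B^i)_{i\ge1}$ i.i.d. of law $f_0\otimes\mathscr{W}$ and the labels $i/N$ an exact discretization of $\mathrm{Leb}_I$. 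For bounded continuous $\varphi$ on $\Omega$, a law of large numbers for the row-wise independent bounded triangular array $\bigl(\varphi(i/N,X^i_0,B^i)\bigr)_{i}$ combined with the Riemann-sum convergence of $\frac1N\sum_{i}\E[\varphi(i/N,X^1_0,B^1)]$ gives $\pi(\vec{\omega}^N)\to P$ weakly a.s., and this upgrades to $W_1$ by the usual tightness/moment argument on the path space $\C_{\R^d}$ (alternatively, one may invoke the augmented Sanov theorem of Appendix~\ref{sec:app}, whose good rate function vanishes only at $P$). Hence the right-hand side tends to $0$ almost surely.

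I expect the only delicate point to be the bookkeeping in the first paragraph: one must confirm that $d_{\infty,\mathrm{BL}}$ is precisely the metric on $\A$ for which $W$, and hence $b$ via Lemma~\ref{lem:VerifInterMeas}, satisfies \eqref{eq:LipbX}--\eqref{eq:LipbA} uniformly in $N$, so that Theorem~\ref{thm:wellposedness} yields an $N$-independent Lipschitz constant $L^\A_\X$; once that is in place, both statements follow immediately from Corollary~\ref{thm:abstract_meanfield} together with the convergence $d_{\infty,\mathrm{BL}}(\alpha_N,\alpha)\to0$ furnished by Lemma~\ref{lem:CVDig}. A secondary care point is the upgrade from weak to $W_1$ convergence for the input empirical measure on path space.
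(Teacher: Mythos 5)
Your proposal is correct and takes essentially the same route as the paper: the paper's entire proof of this proposition is the single sentence that it ``is a direct application of Corollary~\ref{thm:abstract_meanfield}'', and your write-up merely fills in the verification of that corollary's hypotheses (via Lemma~\ref{lem:VerifInterMeas} and the bounded-Lipschitz structure of $K$ and $\phi$, with $d_\A = d_{\infty,\mathrm{BL}}$) together with the a.s.\ convergence $W_1(\pi(\vec{\omega}^N),\mathrm{Leb}_I\otimes f_0\otimes\mathscr{W})\to 0$ that the paper leaves implicit. There is no discrepancy to report.
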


A sufficient condition for the a.s. convergence $d_{\infty, \mathrm{BL}}(\alpha_N,\alpha) \rightarrow 0$ is given in Lemma~\ref{lem:CVDig} and includes the case where $(\mathbf{w}^{i,j}_N )_{N \geq 1}$ is given by a sequence of uniformly Lipschitz graphons. Note that $\xi \mapsto \eta^\xi$ being Lipschitz implies the same property for $\omega \mapsto \alpha (\omega)$ given by \eqref{eq:LimDig}. Another sufficient condition \eqref{eq:EXpCV} is given by Theorem~\ref{thm:LabSanov} in the Appendix. It includes the Erd\H{o}s-Rényi graph where $(\mathbf{w}^{i,j}_N )_{N \geq 1}$ are i.i.d. Bernoulli variables independent of $N$. Indeed, with the notations of Theorem~\ref{thm:LabSanov}, this case corresponds to $\mathcal{U} := \{0,1 \}$, $Y^{i,N} = (X^i_0,B^i)\in E\times\C_E:=F$ and the convergence condition \eqref{eq:EXpCV} reduces to the Riemann sum convergence theorem with, in the limit $W(\xi,\xi',u) = u$ and $\mu^{\xi'}(\d u) =  (\frac12\delta_0+\frac12\delta_1)(\d u)$.

For any $\xi \in I$, the next result computes the large deviations of the interaction measure
\[ \pi_N ( \xi ) := \frac{1}{N} \sum_{j=1}^N \delta_{( \mathbf{w}_N^{[\xi]_N,j}, X^{j,N})} \in \ps ( [0,1] \times \C_{\R^d} ). \]
The large deviations of $\pi(\vec{X}^N)$ follow by the contraction principle.
Note that Theorem~\ref{thm:LabSanov} would give a sufficient condition to apply Corollary~\ref{cor:LDP}, but the following theorem is stronger and provides a better expression of the rate function.
Before stating the theorem, we first introduce an auxiliary map that will be useful in the following.
Applying Theorem~\ref{thm:wellposedness} with $\A = \ps ( [0,1] \times \C_{\R^d})$, for any $\pi \in \ps ( [0,1] \times \C_{\R^d})$ we get a map $\mathbb{Y}^\pi \in \B ( \Omega, \C_{\R^d} )$ such that
\begin{equation} \label{eq:AuxFixedLD}
\forall \omega\in \Omega, \,\,\forall t\in[0,T],\quad \mathbb{Y}^\pi_t(\omega) = \int_0^t \int_{[0,1] \times \C_{\R^d}} \mathbf{w} K_s ( \mathbb{Y}^\pi ( \omega), Y )\, \pi ( \d \mathbf{w}, \d Y)  \dd s + \sigma_t(\omega).
\end{equation}
Leveraging \eqref{eq:binaryscalar} and the uniqueness in Theorem~\ref{thm:wellposedness}, we notice that
\begin{equation} \label{eq:ContractTrick}
\forall \omega =( \xi, x, \gamma ) \in \Omega, \qquad \mathbb{X}^\alpha ( \omega ) = \mathbb{Y}^{(\mathrm{Id},\mathbb{X}^\alpha)_\# \alpha(\omega)} (\omega) = \mathbb{Y}^{(\mathrm{Id},\mathbb{X}^\alpha)_\# \alpha(\xi,0,0)} (\omega). 
\end{equation} 
To the best of our knowledge, the following result is new in the literature. 

\begin{theorem} \label{thm:DigLDP}
Let us denote $F = \R^d \times \C_{\R^d}$ and $Y^{j,N} := (X^{j}_0,B^j)\in F$ and assume that there exists a deterministic $L > 0$ such that the following conditions hold.
\begin{enumerate}
    \item For every $N \geq 1$, $\omega \mapsto \alpha_N (\omega)$ is $L$-Lipschitz.
    \item  For some measurable space $\mathcal{U}$, there exist $\mu \in \B ([0,1], \ps ( \mathcal{U} \times F))$ and $W \in \B ( I^2 \times \mathcal{U}, [0,1] )$ such that for every $\xi'\in I$, $u\in\mathcal{U}$ the map $\xi \mapsto W(\xi,\xi',u)$ is $L$-Lipschitz and for every $\phi \in L^1 (I,\mathrm{BL}([0,1]\times I \times F))$, 
    \begin{multline*} 
    \frac{1}{N} \sum_{j=1}^N \log \E \exp \bigg[ \int_I \phi ( \xi, \mathbf{w}^{[\xi]_N,j}_N , j/N, Y^{j,N} ) \d \xi \bigg] \xrightarrow[N \rightarrow + \infty]{} \\
    \int_I \log \bigg\{ \int_{{\mathcal{U} \times F}} \exp \bigg[ \int_I \phi ( \xi, W ( \xi, \xi', u ) , \xi', y ) \d \xi \bigg] \mu^{\xi'}(\d u,  \d y) \bigg\}\d \xi'.
    \end{multline*} 
\end{enumerate}
Then for every $\xi \in I$, $(\mathrm{Law}(\pi_N(\xi)))_{N \geq 1}$ satisfies the LDP in $(\ps ( [0,1] \times \C_{\R^d}), d_{\mathrm{BL}} )$ with good rate function
\[ K : \pi \mapsto H ( \pi \vert (\mathrm{Id},\mathbb{Y}^\pi)_\#  \overline{\alpha}(\xi,0,0) ), \]
where the a.s. limit of $( \alpha_N )_{N \geq 1}$ is $\overline{\alpha} : 
(\xi,x,\gamma) \mapsto \int_{\mathcal{U}} \delta_{W(\xi,\xi',u)} ( \d \mathbf{w})  \d \xi' \mu^{\xi'}(\d u, \d x',\d \gamma')$.
\end{theorem}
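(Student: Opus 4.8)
The strategy is to transfer a large deviation principle for the ``input'' interaction data to $\pi_N(\xi)$ via the contraction principle, using the auxiliary fixed-point map $\mathbb{Y}^\pi$ of \eqref{eq:AuxFixedLD} to linearize the dependence. First I would fix $\xi \in I$ and observe, using \eqref{eq:ContractTrick}, that $X^{j,N} = \mathbb{Y}^{\pi_N(\xi)}(\omega^{j,N})$ is \emph{not} quite what we want; rather, the correct observation is that $\pi_N(\xi)$ is the push-forward of the labeled empirical measure $\Lambda_N := \tfrac1N \sum_{j=1}^N \delta_{(w^{[\xi]_N,j}_N,\, j/N,\, Y^{j,N})} \in \ps([0,1] \times I \times F)$ under a continuous map. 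Indeed, condition~(2) is exactly the exponential-convergence hypothesis \eqref{eq:EXpCV} required to apply the labeled Sanov theorem (Theorem~\ref{thm:LabSanov}) in the Appendix, which yields an LDP for $(\mathrm{Law}(\Lambda_N))_{N\geq 1}$ in $(\ps([0,1]\times I \times F), d_{\mathrm{BL}})$ with an explicit good rate function $\mathcal{I}_\xi$ of relative-entropy type against the limiting law built from $W$ and $\mu$.

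Next I would exhibit the continuous map carrying $\Lambda_N$ to $\pi_N(\xi)$. Here is the key point: given any $\Lambda \in \ps([0,1]\times I \times F)$ whose first and third marginals are compatible, solve the fixed-point equation for the whole family $(\mathbb{Y}^{\pi})$, but the self-consistency must be resolved jointly. Concretely, I would define the solution map $\Lambda \mapsto \pi(\Lambda)$ by first constructing, via Theorem~\ref{thm:wellposedness} applied in the parameter space $\A = \ps([0,1]\times\C_{\R^d})$ together with a fixed-point in $\pi$ itself, the unique $\pi = \pi(\Lambda)$ such that $\pi = (\mathrm{pr}_1, \mathbb{Y}^\pi\circ(\text{input part}))_\# \Lambda$; the contraction estimate \eqref{eq:LipXA} combined with the Lipschitz dependence of $\mathbb{Y}^\pi$ on $\pi$ (uniform, since $K$ is bounded Lipschitz and $w\in[0,1]$) makes this inner fixed-point well-posed for $T$ small, and then a standard iteration over subintervals of $[0,T]$ removes the smallness restriction. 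The same Gronwall/contraction bounds show $\Lambda \mapsto \pi(\Lambda)$ is continuous (indeed Lipschitz) for $d_{\mathrm{BL}}$. One then checks that $\pi(\Lambda_N) = \pi_N(\xi)$: this is precisely \eqref{eq:ContractTrick} read at the empirical level, using that $\alpha_N(\xi,0,0)$ is the relevant conditional of $\Lambda_N$ and that $X^{j,N} = \mathbb{X}^{\alpha_N}(\omega^{j,N}) = \mathbb{Y}^{(\mathrm{Id},\mathbb{X}^{\alpha_N})_\#\alpha_N(\xi,0,0)}(\omega^{j,N})$.

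Then the contraction principle (\cite[Theorem 4.2.1]{dembo_large_2009}, recalled in Appendix~\ref{app:ldp}) immediately gives an LDP for $(\mathrm{Law}(\pi_N(\xi)))_{N\geq1}$ with good rate function $K(\pi) = \inf\{ \mathcal{I}_\xi(\Lambda) : \pi(\Lambda) = \pi \}$. The final—and most delicate—step is to simplify this variational formula into the closed relative-entropy form $K(\pi) = H(\pi \mid (\mathrm{Id},\mathbb{Y}^\pi)_\#\overline{\alpha}(\xi,0,0))$. For this I would use that $\mathcal{I}_\xi$ from the labeled Sanov theorem is itself of the form $H(\Lambda \mid \text{reference}_\xi)$ (the reference being $\int_I \delta_{W(\xi,\xi',u)}\otimes\delta_{\xi'}\otimes(\cdots)$ disintegrated over $\xi'$), and invoke the variational identity for relative entropy under a push-forward together with the ``tagged particle'' change-of-measure trick: since the constraint $\pi = (\mathrm{pr}_1,\mathbb{Y}^\pi)_\#\Lambda$ determines $\mathbb{Y}^\pi$ from $\pi$ alone, the infimum over $\Lambda$ compatible with a given $\pi$ is attained by the $\Lambda$ obtained by pushing $\pi$ back through the (a.e.\ invertible on the relevant support) map $y \mapsto \mathbb{Y}^\pi(\,\cdot\,, y)$, and the entropy is preserved. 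This mirrors the computation in \cite{daipra_mckeanvlasov_1996,fischer_form_2014} for the classical McKean–Vlasov LDP, where the Girsanov/entropy cost of the driving noise equals the entropy of the path law against the nonlinear reference. The main obstacle I anticipate is making this last reduction rigorous: one must verify that the map defining $\mathbb{Y}^\pi$ intertwines the reference measures correctly (so that relative entropies match under push-forward without loss), and that the joint inner fixed-point defining $\pi(\Lambda)$ does not create additional minimizers—this requires carefully tracking that $\overline{\alpha}(\xi,0,0)$ is exactly the image of $\text{reference}_\xi$ under the input-to-path construction, which is where the Lipschitz-in-label hypotheses (1) and the Lipschitz-in-$\xi$ assumption on $W$ in (2) enter to guarantee everything is well-defined and continuous in $\xi$.
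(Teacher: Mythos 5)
Your overall architecture (labeled Sanov theorem, contraction principle, then entropy disintegration and the entropy--push-forward contraction to reach the closed form of $K$) matches the paper's, but the middle step --- the continuous map carrying the input data to $\pi_N(\xi)$ --- contains a genuine gap. You propose to realize $\pi_N(\xi)$ as $\pi(\Lambda_N)$, where $\Lambda_N = \tfrac1N\sum_j\delta_{(w^{[\xi]_N,j}_N,\,j/N,\,Y^{j,N})}$ and $\pi(\Lambda)$ solves the self-consistent equation $\pi = (\mathrm{pr}_1,\mathbb{Y}^\pi)_\#\Lambda$, justified by the identity $X^{j,N}=\mathbb{Y}^{(\mathrm{Id},\mathbb{X}^{\alpha_N})_\#\alpha_N(\xi,0,0)}(\omega^{j,N})$. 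That identity is false: \eqref{eq:ContractTrick} gives $X^{j,N}=\mathbb{Y}^{(\mathrm{Id},\mathbb{X}^{\alpha_N})_\#\alpha_N(j/N,0,0)}(\omega^{j,N})=\mathbb{Y}^{\pi_N(j/N)}(\omega^{j,N})$, i.e.\ each particle is driven by the interaction measure attached to \emph{its own} label $j/N$, not by $\pi_N(\xi)$. Your inner fixed point would describe an exchangeable system in which every particle feels the single row $[\xi]_N$ of the weight matrix, which is not the system at hand. More fundamentally, $\Lambda_N$ records only the row $[\xi]_N$ of $(w^{i,j}_N)$, while the particles $X^{j,N}=\mathbb{X}^{\alpha_N}(\omega^{j,N})$ depend on the whole matrix through $\alpha_N$; hence $\pi_N(\xi)$ is not even a measurable function of $\Lambda_N$, and no contraction principle starting from $\mathrm{Law}(\Lambda_N)$ alone can produce its LDP.

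The repair is the route the paper takes: apply the labeled Sanov theorem to the \emph{full} digraph measure $\alpha_N\in L^\infty(I,\ps([0,1]\times\Omega))$ in the $d_{\infty,\mathrm{BL}}$ topology, use hypothesis~(1) together with the Lipschitz-in-$\xi$ assumption on $W$ to confine the LDP to the set $\A_L$ of $L$-Lipschitz digraphs (on which $(\alpha,\omega)\mapsto\mathbb{X}^\alpha(\omega)$ is uniformly Lipschitz by Lemma~\ref{lem:VerifInterMeas} and Theorem~\ref{thm:wellposedness}), and then contract along $\alpha\mapsto(\mathrm{Id},\mathbb{X}^\alpha)_\#\alpha(\xi,0,0)$, which genuinely sends $\alpha_N$ to $\pi_N(\xi)$. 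The auxiliary map $\mathbb{Y}^\pi$ and \eqref{eq:ContractTrick} enter only \emph{a posteriori}, to rewrite the constraint in the resulting variational formula so that the minimization bears on the single measure $\alpha(\xi,0,0)$ rather than on the entire function $\omega\mapsto\alpha(\omega)$; at that stage the disintegration and entropy-contraction arguments you sketch do apply and yield the announced form of $K$.
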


Note that the condition in the second point corresponds to the condition \eqref{eq:EXpCV} of the labeled Sanov Theorem~\ref{thm:LabSanov}. 

We refer to the Appendix~\ref{app:ldp} for the definition \eqref{eq:defH} of the relative entropy functional~$H$ and for other useful results on large deviations. 
Beyond the Erd\H{o}s-R\'enyi graph previously mentioned, a possible application case is $\mathbf{w}_N^{i,j} = W(i/N,j/N,U^{j})$ for an i.i.d. sequence of random variables $(U^j)_{j \geq 1}$ and a bounded $W$ that is continuous with respect to its first two arguments and uniformly Lipschitz with respect to its first one.
Indeed, the convergence \eqref{eq:EXpCV} then reduces to the Riemann sum convergence theorem. In particular, this covers the random environment model of \cite{daipra_mckeanvlasov_1996}.

\begin{proof}
Let $\A_L \subset \B ( \Omega, \ps ( [0,1] \times \Omega ) )$ denote the subset of bounded $L$-Lipschitz functions.
We are going to work in $\A_L$ rather than $\B ( \Omega, \ps ( [0,1] \times \Omega ) )$, so that the needed conditions \eqref{eq:LipbOmega}-\eqref{eq:LipbA} are satisfied uniformly for every $\alpha, \beta \in \A_L$, using Lemma~\ref{lem:VerifInterMeas}.

Using the trivial continuous injection $L^\infty ( I, \ps ( [0,1] \times \Omega ) ) \rightarrow L^\infty ( \Omega, \ps ( [0,1] \times \Omega ) )$, Theorem~\ref{thm:LabSanov} and the contraction principle give that $( \mathrm{Law} ( \alpha_N ) )_{N \geq 1}$ satisfies the LDP in $L^\infty ( \Omega, \ps ( [0,1] \times \Omega )$ with good rate function
\[
\overline{I} ( \alpha) =  
\begin{cases}
I( \alpha ) \qquad &\text{if  } \alpha ( \xi, x, \gamma ) = \alpha ( \xi,0,0), \\
+ \infty \qquad &\text{otherwise},
\end{cases}
\]
see Theorem~\ref{thm:LabSanov} for the definition of $I$.
Here $\alpha$ only depends on the $\xi$ variable so $\alpha ( \xi,0,0)$ is taken as an arbitrary representative. 

Since $\xi \mapsto W ( \xi, \xi', u)$ is $L$-Lipschitz, the expression of $I$ given by Theorem~\ref{thm:LabSanov} implies that the rate function $\overline{I}$ is only finite on those $\alpha \in L^\infty ( \Omega, \ps ( [0,1] \times \Omega ))$ that are $L$-Lipschitz, so that $( \mathrm{Law} ( \alpha_N ) )_{N \geq 1}$ actually satisfies the LDP in $(\A_L, d_{\infty,\mathrm{BL}})$ with good rate function $\overline{I}$. 

For any fixed $\xi \in I$, we then use the contraction principle for the continuous map 
\[ \alpha \mapsto ( \mathrm{Id}, \mathbb{X}^\alpha)_\# \alpha(\xi,0,0), \]
recalling that $(\alpha,\omega) \mapsto \mathbb{X}^\alpha (\omega)$ is Lipschitz-continuous from Theorem~\ref{thm:wellposedness}.
This transformation turns $\alpha_N$ into $\pi_N ( \xi )$, so that $(\mathrm{Law}(\pi_N ( \xi )))_{N \geq 1}$ satisfies the LDP in $\ps ( [0,1] \times \C_{\R^d} )$ with good rate function
\[ K : \pi \mapsto \inf H ( \nu \vert \d \xi' \mu^{\xi'} ), \]
where we minimize over $(\alpha,\nu)$ satisfying
\[ ( \mathrm{Id},\mathbb{X}^\alpha)_\# \alpha ( \xi,0,0 ) = \pi, \qquad \alpha(\cdot)( \d \mathbf{w}, \d \omega') = \int_{\mathcal{U}} \delta_{W(\cdot,\xi',u)} ( \d \mathbf{w}) \nu ( \d u, \d \omega'), \]
with $\omega' = (\xi',x',\gamma')$ and $K(\pi) = +\infty$ if no such $(\alpha,\nu)$ exists.
Using \eqref{eq:ContractTrick},
we notice that
\[ K ( \pi ) = \inf \; H ( \rho \vert \delta_{W(\xi,\xi',u)} ( \d \mathbf{w}) \d \xi' \mu^{\xi'}(\d u, \d x', \d \gamma') ), \]
where we now minimize over
$( \alpha(\xi,0,0),\rho)$ satisfying
\[ ( \mathrm{Id},\mathbb{Y}^\pi)_\# \alpha ( \xi,0,0 ) = \pi, \quad \alpha(\xi,0,0)( \d \mathbf{w}, \d \xi', \d x', \d \gamma') = \int_{\mathcal{U}} \rho ( \d u, \d \mathbf{w}, \d \xi',\d x', \d \gamma'). \]
We emphasize that we only need to minimize over the measure $\alpha (\xi,0,0) \in \ps ([0,1] \times \Omega)$, and not anymore over the entire function $\omega \mapsto \alpha ( \omega )$.
From the marginal disintegration (Appendix~\ref{app:ldp} and \cite[Theorem D.13]{dembo_large_2009}), we have
\begin{multline*}
H ( \rho \vert \delta_{W(\xi,\xi',u)} ( \d \mathbf{w}) \d \xi' \mu^{\xi'}(\d u, \d x') ) = H ( \alpha ( \xi,0,0 ) \vert \overline\alpha ( \xi,0,0 ) ) \\ + \int_{{[0,1]\times }\Omega} H ( \rho^{{w,}\omega'} \vert \overline\mu^{{w,}\omega'} ) \alpha ( \xi,0,0) ({\d \mathbf{w},}\d \omega'), 
\end{multline*}
using the disintegrations 
\[\delta_{W(\xi,\xi',u)} ( \d \mathbf{w}) \d \xi' \mu^{\xi'}(\d u, \d x') = \overline\mu^{\mathbf{w},\omega'} ( \d u) \overline\alpha ( \xi, 0,0 ) ( \d \mathbf{w}, \d \omega' ),\] and 
\[\rho ( \d u, \d \mathbf{w}, \d \omega' ) = \rho^{\mathbf{w},\omega'} ( \d u) \alpha ( \xi,0,0 ) ( \d \mathbf{w}, \d \omega' ).\]
Since the second term on the right-hand side is always nonnegative and can be canceled independently by minimizing over $(\mathbf{w},\omega') \mapsto \rho^{\mathbf{w},\omega'}$, we get
\[ K ( \pi ) = \inf_{( \mathrm{Id},\mathbb{Y}^\pi)_\# \alpha ( \xi,0,0 ) = \pi} H ( {\alpha}(\xi,0,0) \vert \overline{\alpha}(\xi,0,0) ), \]
where we only minimize over $\alpha (\xi,0,0) \in \ps ([0,1] \times \Omega)$. The desired expression for $K$ then results from the contraction property of relative entropy proved in \cite[Lemma A.1]{fischer_form_2014} and recalled in Appendix~\ref{app:ldp}.
\end{proof}

\begin{remark}
    As noted in the proof, the second condition of the theorem implies the LDP for $(\mathrm{Law}(\alpha_N))_{N\geq 1}$ hence the convergence of the sequence
    \[\alpha_N \underset{N\to+\infty}{\longrightarrow} \overline{\alpha},\]
    almost surely for the $d_{\infty,\mathrm{BL}}$ topology in $\mathcal{B}(I,\pb([0,1]\times \Omega))$ with, for any $\xi\in I$,
    \[\overline{\alpha}(\xi)(\d \mathbf{w}, \d\omega') = \int_{\mathcal{U}} \delta_{W(\xi,\xi',u)}(\d \mathbf{w})\d\xi'\mu^{\xi'}(\d u , \d x',\d\gamma'),\quad \omega'=(\xi',x',\gamma')\in\Omega.\]
    Proposition~\ref{pro:MFdig} thus implies the convergence of $\pi(\vec{X}^N)$ towards $\X^{\overline{\alpha}}_\#(\mathrm{Leb}_I\otimes f_0\otimes\mathscr{W})$. However, the LDP also implies the convergence of the sequence $(\pi_N(\xi))_{N\geq 1}$ towards the unique zero $\overline{\pi}^\xi$ of $K$ which satisfies the relation 
    \[\overline{\pi}^\xi = (\mathrm{Id},\mathbb{Y}^{\overline{\pi}^\xi})_\#\overline{\alpha}(\xi).\]
    Since $\pi(\vec{X}^N)$ is the second marginal of $\pi_N(\xi)$ this implies that 
    \[\pi(\vec{X}^N) \underset{N\to + \infty}{\longrightarrow} \hat{\pi}(\d Y) := \int_{\mathbf{w} \in[0,1]}\overline{\pi}^\xi(\d \mathbf{w}, \d Y),\]
    where $\hat{\pi}$ is independent of $\xi$ as shown below. As a sanity check, we thus need to verify the consistency of the two limits, namely that
    \[\hat{\pi} = \X^{\overline{\alpha}}_\#(\mathrm{Leb}_I\otimes f_0\otimes\mathscr{W}).\]

    First we note that the almost sure convergence of the sequence $(\alpha_N)_{N\geq1}$ also implies the almost sure convergence of the digraph sequence
    \[\eta_N(\xi)(\d \mathbf{w},\d \xi') = \frac{1}{N}\sum_{j=1}^N \delta_{(\mathbf{w}^{[\xi]_N,j}_N,\frac{j}{N})}= \int_{x'\in \R^d, \gamma'\in\C_{\R^d}} \alpha_N(\xi)(\d \mathbf{w}, \d\xi',\d x',\d\gamma')\to\eta^\xi,\]
    where
    \[\eta^\xi(\d \mathbf{w},\d\xi') := \int_{\mathcal{U}}\delta_{W(\xi,\xi',u)}(\d \mathbf{w})\left(\int_{\R^d\times \C_{\R^d}}\mu^{\xi'}(\d u, \d x',\d\gamma')\right)\d\xi'.\]
    Using Lemma~\ref{lem:CVDig} (with the almost sure convergence of $\eta_N^\xi$ instead of a deterministic convergence), we deduce that
    \[\overline{\alpha}(\xi)(\d \mathbf{w}, \d \omega') = \eta^\xi(\d \mathbf{w}, \d \xi')\otimes f_0(\d x')\otimes \mathscr{W}(\d\gamma').\]

    Secondly, by definition \eqref{eq:AuxFixedLD} of $\mathbb{Y}^{\overline{\pi}^\xi}$ and the uniqueness in Theorem~\ref{thm:wellposedness}, it holds that $\X^{\overline{\alpha}} (\omega ) = \mathbb{Y}^{\overline{\pi}^\xi} (\omega)$. 
    
    Thus, for any test function $\Phi$ on $\C_{\R^d}$, 
    \begin{align*}
        \int_{\C_{\R^d}}\Phi(Y) \hat{\pi}(\d Y) &= \int_{\C_{\R^d}}\Phi(\X^{\overline{\alpha}}(\omega'))\int_{\mathbf{w}\in[0,1]}\overline{\alpha}(\d \mathbf{w}, \d\omega')\\
        &=\int_{\C_{\R^d}}\Phi(\X^{\overline{\alpha}}(\omega'))\int_{\mathbf{w}\in[0,1]}\eta^\xi(\d \mathbf{w},\d\xi')\otimes f_0(\d x')\otimes \mathscr{W}(\d\gamma'),
    \end{align*}
and we can easily check from the above definition of $\eta^\xi$ that indeed, for any $\xi\in I$,
\[\int_{\mathbf{w}\in[0,1]}\eta^\xi(\d \mathbf{w},\d\xi') = \d\xi',\]
which yields the desired result. 
\end{remark}

\begin{remark}[Sparse Erd\H{o}s-R\'enyi] \label{rem:SparseErdos}
Our framework, and in particular the condition in Theorem~\ref{thm:DigLDP}, covers all the Erd\H{o}s-R\'enyi models where the weights are Bernoulli variables with a fixed expectation independent of $N$ (or more generally which converges towards a given value). It does not work however for the so-called sparse case where this expectation vanishes $p_N\to0$ but the interaction is at the same time rescaled by $p_N^{-1}$ to remain of order one in expectation \cite{delattre_note_2016,medvedev_continuum_2019,oliveira_interacting_2019,coppini_law_2020}. Since that kind of graph is typically constant (i.e. non adaptive in time, unlike the cases that we consider), such model could be written more concisely in Tanaka's abstract framework using the same input space $\Omega$ but the parameter space $\A = \B(\Omega,\M(\Omega))$, the parameter sequence
\[\alpha_N(\omega)(\dd\omega') = \frac{1}{N}\sum_{j=1}^N \frac{\mathbf{w}^{[\xi]_N,j}_N}{p_N}\delta_{\omega^{j,N}}\in\M(\Omega),\]
and the fixed-point equation
\[\X^\alpha(\omega) = \int_0^t\int_{\C_E} K_t(\X^\alpha(\omega),\X^\alpha(\omega'))\alpha(\omega)(\d\omega') + \sigma_t(\omega).\]
Proving the mean-field limit and the LDP then boils down to the study of the asymptotic behavior of $(\alpha_N)_N$. Although it seems plausible to prove that $\alpha_N$ converges in law towards the empirical measure of the input data (and hence the mean-field limit is the classical McKean-Vlasov process without network interactions), a complete proof and the LDP do not seem easy to obtain in the $\infty$-BL topology. 
\end{remark}

\section{Characterization of the limit}\label{sec:pde}

It remains to characterize the limiting distribution $\mathbb{X}^\alpha_\#(\mathrm{Leb}_I\otimes f_0\otimes\mathscr{W})$ in Proposition~\ref{pro:MFdig}, either as the law of the solution of a stochastic differential equation or as the solution of a partial differential equation problem. The latter provides a convenient and closed characterization but the existence of such PDE problem seems to heavily depend on the particular form of the interactions. We will provide the most general models that we have been able to derive. The starting point is the SDE satisfied by $X^\xi_t := \mathbb{X}^\alpha_t(\omega)$ where $\xi\in I $ is a fixed label, $\omega = (\xi,X_0,B)$ and $(X_0,B)\sim f_0\otimes \mathscr{W}$. Then it holds that 
\begin{subequations}\label{eq:MVnetworkgeneral}
\begin{align}
    &\d X^\xi_t = \int_{[0,1]\times \Omega} W_t(X^\xi,\mathbb{X}^\alpha(\omega'),\mathbf{w})K(X^\xi_t,\mathbb{X}_t^\alpha(\omega')) \alpha(\xi)(\d \mathbf{w},\d \omega') \d t + \d B_t \\
    &\alpha(\xi)(\d \mathbf{w}, \d\omega') = \eta^{\xi}(\d \mathbf{w},\d\xi')\otimes f_0(\d x')\otimes \mathscr{W}(\d \gamma'), \quad \omega' = (\xi',x',\gamma') \in \Omega,
\end{align}
\end{subequations}
where the existence of a deterministic digraph limit $\eta^\xi$ is provided by Lemma~\ref{lem:CVDig}. Here we consider a pointwise kernel $K$ such that $K_t(X,Y) = K(X_t,Y_t)$ for $X,Y\in\C_{\R^d}$ since all the pathwise interactions will be included in the weight function $W$.

\subsection{Constant network}

First we consider the case where the weight function $W$ does not depend on time, i.e $\phi=0$ in \eqref{eq:odew}. Then, the limit process ${X}^\xi_t := \mathbb{X}_t^\alpha(\omega)$ with input data $\omega = (\xi,X_0,B)$ is characterized by the McKean-Vlasov SDE 
\[\dd X^\xi_t = \int_{\R^d} K(X^\xi_t,y)\nu^\xi_t(\dd y) \d t + \dd B_t,\]
where $\nu^\xi_t ( \d y) := \int_{[0,1] \times \Omega} \mathbf{w} \delta_{\mathbb{X}^\alpha_t (\omega)} ( \d y ) \alpha(\xi) (\d \mathbf{w}, \d \omega)$. 
Next, we want to characterize the conditional law $\rho_t^\xi$ of $X^\xi_t$ knowing $\xi$, which is defined against any test function $\varphi \in \C_b(\R^d)$ by
\[\langle \rho^\xi_t ,\varphi\rangle := \E [\varphi(\mathbb{X}^\alpha_t(\omega))\vert\xi ] = \int_{\R^d\times \mathcal{C}_{\R^d}} \varphi(\mathbb{X}_t^\alpha(\xi,x,\gamma)) f_0(\dd x)\mathscr{W}(\d \gamma).\]
By It\={o}'s formula, using that $\alpha(\omega) \equiv \alpha(\xi)$,
\[\partial_t \rho^\xi_t (x) = -\nabla \cdot \bigg[ \rho_t^\xi(x) \int_{\R^d} K(x,y)\nu^\xi_t(\dd y) \bigg] + \frac{1}{2} \Delta\rho^\xi_t (x),\]
in the sense of distributions.
In the setting \eqref{eq:LimDig} of Lemma~\ref{lem:CVDig}, we can write 
\[ \langle \nu^\xi_t,\varphi\rangle = \int_{[0,1]\times\Omega}\mathbf{w} \,\varphi(\mathbb{X}_t^\alpha(\omega'))\alpha(\xi)(\d \mathbf{w}, \dd\omega') = \int_I \langle \rho^\zeta_t,\varphi\rangle \overline{\eta}^\xi(\dd\zeta), \]
where
\[\overline{\eta}^\xi(\d\zeta) = \int_{[0,1]} \mathbf{w} \,\eta^\xi(\d \mathbf{w}, \d\zeta)\in \pb(I).\]
This finally leads to the Vlasov digraph equation \cite{ayi_largepopulation_2024,kuehn_meanfield_2024}
\begin{equation} \label{eq:Vlasov}
\partial_t \rho^\xi_t (x) = -\nabla \cdot \bigg[ \rho^\xi_t(x) \int_I\int_{\R^d} K(x,y)\rho^\zeta_t(\dd y)\overline{\eta}^\xi_t(\dd\zeta) \bigg] + \frac{1}{2}\Delta\rho^\xi_t (x),
\end{equation}
in the sense of distributions.

\subsection{Pathwise network}

Secondly, we consider the case where the initial weights $\mathbf{w}^{i,j}_N$ are all equal to the same arbitrary value but can later evolve in time. Thus in this case the weight function $W$ does not depend on its last argument: $W_t(x,x',\mathbf{w}) \equiv W_t (x,x')$ is the solution at time $t$ of the ODE \eqref{eq:odew} with prescribed initial condition. This setting reduces the problem to the characterization of the law of a classical exchangeable but pathwise McKean-Vlasov process, here given by the SDE 
\begin{equation}\label{eq:pathwisenet} \dd X_t = \int_{\mathcal{C}_{\R^d}} W_t(X,Y) K(X_t,Y_t) P(\dd Y) \dd t+ \dd B_t,\end{equation}
where $P \in \ps ( \C_{\R^d})$ denotes the path law of $X$. This kind of pathwise network may model systems where bonds between individuals are created or destroyed depending on both the full history and the local states, for instance in the case of aging or for mechanical links subject to physical constraints as in \cite{degond_continuum_2016}. There is in general no guarantee that a closed PDE formulation can exist for the law of nonlinear fully pathwise process such as \eqref{eq:pathwisenet}. In the following (formal) proposition, we highlight an important case where such formulation exists. The standard Fokker-Planck equation on the law of \eqref{eq:pathwisenet} at a given time is coupled with another equation describing the evolution of the law of the link between a pair of two independent processes. It is inspired by the heuristic approach developed in \cite{degond_continuum_2016} for a biological model of tissues that are made of collagen fibers linked together by dynamic elastic bonds. This setting might also be natural for applications in social sciences where bonds between individuals depend on their past history.

\begin{proposition}[Informal]\label{pro:closed}
Let us assume that the function $\phi$ in \eqref{eq:odew} is of the form  
\[ \phi(x,\tilde{x},\mathbf{w}) = \ell(x,\tilde{x}) + \alpha(x,\tilde{x}) \mathbf{w} \] 
for some $\ell,\alpha:\R^d\times \R^d \to \R$ and that $(X_t,\tilde{X}_t,W_t(X_t,\tilde{X}_t))$ has a smooth positive density $H_t$ with respect to the Lebesgue measure, where $\tilde{X}$ is an independent copy of $X$. Then the law $p_t$ of $X_t$ satisfies the following closed system of PDEs:
\begin{align*}
\partial_t p_t(x) &= -\nabla \cdot\left(\int_{\R^d} K(x,y)h_t(x,y)\dd y\right) + \frac{1}{2}\Delta p_t(x), \\ 
\partial_t h_t(x,\tilde{x}) &=  -\nabla_x\cdot \left(h_t(x,\tilde{x})\frac{\int_{\R^d} K(x,y) h_t(x,y)\dd y}{p_t(x)}\right)\\
&\quad-\nabla_{\tilde{x}}\cdot \left(h_t(x,\tilde{x})\frac{\int_{\R^d} K(y,\tilde{x}) h_t(y,\tilde{x})\dd y}{p_t(\tilde{x})}\right)\\
&\quad+ \ell(x,\tilde{x})p_t(x)p_t(\tilde{x}) + \alpha(x,\tilde{x})h_t(x,\tilde{x}) + \frac{1}{2}\Delta_{x,\tilde{x}} h_t(x,\tilde{x}).
\end{align*}
Moreover, $h_t$ is linked to $H_t$ by $h_t(x,\tilde{x}) := \int_\R \mathbf{w} H_t ( x, \tilde{x},\mathbf{w}) \d \mathbf{w}$.   
\end{proposition}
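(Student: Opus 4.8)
The plan is to work from the nonlinear pathwise McKean--Vlasov SDE \eqref{eq:pathwisenet} that characterises the limit process and to enlarge the state space so that the path-dependent coefficient $W_t(X,Y)$ is promoted to a genuine extra coordinate; one then reads off a Fokker--Planck equation on the enlarged space and recovers the announced system by taking the $0$th and $1$st $w$-moments. Concretely, I would let $X$ solve \eqref{eq:pathwisenet} with law $P$ and marginals $p_t=\mathrm{Law}(X_t)$, introduce an independent copy $\tilde X$ (driven by an independent Brownian motion), and set $V_t:=W_t(X,\tilde X)$, i.e.\ the solution of $\dot V_t=\phi(X_t,\tilde X_t,V_t)=\ell(X_t,\tilde X_t)+\alpha(X_t,\tilde X_t)V_t$ with the prescribed common initial weight. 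Since $Y\sim P$ in \eqref{eq:pathwisenet} can be realised as $\tilde X$, the drift of $X$ rewrites as the path-conditional expectation $\E[\,K(X_t,\tilde X_t)V_t\mid X\,]$, and likewise for $\tilde X$ via a further independent copy which, by exchangeability, has the same conditional law. The central object is $H_t$, the density of $(X_t,\tilde X_t,V_t)$, together with $h_t(x,\tilde x):=\int_\R w\,H_t(x,\tilde x,w)\,\d w$; crucially, the independence of $X$ and $\tilde X$ forces the $(x,\tilde x)$-marginal of $H_t$ to be $p_t\otimes p_t$.

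Next I would apply It\^o's formula to $\varphi(X_t,\tilde X_t,V_t)$, take expectations, and use the tower property to replace the path-dependent drifts of $X$ and $\tilde X$ by their conditional means given $(X_t,\tilde X_t,V_t)$. The heuristic step on which the whole statement rests is the identification
\[ \E\big[\,\text{drift of }X_t\ \big|\ X_t=x,\,\tilde X_t=\tilde x,\,V_t=w\,\big]\;=\;\frac{1}{p_t(x)}\int_{\R^d} K(x,y)\,h_t(x,y)\,\d y, \]
depending only on $x$ (and symmetrically for $\tilde X_t$): intuitively, $h_t(x,y)/(p_t(x)p_t(y))$ is the mean bond carried by a particle at $x$ toward a particle at $y$. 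Granting this, the weak formulation closes into a linear equation for $H_t$ whose strong form --- using the assumed smoothness and positivity --- is
\begin{align*}
\partial_t H_t &= -\nabla_x\!\cdot\!\Big(H_t\,\tfrac{\int K(x,y)h_t(x,y)\,\d y}{p_t(x)}\Big) -\nabla_{\tilde x}\!\cdot\!\Big(H_t\,\tfrac{\int K(y,\tilde x)h_t(y,\tilde x)\,\d y}{p_t(\tilde x)}\Big)\\
&\quad -\partial_w\big(H_t\,\phi(x,\tilde x,w)\big) + \tfrac12\Delta_x H_t + \tfrac12\Delta_{\tilde x}H_t .
\end{align*}

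The two PDEs of the proposition then follow by marginalising. Integrating the $H_t$-equation in $(\tilde x,w)$ annihilates the $\nabla_{\tilde x}$- and $\partial_w$-divergences and, after using $\int H_t\,\d\tilde x\,\d w=p_t(x)$, yields the Fokker--Planck equation for $p_t$. Multiplying the $H_t$-equation by $w$ and integrating over $w\in\R$ uses that the $x$- and $\tilde x$-operators commute with $\int w\,\d w$; the only term needing attention is $-\int_\R w\,\partial_w(H_t\phi)\,\d w=\int_\R \phi\,H_t\,\d w=\ell(x,\tilde x)\!\int_\R H_t\,\d w+\alpha(x,\tilde x)\!\int_\R wH_t\,\d w=\ell(x,\tilde x)p_t(x)p_t(\tilde x)+\alpha(x,\tilde x)h_t(x,\tilde x)$, where we used the product marginal. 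This is exactly the claimed closed equation for $h_t$, and the affine form of $\phi$ in $w$ is precisely what stops an uncontrolled second moment $\int_\R w^2H_t\,\d w$ from entering.

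The hard part --- and the reason the proposition is only formal --- is the conditional-drift identification above. The genuine drift in \eqref{eq:pathwisenet} is a functional of the entire past path $X_{[0,t]}$ through the ODE defining $V_t$, so conditioning on the present value $V_t=w$ generally reveals information about that past, and $\E[\,\text{drift}\mid X_t,\tilde X_t,V_t]$ need not collapse to a function of $X_t$ alone. Turning this into a theorem would require a Markovian closure, or carrying the full conditional law of the path (e.g.\ through a propagation-of-chaos argument for the triples $(X^i,X^j,V^{ij})$ in the spirit of \cite{degond_continuum_2016}), or restricting to a regime where the closure is exact. One also takes for granted the existence, smoothness and positivity of $H_t$ (assumed in the statement) and the legitimacy of the It\^o/Fokker--Planck manipulations with path-dependent but globally Lipschitz coefficients; these are standard but genuinely needed inputs.
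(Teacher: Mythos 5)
Your proposal follows essentially the same route as the paper: introduce the triple $(X_t,\tilde X_t,W_t(X,\tilde X))$ with density $H_t$, apply It\^o's formula, replace the path-dependent drift by the conditional mean $\int_{\R^d}K(x,y)h_t(x,y)\,\dd y/p_t(x)$, and close the $w$-moment hierarchy using the affine form of $\phi$ together with the marginal identities $\int_\R H_t\,\dd w=p_t\otimes p_t$ and $\int_\R wH_t\,\dd w=h_t$. The only cosmetic difference is that you obtain the $p_t$ equation by marginalising the $H_t$ equation rather than directly, and your explicit identification of the conditional-drift closure (conditioning on $V_t$ reveals path information) as the genuinely formal step is a fair and slightly sharper account of why the paper labels the result informal.
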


This statement is informal, as we do not specify the conditions that ensure the existence and smoothness of $H_t$.
These properties should result from the analysis of the PDE system above, which is beyond the scope of this paper.
However, we still include the formal derivation below, because we believe that this closure argument is interesting on its own and may lead to a rigorous proof.

\begin{proof}(formal)
The main idea is to introduce the measure $h_t$ on $\R^d\times \R^d$ defined weakly by 
\[\langle h_t,\varphi \rangle = \iint_{\mathcal{C}_{\R^d}\times\mathcal{C}_{\R^d} } \varphi (X_t,\tilde{X}_t) \mathbf{w}_t(X,\tilde{X}) P(\dd X)P(\dd \tilde{X}) = \E[\varphi(X_t,\tilde{X}_t)\mathbf{w}_t(X,\tilde{X})],\]
where $\tilde{X}$ will generically denote an independent copy of $X$. Then It\=o's formula gives the equation on $p_t$. The main question is then to find an equation on $h$.

We have the relations
\[\int_\R H_t(x,\tilde{x},\mathbf{w})\dd \mathbf{w} = p_t(x)p_t(\tilde{x}),\quad \int_\R \mathbf{w} H_t(x,\tilde{x},\mathbf{w})\dd \mathbf{w} = h_t(x,\tilde{x}).\]
Using It\=o's formula, $H_t$ satisfies a simple equation.
Indeed, we take a test function $\varphi \equiv\varphi(x,\tilde{x},\mathbf{w})$ on $\R^d\times \R^d\times\R$ and a couple $(X,\tilde{X})\sim P^{\otimes 2}$ and we compute, writing $\mathbf{w}_t \equiv W_t(X,\tilde{X})$, 
\begin{align*}
    \dd \varphi(X_t,\tilde{X}_t,\mathbf{w}_t) =& \nabla_{x}\varphi(X_t,\tilde{X}_t,\mathbf{w}_t)\cdot \int_{\mathcal{C}_{\R^d}} K(X_t,Y_t)W_t(X,Y) P(\dd Y) \d t \\
    & + \nabla_{\tilde{x}}\varphi(X_t,\tilde{X}_t,\mathbf{w}_t)\cdot \int_{\mathcal{C}_{\R^d}} K(Y_t,\tilde{X}_t)W_t(Y,\tilde{X}) P(\dd Y) \d t \\ 
    &+\partial_\mathbf{w} \varphi(X_t,\tilde{X}_t,\mathbf{w}_t) \phi(X_t,\tilde{X}_t,\mathbf{w}_t) \d t  + \frac{1}{2}\Delta_{x,\tilde{x}}\varphi(X_t,\tilde{X}_t,\mathbf{w}_t) \d t + \d M_t,
\end{align*}
where $M_t$ is a martingale term. Then one can notice that 
\begin{align*}
\int_{\mathcal{C}_{\R^d}} K(X_t,Y_t)W_t(X,Y) P(\dd Y) &= \E_Y [K(X_t,Y_t)W_t(X,Y) \vert X_t ] \\
&= \E[K(X_t,\tilde{X}_t)\mathbf{w}_t|X_t] \\
&= \frac{\int_{\R^d} K(X_t,y) \mathbf{w}' H(X_t,y,\mathbf{w}')\dd y \dd \mathbf{w}'}{\int_{\R^d} H_t(X_t,y,\mathbf{w}')\dd y\dd \mathbf{w}'} \\
&= \frac{\int_{\R^d} K(X_t,y) h(X_t,y)\dd y}{p_t(X_t)}.
\end{align*}
Thus taking the expectation and integrating by part It\=o's formula leads to 
\begin{align*}
\partial_t H(x,\tilde{x},\mathbf{w}) =& -\nabla_x\cdot \left(H(x,\tilde{x},\mathbf{w})\frac{\int_{\R^d} K(x,y) h(x,y)\dd y}{p_t(x)}\right)\\
&-\nabla_{\tilde{x}}\cdot \left(H(x,\tilde{x},\mathbf{w})\frac{\int_{\R^d} K(y,\tilde{x}) h(y,\tilde{x})\dd y}{p_t(\tilde{x})}\right)\\
&-\partial_\mathbf{w}\big(H(x,\tilde{x},\mathbf{w})\phi(x,\tilde{x},\mathbf{w})\big) + \frac{1}{2}\Delta_{x,\tilde{x}} H(x,\tilde{x},\mathbf{w}).
\end{align*}
The equation on $h$ is simply obtained by multiplying by $w$ and integrating on~$\R$:
\begin{align*}
    \partial_t h(x,\tilde{x}) = & -\nabla_x\cdot \left(h(x,\tilde{x})\frac{\int_{\R^d} K(x,y) h(x,y)\dd y}{p_t(x)}\right)\\
&-\nabla_{\tilde{x}}\cdot \left(h(x,\tilde{x})\frac{\int_{\R^d} K(y,\tilde{x}) h(y,\tilde{x})\dd y}{p_t(\tilde{x})}\right)\\
&+\int_\R H(x,\tilde{x},w)\phi(x,\tilde{x},\mathbf{w})\dd \mathbf{w} + \frac{1}{2}\Delta_{x,\tilde{x}} h(x,\tilde{x}).
\end{align*}
This equation is closed in the decoupled linear case: $\phi(x,\tilde{x},\mathbf{w}) = \ell(x,\tilde{x}) + \alpha(x,\tilde{x}) \mathbf{w}$ thanks to the moment properties satisfied by $H$, giving the result.
\end{proof}

In this result, one may for instance think of the variable $x$ as a position, the function $\alpha\equiv\alpha(x,\tilde{x})<0$ as a destruction rate and $\ell$ as a creation term which may both depend on the positions of the end points $x,\tilde{x}$ of a link.

\begin{remark}
    In the degenerate case $\ell\equiv0$ and $\alpha(x,\tilde{x}) = 0$ when $(x,\tilde{x})\in \mathcal{D} = \{|x-\tilde{x}|\leq R\}$ and $\alpha(x,\tilde{x}) = -\infty$ on the boundary, a pre-existing link is immediately destroyed when its length $|x-\tilde{x}|$ exceeds a certain thresholding value $R$. One can expect the same equation on $h$ to hold, without the linear term in $\alpha$ but supplemented with the boundary condition $h|_{\partial\mathcal{D}} = 0$. This can be also be seen by noticing that $h$ is the law of a killed diffusion process. However, this case does not satisfy any Lipschitz assumptions so it remains a formal observation at this stage.
\end{remark}

\subsection{Adaptive network} \label{ssec:Adaptive}

To conclude, we combine the results of the two preceding sections to derive a closed characterization of the limiting law when both an initial network and pathwise interactions are considered. The main model of interest is a linear model for the weight evolution (as in the previous section), corresponding to \eqref{eq:odew} with 
\begin{equation}\label{eq:Kuramoto}\phi(x,\tilde{x},\mathbf{w}) = -\lambda \mathbf{w} + \ell(x,\tilde{x}),\end{equation}
for $\lambda>0$ and some function $\ell : \R^d\times \R^d\to \R$. Such model may be used to model neuron activity and is often seen as a generalization of the Kuramoto model for coupled oscillators on graph sequences \cite{kaliuzhnyi-verbovetskyi_mean_2018,medvedev_continuum_2019} with an additional time evolution of the weights. We refer to \cite{gkogkas_mean_2023,ayi_largepopulation_2024} and the references therein for more details. 

The main observation is that thanks to Duhamel's formula, \eqref{eq:odew} can be rewritten
\[\mathbf{w}_t = \e^{-\lambda t} \mathbf{w}' + \int_0^t \e^{-\lambda(t-s)} \ell(\X_s(\omega),\X_s(\omega'))\d s.\]
Then, reporting this in \eqref{eq:MVnetworkgeneral} leads to 
\begin{multline}\label{eq:MVnetworkduhamel}
    \d X^\xi_t = \e^{-\lambda t}\int_{\R^d} K(X^\xi_t,y)\nu_t^\xi(\d y) \d t \\ + \int_\Omega W^0_t(X^\xi,\X^\alpha(\omega'))K(X^\xi_t,\X^\alpha_t(\omega'))\hat{\alpha}(\xi)(\d\omega') \d t + \d B_t,
\end{multline}
where for $X,Y\in\C_{\R^d}$ and $\xi\in I$
\[W^0_t(X,Y) := \int_0^t \e^{-\lambda(t-s)}\ell(X_s,Y_s)\d s,\quad \hat{\alpha}(\xi)(\d \omega') := \int_{\mathbf{w}
\in[0,1]} \alpha(\xi)(\d \mathbf{w},\d\omega').\]

The following statement provides an answer to \cite[Remark 4.3(ii)]{gkogkas_mean_2023} on the existence of a PDE characterization for the limit of such adaptive networks, see also \cite[Section 3]{ayi_largepopulation_2024}.

\begin{corollary}
Under \eqref{eq:Kuramoto} and the assumptions of Proposition \ref{pro:closed}, the law $\rho_t^\xi$ of $X_t$ knowing $\xi$ satisfies the coupled Vlasov system
\begin{align*}
\partial_t\rho_t^\xi(x) =& - \nabla\cdot \left(\rho_t^\xi(x)\e^{-\lambda t}\int_{\R^d} K(x,y)\rho_t^\zeta(\dd y)\eta_0^\xi(\dd\zeta)\right)\\
& \qquad- \nabla\cdot\left(\int_{\R^d}\int_{\R^d} K(x,y)h^{\xi,\zeta}_t(x,y)\dd y\dd\zeta\right) + \frac{1}{2}\Delta\rho_t^\xi, \\
\partial_t h_t^{\xi,\tilde{\xi}}(x,\tilde{x}) = & -\nabla_x\cdot\left(h_t^{\xi,\tilde{\xi}}(x,\tilde{x}) \int_{\R^d} \e^{-\lambda t}K(x,y) \rho^\zeta_t(\dd y)\eta_0^\xi(\dd \zeta)\right) \\ 
& \qquad\qquad - \nabla_x \cdot\left( h_t^{\xi,\tilde{\xi}}(x,\tilde{x}) \frac{\int_I\int_{\R^d} K(x,y)h_t^{\xi,\zeta}(x,y)\dd y\dd \zeta}{\rho_t^\xi(x)}\right)\\
& -\nabla_{\tilde{x}}\cdot\left(h_t^{\xi,\tilde{\xi}}(x,\tilde{x}) \int_{\R^d} \e^{-\lambda t}K(y,\tilde{x}) \rho^\zeta_t(\dd y)\eta_0^{\tilde{\xi}}(\dd \zeta)\right) \\ 
& \qquad\qquad - \nabla_{\tilde{x}} \cdot\left( h_t^{\xi,\tilde{\xi}}(x,\tilde{x}) \frac{\int_I\int_{\R^d} K(y,\tilde{x})h_t^{\zeta,\tilde{\xi}}(y,\tilde{x})\dd y\dd\zeta}{\rho_t^{\tilde{\xi}}(\tilde{x})}\right) \\ 
& + \rho_t^\xi(x)\rho_t^{\tilde{\xi}}(\tilde{x})\ell(x,\tilde{x}) + \frac{1}{2}\Delta_{x,\tilde{x}}h_t^{\xi,\tilde{\xi}}
\end{align*}

\end{corollary}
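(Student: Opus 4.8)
The plan is to run the It\^o-plus-closure argument of Proposition~\ref{pro:closed} with the two labels $\xi,\tilde\xi\in I$ carried along as frozen parameters, and to split the interaction weight via the Duhamel identity behind \eqref{eq:MVnetworkduhamel}: the frozen initial weights $\e^{-\lambda t}w'$ will produce the $\eta_0^\xi$-terms --- exactly as in the constant-network computation leading to \eqref{eq:Vlasov} --- while the time-generated part $W^0_t$ will produce the $h$-terms --- exactly as in Proposition~\ref{pro:closed}. In other words, the limiting law is obtained as a superposition of the two interaction mechanisms already treated separately.

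First I would fix $(\xi,\tilde\xi)\in I^2$ and realize two independent copies $X^\xi=\X^\alpha(\xi,X_0,B)$ and $\tilde X^{\tilde\xi}=\X^\alpha(\tilde\xi,\tilde X_0,\tilde B)$ of the limit process, with $(X_0,B)$ and $(\tilde X_0,\tilde B)$ i.i.d.\ of law $f_0\otimes\mathscr{W}$, each solving the version of \eqref{eq:MVnetworkduhamel} for its own label. I would set $\rho^\xi_t:=\mathrm{Law}(X^\xi_t\mid\xi)$ and introduce the two-particle moment measure on $\R^d\times\R^d$,
\[
\langle h^{\xi,\tilde\xi}_t,\varphi\rangle:=\E\big[\varphi(X^\xi_t,\tilde X^{\tilde\xi}_t)\,W^0_t(X^\xi,\tilde X^{\tilde\xi})\,\big|\,\xi,\tilde\xi\big],\qquad W^0_t(X,Y):=\int_0^t\e^{-\lambda(t-s)}\ell(X_s,Y_s)\,\d s,
\]
which is the analogue of the object $h_t$ of Proposition~\ref{pro:closed}, now with the labels frozen and the initial weight split off --- the latter survives only in the $\e^{-\lambda t}\nu^\xi_t$ term of \eqref{eq:MVnetworkduhamel}. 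Using the form $\alpha(\xi)(\d w,\d\omega')=\eta^\xi(\d w,\d\zeta)\otimes f_0(\d x')\otimes\mathscr{W}(\d\gamma')$ from Lemma~\ref{lem:CVDig} with $\omega'=(\zeta,x',\gamma')$, together with $\int_{[0,1]}\eta^\xi(\d w,\d\zeta)=\d\zeta$ and $\eta_0^\xi(\d\zeta):=\int_{[0,1]}w\,\eta^\xi(\d w,\d\zeta)$, I would rewrite the conditional expectation, given the current positions, of the two drift terms of \eqref{eq:MVnetworkduhamel} felt by the $\xi$-copy as
\[
\e^{-\lambda t}\int_I\!\int_{\R^d}K(X^\xi_t,y)\,\rho^\zeta_t(\d y)\,\eta_0^\xi(\d\zeta)\qquad\text{and}\qquad\frac1{\rho^\xi_t(X^\xi_t)}\int_I\!\int_{\R^d}K(X^\xi_t,y)\,h^{\xi,\zeta}_t(X^\xi_t,y)\,\d y\,\d\zeta,
\]
after disintegrating over the trajectory of a generic other particle, and symmetrically for the $\tilde\xi$-copy.

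Next I would apply It\^o's formula to $\varphi(X^\xi_t)$, $\varphi\in\C^2_b(\R^d)$, take the conditional expectation given $\xi$ and integrate by parts: multiplying the two effective drifts by $\rho^\xi_t(x)$ gives the two divergence terms --- the $1/\rho^\xi_t$ in the second one cancels, which is why that flux carries no $\rho^\xi_t$ prefactor --- and the diffusion gives the Laplacian, yielding the first PDE. For the second PDE I would apply It\^o's formula to $\varphi(X^\xi_t,\tilde X^{\tilde\xi}_t)$, $\varphi\in\C^2_b(\R^d\times\R^d)$, combine it with the weight ODE $\tfrac{\d}{\d t}W^0_t=-\lambda W^0_t+\ell(X^\xi_t,\tilde X^{\tilde\xi}_t)$, $W^0_0=0$ via the product rule for $\varphi(X^\xi_t,\tilde X^{\tilde\xi}_t)\,W^0_t$, and take the conditional expectation given $(\xi,\tilde\xi)$. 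Disintegrating with respect to $(X^\xi_t,\tilde X^{\tilde\xi}_t)$ and expressing the mean-field drifts felt by the two copies via $\rho$, $\eta_0$ and $h$ as above produces the four transport terms; the $\ell$ part of the ODE forcing gives the source $\rho^\xi_t(x)\rho^{\tilde\xi}_t(\tilde x)\ell(x,\tilde x)$ by independence of the two copies given $(\xi,\tilde\xi)$, the $-\lambda W^0_t$ part gives the relaxation term proportional to $h^{\xi,\tilde\xi}_t$ inherited from Proposition~\ref{pro:closed}, and the second-order term gives $\tfrac12\Delta_{x,\tilde x}h^{\xi,\tilde\xi}_t$.

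The reason the system closes is the affine dependence of $\phi$ on the weight in \eqref{eq:Kuramoto}: along the trajectories $W^0_t$ enters the $h$-dynamics only linearly, so no moment beyond $h^{\xi,\tilde\xi}_t=\int_\R w\,H^{\xi,\tilde\xi}_t(x,\tilde x,w)\,\d w$ is created, exactly as in Proposition~\ref{pro:closed}. I expect the main obstacle to be bookkeeping rather than analysis: one must track the Duhamel split so that the frozen initial weight feeds only the $\e^{-\lambda t}\eta_0^\xi$-terms while the generated weight feeds only the $h^{\xi,\zeta}_t$-terms, and one must carry out with care the conditional disintegrations $\E[\,\cdot\mid X^\xi_t]$ and $\E[\,\cdot\mid X^\xi_t,\tilde X^{\tilde\xi}_t]$ of the genuinely path-dependent mean-field drift. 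As in Proposition~\ref{pro:closed}, these presuppose the smoothness and positivity of the joint densities, so the statement stays formal to the same extent --- existence and regularity of $H^{\xi,\tilde\xi}_t$ being assumed rather than derived.
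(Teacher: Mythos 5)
Your proposal follows essentially the same route as the paper's proof: split the drift via the Duhamel identity behind \eqref{eq:MVnetworkduhamel}, let the frozen initial weights reproduce the constant-network computation of \eqref{eq:Vlasov} (the $\e^{-\lambda t}\eta_0^\xi$ fluxes), let the generated weight $W^0_t$ reproduce the closure of Proposition~\ref{pro:closed} (the $h$-fluxes and the source $\rho^\xi_t\rho^{\tilde\xi}_t\ell$), and superpose the two by linearity of It\^o's formula while carrying the labels $(\xi,\tilde\xi)$ as frozen parameters. Your identification of the two effective drifts after conditional disintegration, the four transport terms, the source term, and the diffusion all match the paper.

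There is one substantive discrepancy you should reconcile. You keep the full ODE $\tfrac{\d}{\d t}W^0_t=-\lambda W^0_t+\ell(X^\xi_t,\tilde X^{\tilde\xi}_t)$ for the generated weight and correctly observe that the $-\lambda W^0_t$ part produces a relaxation term proportional to $h^{\xi,\tilde\xi}_t$ in the second equation. But the system you are asked to prove contains no such term: the paper's proof instead identifies the generated part with a pathwise network with $\phi(x,\tilde x,w)=\ell(x,\tilde x)$ (i.e.\ $\alpha\equiv 0$ in the notation of Proposition~\ref{pro:closed}), which yields only the source $\rho^\xi_t\rho^{\tilde\xi}_t\ell$. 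These two cannot both hold for the same object: since the first equation's flux $\int_I\int_{\R^d}K(x,y)h^{\xi,\zeta}_t(x,y)\,\d y\,\d\zeta$ forces $h^{\xi,\tilde\xi}_t$ to be the first moment of $W^0_t$ (with the exponential factor), its evolution necessarily carries the term $-\lambda h^{\xi,\tilde\xi}_t$, exactly as your computation gives. Your bookkeeping is the internally consistent one; the stated system (and the paper's one-line justification) appears to drop this relaxation term. As written, then, your argument does not literally establish the displayed system --- you should either point out that the statement seems to be missing $-\lambda h^{\xi,\tilde\xi}_t(x,\tilde x)$, or explain why that term is absent (it is not, under the definition of $h$ that makes the first equation correct). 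Everything else, including the acknowledged formal status of the densities, is in line with the paper.
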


\begin{proof}
    Equation~\eqref{eq:MVnetworkduhamel} splits the SDE satisfied by $X^\xi_t$ into two terms, the first one corresponding to a constant network (up to a factor $\e^{-\lambda t}$) and the second one to a pathwise network with function $\phi(x,\tilde{x},w) = \ell(x,\tilde{x})$. By linearity of It\=o's formula, the result is then obtained as the sum of \eqref{eq:Vlasov} and the system in Proposition~\ref{pro:closed} with the only difference that one must keep track of the labels in the pathwise network part. 
\end{proof}

Note that as $t\to+\infty$, the influence of the initial network disappears and the system behaves as the pathwise network of Proposition \eqref{pro:closed}.

\appendix

\section{Useful results on Large Deviations}\label{app:ldp}

\begin{definition}[Large Deviation Principle \cite{dembo_large_2009}] Given a lower-semicontinuous function $I : E \rightarrow [0,+\infty]$ on a topological space $E$, a sequence $( \mu_N )_{N \geq 1}$ in $\mathcal{P} ( E )$ satisfies the Large Deviation Principle (LDP) with rate function $I$ when for any Borel set $A\subset E$, it holds that
\[- \inf_{\mathring{A}} I \leq \liminf_{N \rightarrow \infty} N^{-1} \log \mu_N ( A ) \leq \limsup_{N \rightarrow \infty} N^{-1} \log \mu_N ( A ) \leq - \inf_{\overline{A}} I,\]
where $\mathring{A}$ and $\overline{A}$ denote respectively the interior and closure of $A$. The rate function $I$ is a \emph{good rate function} when all its level sets $\{x\in E,\,\,I(x)\leq\alpha\}$, $\alpha\in[0,+\infty)$, are compact subsets of $E$.
\end{definition}

We recall the definition of the relative entropy functional $(P,Q) \mapsto H (P\vert Q)$ between two probability measures
\begin{equation} \label{eq:defH}
H(P \vert Q) := 
\begin{cases}
    \int \log \frac{\d P}{\d Q} \d P \qquad &\text{if  } P \ll Q, \\
    + \infty \qquad &\text{otherwise.}
\end{cases}
\end{equation}

\begin{theorem}[Sanov]
Let $\mu$ be a probability measure on a Polish space $E$, and let $( X^i )_{i\geq1}$ be a sequence of independent $\mu$-distributed random variables. Then the laws in $\mathcal{P} ( \mathcal{P} ( E ) )$ of the measure-valued random variables $\pi(\vec{X}^N) = \frac{1}{N}\sum_{i=1}^N \delta_{X^i}$ satisfy the large deviation with good rate function given by the relative entropy $\nu \mapsto H ( \nu | \mu )$.
\end{theorem}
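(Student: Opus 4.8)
The plan is to follow the classical two-step route: first prove the statement when $E$ is finite by a direct combinatorial computation, then lift it to a general Polish space by a projective limit argument, and finally identify the rate function as relative entropy. When $E=\{1,\dots,k\}$, the empirical measure $\pi(\vec X^N)$ takes values among the \emph{types}, i.e.\ those $\nu\in\ps(E)$ with $N\nu(j)\in\mathbb N$ for all $j$, and for such $\nu$ the exact probability is the multinomial weight $\binom{N}{N\nu(1),\dots,N\nu(k)}\prod_j\mu(j)^{N\nu(j)}$. Stirling's formula gives $\tfrac1N\log\mathbb P(\pi(\vec X^N)=\nu)=-H(\nu\mid\mu)+O(N^{-1}\log N)$ uniformly over types, and since there are at most $(N+1)^k$ of them, summing over the types inside a Borel set $A$ — and using that types are dense in the simplex together with the lower semicontinuity and continuity of $\nu\mapsto H(\nu\mid\mu)$ there — yields both the upper and the lower bound of the LDP.

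To pass to a general Polish $E$, I would use that the weak topology on $\ps(E)$ is the projective limit of the maps $\Pi_\varphi:\nu\mapsto(\langle\nu,\varphi_1\rangle,\dots,\langle\nu,\varphi_m\rangle)$ over finite families $\varphi=(\varphi_1,\dots,\varphi_m)$ of bounded continuous functions. For fixed $\varphi$, $\Pi_\varphi(\pi(\vec X^N))=\tfrac1N\sum_{i=1}^N(\varphi_1(X^i),\dots,\varphi_m(X^i))$ is the empirical mean of i.i.d.\ bounded $\R^m$-valued random variables, so Cram\'er's theorem in $\R^m$ provides an LDP with good rate function $I_\varphi$, the Legendre transform of $\lambda\mapsto\log\E[\e^{\langle\lambda,\varphi(X^1)\rangle}]$. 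The Dawson--G\"artner projective limit theorem then gives an LDP for $(\pi(\vec X^N))_{N\ge1}$ in $\ps(E)$ with good rate function $I(\nu)=\sup_\varphi I_\varphi(\Pi_\varphi(\nu))$. To be sure this LDP holds in the weak topology (and not merely in the coarser $\tau$-topology) I would check exponential tightness: since $\mu$ is tight, choose compacts $K_\ell$ with $\mu(K_\ell^c)\le\e^{-\ell}$; a Chernoff bound (or the finite case applied to the partition $\{K_\ell,K_\ell^c\}$) shows $\mathbb P(\pi(\vec X^N)(K_\ell^c)>\varepsilon)$ decays superexponentially, while $\{\nu:\nu(K_\ell^c)\le\varepsilon\ \forall\ell\}$ is weakly precompact by Prokhorov's theorem.

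It then remains to show $\sup_\varphi I_\varphi(\Pi_\varphi(\nu))=H(\nu\mid\mu)$. By finite-dimensional duality, $I_\varphi(\Pi_\varphi(\nu))=\sup_{\lambda\in\R^m}\{\langle\nu,\sum_j\lambda_j\varphi_j\rangle-\log\E[\e^{\langle\lambda,\varphi(X^1)\rangle}]\}$, and since $\sum_j\lambda_j\varphi_j$ ranges over a dense enough subset of $C_b(E)$ as $\varphi$ and $\lambda$ vary, taking the supremum over all such $\varphi$ turns this into $\sup_{g\in C_b(E)}\{\langle\nu,g\rangle-\log\E[\e^{g(X^1)}]\}$, which equals $H(\nu\mid\mu)$ by the Donsker--Varadhan variational formula for relative entropy; the latter is proved by exhibiting the (truncated and regularised) optimiser $g=\log\frac{\d\nu}{\d\mu}$ when $\nu\ll\mu$ and by a separating-function argument yielding $+\infty$ when $\nu\not\ll\mu$, and goodness of $H(\cdot\mid\mu)$ follows from the de la Vall\'ee--Poussin criterion together with tightness of $\mu$. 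I expect the main obstacle to be exactly this last step: establishing the Donsker--Varadhan formula and, in particular, checking that restricting the test functions to $C_b(E)$ rather than all bounded measurable functions does not decrease the supremum, together with the exponential tightness estimate that makes the projective-limit LDP valid in the weak topology. The remaining ingredients — the type count, Cram\'er's theorem in $\R^m$, and Dawson--G\"artner — are standard.
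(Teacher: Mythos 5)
The paper does not prove this statement: it is quoted as a classical result with a pointer to \cite[Theorem~6.2.10]{dembo_large_2009}, so there is no in-paper argument to compare against. Your outline is a correct sketch of the standard proof found in that reference --- method of types for the finite-alphabet case, Cram\'er plus Dawson--G\"artner for a general Polish space, exponential tightness via Prokhorov to get the LDP in the weak topology, and the Donsker--Varadhan variational formula to identify the rate function as $\nu \mapsto H(\nu\,|\,\mu)$ --- and you correctly flag the two genuinely delicate points (restricting the projective-limit LDP from the dual of $C_b(E)$ back to $\ps(E)$, and the sufficiency of $C_b(E)$ test functions in the variational formula).
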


See for instance \cite[Theorem 6.2.10]{dembo_large_2009}.

\begin{lemma}[Contraction principle] Let $\mathcal{X}$ and $\mathcal{Y}$ be Hausforff topological spaces and $f:\mathcal{X}\to\mathcal{Y}$ be a continuous function. If a family of probability measures $(\mu_N)_N$ on $\mathcal{X}$ satisfies the LDP with good rate function $I$, then the family of push-forward measures $(f_\#\mu_N)_N$ on $\mathcal{Y}$ satisfies the LDP with good rate function 
\[J : y\in \mathcal{Y} \mapsto J(y) := \inf\{I(x) :\,\,x\in\mathcal{X},\,\,y=f(x)\}.\]
\end{lemma}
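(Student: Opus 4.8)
The plan is to transport the large deviation principle along $f$, using only that $f$ is continuous (so $f^{-1}$ sends open sets to open sets and closed sets to closed sets), together with the tautologies $f_\#\mu_N(A) = \mu_N(f^{-1}(A))$ for every Borel $A \subseteq \mathcal{Y}$ and $\inf_{x \in f^{-1}(A)} I(x) = \inf_{y \in A} J(y)$. Granting these, the two halves of the LDP for $(f_\#\mu_N)_N$ are immediate substitutions, and the only point requiring care is that $J$ is a good rate function. This is the classical statement \cite[Theorem~4.2.1]{dembo_large_2009} already invoked above; I indicate the argument.

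First I would verify that $J$ is good. Fix $a \in [0,+\infty)$; I claim $\{J \le a\} = f(\{I \le a\})$. The inclusion $\supseteq$ is trivial. For $\subseteq$, let $J(y) \le a$; since $\mathcal{Y}$ is Hausdorff and $f$ is continuous, $f^{-1}(\{y\})$ is closed, and the definition of $J$ provides $x_0 \in f^{-1}(\{y\})$ with $I(x_0) \le a+1$, so the nonempty set $f^{-1}(\{y\}) \cap \{I \le a+1\}$ is a closed subset of the compact sublevel set $\{I \le a+1\}$, hence compact. Since $I$ is lower semicontinuous it attains its minimum on this set, and that minimum equals $J(y) \le a$; hence some $x^\ast$ satisfies $f(x^\ast) = y$ and $I(x^\ast) \le a$, i.e.\ $y \in f(\{I \le a\})$. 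Therefore $\{J \le a\} = f(\{I \le a\})$ is the continuous image of a compact set, hence compact, hence closed in $\mathcal{Y}$; all sublevel sets being closed makes $J$ lower semicontinuous, and being compact makes $J$ a good rate function.

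Next I would establish the LDP bounds. For closed $C \subseteq \mathcal{Y}$ the preimage $f^{-1}(C)$ is closed, so the upper bound for $(\mu_N)_N$ yields
\[ \limsup_{N \to \infty} N^{-1} \log f_\#\mu_N(C) = \limsup_{N \to \infty} N^{-1} \log \mu_N(f^{-1}(C)) \le -\inf_{f^{-1}(C)} I = -\inf_C J. \]
Symmetrically, for open $G \subseteq \mathcal{Y}$ the preimage $f^{-1}(G)$ is open, so the lower bound gives
\[ \liminf_{N \to \infty} N^{-1} \log f_\#\mu_N(G) = \liminf_{N \to \infty} N^{-1} \log \mu_N(f^{-1}(G)) \ge -\inf_{f^{-1}(G)} I = -\inf_G J. \]
These two inequalities are exactly the LDP for $(f_\#\mu_N)_N$ with rate function $J$.

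The only genuine obstacle is the goodness of $J$, and within it the fact that the infimum defining $J(y)$ is attained whenever it is finite, so that $\{J \le a\} = f(\{I \le a\})$; this is where the compactness of the sublevel sets of $I$ and the Hausdorff hypothesis on $\mathcal{Y}$ are used. Once that is secured, the remainder is a purely formal rewriting of the definition of the LDP through $f^{-1}$.
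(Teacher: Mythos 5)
Your proof is correct: the two LDP bounds follow by the substitution $f_\#\mu_N(A)=\mu_N(f^{-1}(A))$ together with continuity of $f$, and your identification $\{J\le a\}=f(\{I\le a\})$ (via attainment of the infimum on the compact set $f^{-1}(\{y\})\cap\{I\le a+1\}$) correctly establishes goodness of $J$. The paper offers no proof of its own here --- it only cites \cite[Theorem~4.2.1]{dembo_large_2009} --- and your argument is exactly the standard one from that reference, so there is nothing to compare beyond noting that you have filled in the details the paper delegates to the citation.
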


See for instance \cite[Theorem 4.2.1]{dembo_large_2009}.

\begin{lemma}[Marginal decomposition] Let $\Sigma = \Sigma_1\times\Sigma_2$ where $\Sigma_1,\Sigma_2$ are Polish spaces and let $\pi:\Sigma\to\Sigma_1,(\sigma_1,\sigma_2)\mapsto\sigma_1$ be the projection on the first component.  For a given measure $\mu\in \pb(\Sigma)$, its restriction to $\Sigma_1$ is denoted by $\mu_1 = \pi_\#\mu \in \pb(\Sigma_1)$. A regular conditional probability distribution (rcpd) given $\pi$ is a mapping $\sigma_1\in\Sigma_1\mapsto \mu^{\sigma_1} \in \pb(\Sigma)$ such that 
\[\mu^{\sigma_1}(\{\sigma : \pi(\sigma)\ne\sigma_1\}) = 0\]
and such that for any measurable subset $A\subset\Sigma$, the map $\sigma_1\mapsto\mu^{\sigma_1}(A)$ is measurable and 
\[\mu(A) = \int_{\Sigma_1} \mu^{\sigma_1}(A)\mu_1(\d \sigma_1).\]
Note that thanks to the first condition, $\mu^{\sigma_1}(\cdot)$ can be considered as a measure on $\Sigma_2$ with for any measurable set $A_2\subset \Sigma_2$, 
\[\mu^{\sigma_1}(A_2) \equiv \mu^{\sigma_1}(\{(\sigma_1,\sigma_2) : \sigma_2\in A_2\}).\]
Now, let $\mu,\nu\in \pb(\Sigma)$. Then the map 
\[\Sigma_1\to[0,+\infty),\,\,\,\sigma_1\mapsto H\big(\nu^{\sigma_1}(\cdot)|\mu^{\sigma_1}(\cdot)\big),\]
is measurable and 
\[H(\nu|\mu) = H(\nu_1|\mu_1) + \int_{\Sigma_1} H\big(\nu^{\sigma_1}(\cdot)|\mu^{\sigma_1}(\cdot)\big)\nu_1(\d\sigma_1).\]
\end{lemma}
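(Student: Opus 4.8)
The statement is the classical disintegration (chain rule) for relative entropy, and the plan is to prove it by a direct Radon--Nikodym computation, splitting into cases according to absolute continuity. I would begin with the measurability claim: the functional $(P,Q)\mapsto H(P\vert Q)$ is lower semicontinuous on $\pb(\Sigma_2)\times\pb(\Sigma_2)$ for the weak topology, hence Borel measurable, while $\sigma_1\mapsto(\nu^{\sigma_1},\mu^{\sigma_1})$ is measurable into $\pb(\Sigma_2)^2$ by the very definition of a regular conditional probability distribution; composing, $\sigma_1\mapsto H(\nu^{\sigma_1}\vert\mu^{\sigma_1})$ is measurable, and it is $[0,+\infty]$-valued by Jensen's inequality, since each $\mu^{\sigma_1},\nu^{\sigma_1}$ is a probability measure. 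Thus the integral on the right-hand side is well-defined in $[0,+\infty]$, as is the whole right-hand side, both summands being nonnegative.

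Next I would dispatch the degenerate cases. If $\nu_1\not\ll\mu_1$, then $\nu\not\ll\mu$, so both sides equal $+\infty$ (the nonnegative integral term only helping). If $\nu_1\ll\mu_1$ but $\nu\not\ll\mu$, choose a measurable $A\subset\Sigma$ with $\mu(A)=0<\nu(A)$. Then $0=\mu(A)=\int_{\Sigma_1}\mu^{\sigma_1}(A)\,\mu_1(\d\sigma_1)$ forces $\mu^{\sigma_1}(A)=0$ for $\mu_1$-a.e. $\sigma_1$, hence, since $\nu_1\ll\mu_1$, for $\nu_1$-a.e. $\sigma_1$; on the other hand $\int_{\Sigma_1}\nu^{\sigma_1}(A)\,\nu_1(\d\sigma_1)=\nu(A)>0$, so $\nu^{\sigma_1}(A)>0$ on a set of positive $\nu_1$-measure. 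On the intersection of these two sets, $\nu^{\sigma_1}\not\ll\mu^{\sigma_1}$, so the integral term is $+\infty$, again matching $H(\nu\vert\mu)=+\infty$.

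The substantive case is $\nu\ll\mu$; write $f:=\d\nu/\d\mu$. Using the disintegrations $\mu(\d\sigma)=\mu^{\sigma_1}(\d\sigma_2)\,\mu_1(\d\sigma_1)$ and $\nu(\d\sigma)=\nu^{\sigma_1}(\d\sigma_2)\,\nu_1(\d\sigma_1)$ together with Fubini, I would first check that $\nu_1\ll\mu_1$ with density $g(\sigma_1):=\int_{\Sigma_2}f(\sigma_1,\sigma_2)\,\mu^{\sigma_1}(\d\sigma_2)$, which is $\mu_1$-a.e. finite and $\nu_1$-a.e. positive. Then, by essential uniqueness of disintegrations, the probability kernel $\sigma_1\mapsto\big(f(\sigma_1,\cdot)/g(\sigma_1)\big)\,\mu^{\sigma_1}$ — which one verifies directly to satisfy the rcpd defining relation for $\nu$ relative to $\pi$ — agrees with $\sigma_1\mapsto\nu^{\sigma_1}$ for $\nu_1$-a.e. $\sigma_1$; in particular $\nu^{\sigma_1}\ll\mu^{\sigma_1}$ with $\d\nu^{\sigma_1}/\d\mu^{\sigma_1}=f(\sigma_1,\cdot)/g(\sigma_1)$ for $\nu_1$-a.e. $\sigma_1$. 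Since $f>0$ $\nu$-a.e. and $g>0$ $\nu_1$-a.e., the identity $\log f=\log g+\log(f/g)$ holds $\nu$-almost everywhere, and integrating it against $\nu$ while unfolding the disintegration of $\nu$ yields
\begin{align*}
H(\nu\vert\mu)=\int\log f\,\d\nu &= \int_{\Sigma_1}\log g\,\d\nu_1 + \int_{\Sigma_1}\Big(\int_{\Sigma_2}\log\tfrac{f(\sigma_1,\sigma_2)}{g(\sigma_1)}\,\nu^{\sigma_1}(\d\sigma_2)\Big)\,\nu_1(\d\sigma_1) \\
&= H(\nu_1\vert\mu_1)+\int_{\Sigma_1}H(\nu^{\sigma_1}\vert\mu^{\sigma_1})\,\nu_1(\d\sigma_1).
\end{align*}

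The main obstacle is the integration bookkeeping behind this last display, because $H(\nu\vert\mu)$ can be $+\infty$ even when $\nu\ll\mu$, so the term-by-term integration is not automatically licit. The point that makes it legitimate is the elementary bound $x\log x\ge-\e^{-1}$ on $(0,\infty)$, which forces each of $\int(\log f)^-\d\nu$, $\int(\log g)^-\d\nu_1$ and $\int_{\Sigma_2}(\log(f/g))^-\d\nu^{\sigma_1}$ to be at most $\e^{-1}$; all negative parts being integrable, linearity applies to them and Tonelli's theorem handles the remaining nonnegative positive parts, so the identity is valid as an equality in $[0,+\infty]$ — in particular $H(\nu\vert\mu)=+\infty$ if and only if the right-hand side is. This lemma is a restatement of \cite[Theorem~D.13]{dembo_large_2009}, which may alternatively be invoked directly.
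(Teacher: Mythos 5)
Your proof is correct. The paper itself does not prove this lemma: it is stated as a reminder in Appendix~A and justified only by the citation ``see for instance \cite[Section D.3]{dembo_large_2009}'' (the result is Theorem~D.13 there), which is exactly the alternative you mention in your last sentence. What you supply is the standard self-contained Radon--Nikodym argument behind that reference, and it is carried out carefully: the reduction of measurability to lower semicontinuity of $(P,Q)\mapsto H(P\vert Q)$ composed with the kernel maps, the two degenerate cases ($\nu_1\not\ll\mu_1$, and $\nu_1\ll\mu_1$ with $\nu\not\ll\mu$, where you correctly locate a positive-$\nu_1$-measure set on which $\nu^{\sigma_1}\not\ll\mu^{\sigma_1}$), the identification $\d\nu^{\sigma_1}/\d\mu^{\sigma_1}=f(\sigma_1,\cdot)/g(\sigma_1)$ via essential uniqueness of disintegrations on Polish spaces, and, most importantly, the justification of the term-by-term integration of $\log f=\log g+\log(f/g)$ through the uniform bound $x\log x\geq-\e^{-1}$ on the negative parts, which is precisely the point where a naive splitting would be illegitimate when $H(\nu\vert\mu)=+\infty$. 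The only trade-off is length: the paper buys brevity by outsourcing the argument, while your version makes the appendix self-contained at the cost of reproving a textbook fact.
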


See for instance \cite[Section D.3]{dembo_large_2009}

\begin{lemma}[Contraction for entropy] Let $\psi:\mathcal{X}\to\mathcal{Y}$ be a Borel measurable mapping between Polish spaces. Let $\mu\in\pb(\mathcal{X}) $ and $\nu\in\pb(\mathcal{Y})$, then 
\[H(\nu|\psi_\#\mu) = \inf \{ H(\gamma|\mu) : \gamma\in\pb(\mathcal{X}),\,\,\psi_\#\gamma = \nu),\]
where $\inf\emptyset=\infty$ by convention. 
\end{lemma}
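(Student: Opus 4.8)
The plan is to prove the two inequalities separately, the ``easy'' one being a data-processing estimate and the ``hard'' one amounting to an explicit construction of an optimizer.

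For the inequality $H(\nu\,\vert\,\psi_\#\mu)\le\inf\{H(\gamma\,\vert\,\mu):\psi_\#\gamma=\nu\}$, I would reduce to the marginal decomposition lemma stated above via the graph embedding. Given the Borel map $\psi:\mathcal{X}\to\mathcal{Y}$ between Polish spaces, the map $\iota:x\mapsto(\psi(x),x)\in\mathcal{Y}\times\mathcal{X}$ is a Borel isomorphism onto a Borel subset of the product, and since relative entropy is invariant under Borel isomorphisms one has $H(\gamma\,\vert\,\mu)=H(\iota_\#\gamma\,\vert\,\iota_\#\mu)$ for all $\gamma\ll$-arguments. Applying the marginal decomposition with respect to the first-coordinate projection $\pi:\mathcal{Y}\times\mathcal{X}\to\mathcal{Y}$ and using $\pi\circ\iota=\psi$ yields
\[ H(\gamma\,\vert\,\mu) = H(\psi_\#\gamma\,\vert\,\psi_\#\mu) + \int_{\mathcal{Y}} H\big((\iota_\#\gamma)^y\,\big\vert\,(\iota_\#\mu)^y\big)\,(\psi_\#\gamma)(\d y). \]
When $\psi_\#\gamma=\nu$, the first term on the right is exactly $H(\nu\,\vert\,\psi_\#\mu)$ and the integral is nonnegative, so $H(\gamma\,\vert\,\mu)\ge H(\nu\,\vert\,\psi_\#\mu)$; taking the infimum over admissible $\gamma$ gives the claim. (Equivalently, this is the data-processing inequality and can be read off the Donsker--Varadhan variational formula by restricting the test functions to those of the form $\varphi\circ\psi$.)

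For the reverse inequality it suffices to produce, when $H(\nu\,\vert\,\psi_\#\mu)<\infty$, an admissible $\gamma$ attaining this value (if $H(\nu\,\vert\,\psi_\#\mu)=\infty$ there is nothing to prove, whether or not the constraint set is empty). So assume $\nu\ll\psi_\#\mu$ with density $f:=\d\nu/\d(\psi_\#\mu)$. Since $\mathcal{X}$ is Polish, there is a regular conditional probability $y\mapsto\mu^y\in\pb(\mathcal{X})$ with $\mu^y$ concentrated on the fibre $\psi^{-1}(\{y\})$ and $\mu=\int_{\mathcal{Y}}\mu^y\,(\psi_\#\mu)(\d y)$. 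Define $\gamma:=\int_{\mathcal{Y}}\mu^y\,\nu(\d y)$. A direct computation using that $\mu^y$ charges only the fibre over $y$ shows that $\d\gamma/\d\mu=f\circ\psi$ and that $\psi_\#\gamma=\nu$, so that $\gamma$ is admissible, and moreover $H(\gamma\,\vert\,\mu)=\int\log(f\circ\psi)\,\d\gamma=\int\log f\,\d\nu=H(\nu\,\vert\,\psi_\#\mu)$. This gives $\inf\{H(\gamma\,\vert\,\mu):\psi_\#\gamma=\nu\}\le H(\nu\,\vert\,\psi_\#\mu)$, and combining with the first step yields equality.

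The main obstacle is the measure-theoretic bookkeeping in the first step: checking that the graph of a Borel map between Polish spaces is Borel, that the graph embedding $\iota$ is a Borel isomorphism onto it, and that relative entropy transports unchanged through it, all of which hinge on the standard Borel structure of Polish spaces and the existence of regular conditional probabilities. The construction of the optimal $\gamma$ in the second step is comparatively routine once the disintegration $\mu=\int\mu^y\,(\psi_\#\mu)(\d y)$ is available; the only points requiring care are the identity $\d\gamma/\d\mu=f\circ\psi$ and the verification that $\mu^y$ is carried by $\psi^{-1}(\{y\})$, which together deliver both admissibility of $\gamma$ and the change-of-variables identity for the entropy.
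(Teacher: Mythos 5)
Your proof is correct. Note that the paper does not actually prove this lemma: it simply cites \cite[Lemma A.1]{fischer_form_2014}, so there is no in-paper argument to compare against. Your two-step structure is the standard one and is essentially what the cited reference does: the inequality $H(\nu\,\vert\,\psi_\#\mu)\le H(\gamma\,\vert\,\mu)$ is the data-processing inequality (your graph-embedding reduction to the marginal decomposition lemma is a clean way to get it, and the Donsker--Varadhan route you mention in passing works equally well), while the reverse inequality is obtained by exhibiting the optimizer $\gamma=\int_{\mathcal{Y}}\mu^y\,\nu(\d y)$ built from the disintegration of $\mu$ over the fibres of $\psi$. The two verifications you flag as the only delicate points --- that $\mu^y$ is carried by $\psi^{-1}(\{y\})$, whence $\psi_\#\gamma=\nu$ and $\d\gamma/\d\mu=f\circ\psi$ with $f=\d\nu/\d(\psi_\#\mu)$ --- are exactly the right ones, and your treatment of the degenerate cases (infinite entropy, empty constraint set) is consistent: finiteness of $H(\nu\,\vert\,\psi_\#\mu)$ forces $\nu\ll\psi_\#\mu$ and hence nonemptiness of the constraint set via your construction. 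Nothing is missing.
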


See for instance \cite[Lemma A.1]{fischer_form_2014}.

\section{Labeled Sanov theorem} \label{sec:app}

This appendix is devoted to the proof of a variation on Sanov's classical theorem in large deviation theory that is well-suited for handling digraph measures.

Let $F$ be a Banach space.
Let $( \mathbf{w}^{i,j}_N )_{1 \leq i, j \leq N} \subset [0,1]$ and $( Y^{1,j} )_{1 \leq j \leq N} \subset F$, $N \geq 1$,  be arrays of random variables. We define the 
(random) digraph measure $\alpha_N \in L^\infty ( [0,1], \ps ([0,1]\times I  \times F) )$ by
\[ \alpha_N (\xi) := \frac{1}{N} \sum_{j = 1}^N \delta_{(\mathbf{w}_N^{[\xi]_N,j},j/N,Y^{j,N})}. \]
We consider the spaces 
\[ \A := L^\infty ( [0,1] , \ps ( [0,1]\times I \times F ) ), \qquad \B := L^1 ( [0,1] , \mathrm{BL} ( [0,1]\times I \times F ) ), \]
where $\mathrm{BL} ( [0,1]\times I \times F )$ denotes the Banach space of real-valued bounded-Lipschitz functions endowed with the norm $\lVert \cdot \rVert_\infty + \lVert \cdot \rVert_{\mathrm{Lip}}$.
We notice that $\A$ is included in the topological dual of the Banach space $( \B, \d_{\infty, \mathrm{BL}})$.

\begin{theorem}[Labeled Sanov] \label{thm:LabSanov}
Assume that $(( \mathbf{w}^{i,j}_N )_{1 \leq i \leq N}, Y^{j,N} )_{1 \leq j \leq N}$ are independent and that for some measurable space $\mathcal{U}$, there exist $W \in \B ( I^2 \times \mathcal{U}, [0,1] )$ and $\mu \in \B (I, \ps ( \mathcal{U} \times F))$ such that 
\begin{multline} \label{eq:EXpCV}
\forall \phi \in \B, \quad \frac{1}{N} \sum_{j=1}^N \log \E \exp \bigg[ \int_I \phi ( \xi, \mathbf{w}^{[\xi]_N,j}_N , j/N, Y^{j,N} ) \d \xi \bigg] \xrightarrow[N \rightarrow + \infty]{} \\
\int_I \log \bigg\{ \int_{{\mathcal{U} \times F}} \exp \bigg[ \int_I \phi ( \xi, W ( \xi, \xi', u ) , \xi', y ) \d \xi \bigg] \mu^{\xi'}(\d u,  \d y) \bigg\}\d \xi'. 
\end{multline} 
Then $( \mathrm{Law} ( \alpha_N ) )_{N  \geq 1}$ satisfies the LDP in $( L^\infty ( [0,1] , \ps ( [0,1]\times I \times F ) ), d_{\infty,\mathrm{BL}} )$ with good rate function
\[ I ( \alpha ) := 
\begin{cases}
H ( \nu \vert \d \xi' \mu^{\xi'} ), \quad &\text{if  } \alpha(\xi) ( \d \xi', \d \mathbf{w}, \d y ) = \int_{\mathcal{U}} \delta_{W(\xi,\xi',u)} ( \d \mathbf{w}) \nu( \d \xi', \d u, \d y), \\
+ \infty \quad &\text{otherwise}.
\end{cases}
\]
As a consequence, the sequence $(\alpha_N)_{N \geq 1}$ almost surely converges in $\B$ towards $\overline\alpha : \xi \mapsto \int_{\mathcal{U}} \delta_{W(\xi,\xi',u)} ( \d \mathbf{w})  \d \xi' \mu^{\xi'}(\d u, \d y')$.
\end{theorem}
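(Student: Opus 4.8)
The plan is to derive the large deviation principle from the abstract G\"artner--Ellis (Baldi) theorem in the topological dual of the separable Banach space $(\B,d_{\infty,\mathrm{BL}})$, and then to recast the resulting Legendre--transform rate function into the announced entropic form by convex duality. The first step is the computation of the limiting logarithmic moment generating function. For $\phi\in\B$ one has $\langle\alpha_N,\phi\rangle=\tfrac1N\sum_{j=1}^N\int_I\phi(\xi,w^{[\xi]_N,j}_N,j/N,Y^{j,N})\,\d\xi$, so that the assumed row-wise independence gives
\[
\frac1N\log\E\exp\big(N\langle\alpha_N,\phi\rangle\big)=\frac1N\sum_{j=1}^N\log\E\exp\Big(\int_I\phi(\xi,w^{[\xi]_N,j}_N,j/N,Y^{j,N})\,\d\xi\Big),
\]
which by the hypothesis \eqref{eq:EXpCV} converges to $\Lambda(\phi):=\int_I\log\big\{\int_{\mathcal{U}\times F}\exp[\int_I\phi(\xi,W(\xi,\xi',u),\xi',y)\,\d\xi]\,\mu^{\xi'}(\d u,\d y)\big\}\,\d\xi'$. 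Since the inner exponent $\int_I\phi(\xi,W(\xi,\xi',u),\xi',y)\,\d\xi$ is bounded by $\lVert\phi\rVert_\B$, the functional $\Lambda$ is finite --- hence convex and lower semicontinuous --- on all of $\B$.

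With $\Lambda$ available, the LDP in $(\A,d_{\infty,\mathrm{BL}})$ with rate function $\Lambda^*(\alpha):=\sup_{\phi\in\B}\{\langle\alpha,\phi\rangle-\Lambda(\phi)\}$ should follow from the abstract G\"artner--Ellis / Baldi theorem \cite[Section~4.5]{dembo_large_2009}, provided two points are settled. The main obstacle I anticipate is \emph{exponential tightness} in the fine $d_{\infty,\mathrm{BL}}$ topology; to handle it I would use that the $[0,1]\times I$-marginals of the $\alpha_N(\xi)$ live in a fixed compact set and that their $F$-marginal is the $\xi$-independent empirical measure $\tfrac1N\sum_j\delta_{Y^{j,N}}$, to which the Chernoff bounds from the classical proof of Sanov's theorem apply (binomial tail estimates on the masses of complements of compacts), and then combine this with a Banach--Alaoglu / Arzel\`a--Ascoli compactness argument for the dependence on $\xi$ to produce compacts with exponentially small complements. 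The second point is the regularity of $\Lambda$ required for the lower bound: I would either check that $\Lambda$ is G\^ateaux differentiable on $\B$ (being finite and convex, with differential of explicit exponential-tilt type), or else establish the lower bound directly by the change-of-measure device of Sanov's theorem, tilting the law of the $j$-th block by $\exp[\int_I\phi\,\d\xi]$ and renormalizing.

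It then remains to identify $\Lambda^*$ with $I$. Applying the Donsker--Varadhan variational formula fibrewise, $\log\{\int_{\mathcal{U}\times F}e^{g_{\xi'}}\,\d\mu^{\xi'}\}=\sup_{\nu^{\xi'}\in\ps(\mathcal{U}\times F)}\{\langle\nu^{\xi'},g_{\xi'}\rangle-H(\nu^{\xi'}\mid\mu^{\xi'})\}$, where $g_{\xi'}(u,y):=\int_I\phi(\xi,W(\xi,\xi',u),\xi',y)\,\d\xi$ is bounded. Exchanging the fibrewise supremum with $\int_I(\cdot)\,\d\xi'$ by a measurable-selection argument, writing $\nu(\d\xi',\d u,\d y):=\d\xi'\,\nu^{\xi'}(\d u,\d y)$, and combining Fubini with the chain rule for relative entropy $H(\nu\mid\d\xi'\mu^{\xi'})=\int_I H(\nu^{\xi'}\mid\mu^{\xi'})\,\d\xi'$ (valid since both measures project onto $\mathrm{Leb}_I$), one obtains
\[
\Lambda(\phi)=\sup_{\nu}\big\{\langle\alpha_\nu,\phi\rangle-H(\nu\mid\d\xi'\mu^{\xi'})\big\},\qquad \alpha_\nu(\xi):=\int_{\mathcal{U}}\delta_{W(\xi,\xi',u)}(\d w)\,\nu(\d\xi',\d u,\d y),
\]
the supremum running over $\nu\in\ps(I\times\mathcal{U}\times F)$. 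Grouping this supremum by the value $\alpha=\alpha_\nu$ shows $\Lambda=I^*$. Since $I$ is convex (an infimal projection of the convex functional $H(\cdot\mid\d\xi'\mu^{\xi'})$ along the affine map $\nu\mapsto\alpha_\nu$) and, via the contraction property of relative entropy \cite[Lemma~A.1]{fischer_form_2014} recalled in Appendix~\ref{app:ldp}, may be written as a relative entropy of $\alpha$ against an explicit reference measure --- hence is a good rate function, in particular lower semicontinuous --- the Fenchel--Moreau theorem yields $\Lambda^*=I^{**}=I$, the claimed rate function.

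Finally, for the almost sure convergence, I would note that $I$ has $\overline\alpha$ as its unique zero: if $I(\alpha)=0$ then compactness of the sublevel set forces the existence of $\nu$ with $H(\nu\mid\d\xi'\mu^{\xi'})=0$, i.e.\ $\nu=\d\xi'\mu^{\xi'}$, so $\alpha=\alpha_{\d\xi'\mu^{\xi'}}=\overline\alpha$. A good-rate-function LDP with a unique zero then gives $\limsup_N N^{-1}\log\mathbb{P}(\alpha_N\notin V)<0$ for every $d_{\infty,\mathrm{BL}}$-neighbourhood $V$ of $\overline\alpha$, and the Borel--Cantelli lemma concludes that $\alpha_N\to\overline\alpha$ almost surely.
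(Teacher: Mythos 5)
Your proposal follows essentially the same route as the paper: compute the limiting log-moment generating functional $\Lambda$ from row-wise independence and \eqref{eq:EXpCV}, invoke the abstract G\"artner--Ellis machinery in the dual of $\B$, identify $\Lambda^\star$ with $I$ by convex duality, and conclude the a.s.\ convergence from the unique zero of $I$ via Borel--Cantelli. Two points of comparison are worth noting. First, the ``main obstacle'' you anticipate --- exponential tightness in the $d_{\infty,\mathrm{BL}}$ topology --- is precisely what the paper avoids by applying \cite[Corollary 4.6.11-(a)]{dembo_large_2009} in the $\sigma^\star$-topology induced by $\B$ on the topological dual $\mathcal{X}$: there, finiteness of $\Lambda$ everywhere together with Gateaux differentiability suffices, the level sets of $\Lambda^\star$ being weak-$\star$ compact for free. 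Your proposed remedy (Chernoff bounds as in classical Sanov plus an Arzel\`a--Ascoli argument in $\xi$) is also shakier than it looks, since the $Y^{j,N}$ are only assumed row-wise independent, not i.i.d., so the binomial tail estimates of the classical proof do not apply verbatim; you would need a uniform exponential moment condition that is not part of the hypotheses. Second, for the identification $\Lambda^\star=I$, you go through the fibrewise Donsker--Varadhan formula plus a measurable-selection argument to exchange $\sup$ and $\int_I\d\xi'$, whereas the paper verifies $\Lambda=I^\star$ directly by exhibiting the optimal exponential-tilt density and applying Jensen's inequality in both directions (on $I$ for one bound, on $\mathcal{U}\times F$ for the other), then concludes by \cite[Lemma 4.5.8]{dembo_large_2009}; the two arguments are equivalent in substance, but the paper's avoids the measurable-selection step. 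The final Borel--Cantelli argument is identical.
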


If the $( \mathbf{w}^{i,j}_N )_{1 \leq i,j \leq N}$ are i.i.d., and $Y^{j,N} = Y^j$ for an i.i.d. sequence $ (Y^{j} )_{j \geq 1}$, then we notice that the convergence \eqref{eq:EXpCV} reduces to the Riemann sum convergence theorem.

\begin{proof}
We adapt the proof of the Sanov theorem from \cite[Theorem 6.2.10]{dembo_large_2009} in the spirit of the Gärtner-Ellis approach.
This proof is based on \cite[Corollary 4.6.11-(a)]{dembo_large_2009}.

We consider $( \alpha_N )_{N \geq 1}$ as a sequence taking values in the topological dual $\mathcal{X}$ of $( \B, \d_{\infty, \mathrm{BL}})$.
This choice satisfies \cite[Assumption 4.6.8]{dembo_large_2009} for $\mathcal{W} = \B$.
From the independence of $(( \mathbf{w}^{i,j}_N )_{1 \leq i \leq N}, Y^{j,N} )_{1 \leq j \leq N}$ and \eqref{eq:EXpCV}, we get that
\[ \forall \phi \in \B, \quad \frac{1}{N} \log \E \exp \bigg[ N \int_{[0,1]\times I \times F} \phi ( \xi, \mathbf{w},\xi',y) \alpha_N ( \xi) ( \d \mathbf{w} , \d \xi',\d y ) \d \xi \bigg] \xrightarrow[N \rightarrow + \infty]{} \Lambda ( \phi ), \]
where $\Lambda ( \phi )$ denotes the right-hand side of \eqref{eq:EXpCV}.
This quantity is finite everywhere in $\B$. 
Moreover, for every $\varphi, \psi \in \B$, $g : t \mapsto \Lambda( \phi + t \psi )$ is differentiable at $t=0$ with
\[ g'(0) = \int_I \frac{\int \int_I \psi ( \xi,W(\xi,\xi,u),\xi',y) \d \xi \exp [ \int_0^1 \phi ( \xi,W(\xi,\xi,u),\xi',y) \d \xi ] \mu^{\xi'}(\d u,\d y)}{\int \exp [ \int_I \phi ( \xi,W(\xi,\xi,u),\xi',y) \d \xi ] \mu^{\xi'}(\d u,\d y)} \d \xi', \]
the dominations being straightforward from $\phi,\psi \in \B$.
$\Lambda$ thus being Gateaux-differentiable, all the conditions of \cite[Corollary 4.6.11-(a)]{dembo_large_2009} are satisfied, so that $( \mathrm{Law} ( \alpha_N ) )_{N  \geq 1}$ satisfies the LDP with good convex rate function
\[ \Lambda^\star (\omega) := \sup_{\phi \in \B} \, \langle \omega, \phi \rangle - \Lambda ( \phi ). \]
Adapting \cite[Lemma 6.2.13]{dembo_large_2009},
let us now show that $\Lambda^\star = I$, when extending $I$ to $\mathcal{X}$ by $I(\omega) = +\infty$ if $\omega \notin \A$.
As a consequence of Fatou's lemma, the convex function $I$ is lower semi-continuous on $\mathcal{X}$ -- this is a slight variation on \cite[Lemma 6.2.16]{dembo_large_2009}.
Moreover, $\mathcal{X}$ endowed with the $\sigma^\star$-topology induced by $\B$ is a locally convex Hausdorff space, whose topological dual is $\B$ from \cite[Theorem B.8]{dembo_large_2009}.
From \cite[Lemma 4.5.8]{dembo_large_2009}, it is now sufficient to show that for every $\phi \in \B$,
\begin{equation} \label{eq:AbsLegendre}
\Lambda (\phi) = \sup_{\omega \in \mathcal{X}} \, \langle \omega, \phi \rangle - I (\omega).
\end{equation} 
From the expression of $I$, we can restrict the minimization to those $\omega \in \mathcal{X}$ such that
\[ \omega (\xi) = \int_{\mathcal{U}} \delta_{W(\xi,\xi',u)} ( \d \mathbf{w}) f(\xi',u,y) \d \xi'\mu^{\xi'}(\d u, \d y), \]
for some $f$ which is a probability density for the measure $\d\xi'\mu^{\xi'}$ on $I\times\mathcal{U}\times F$. Then, for any $\omega\in\mathcal{X}$ and $\phi\in\mathcal{B}$, one can write 
\begin{align*}
    &\langle \omega,\phi\rangle - I(\omega) =\\ &\int_{I\times \mathcal{U}\times F} f(\xi',u,y)\left(\int_I \phi(\xi,W(\xi,\xi',u),\xi',y)\d\xi - \log f(\xi',u,y)\right )\d\xi'\mu^{\xi'}(\d u,\d y)
\end{align*}
Considering the function 
\begin{align*}&f(\xi',u,y) = \frac{\exp\left[\int_I \phi(\xi,W(\xi,\xi',u),\xi',y)\d\xi\right]}{\mathcal{Z}},\\ 
&\mathcal{Z}=\int_{I}\int_{\mathcal{U}\times F} \exp\left[\int_I \phi(\xi,W(\xi,\xi',u),\xi',y)\d\xi\right]\mu^{\xi'}(\d u,\d y)\d\xi',\end{align*}
one gets 
\[\langle \omega,\phi\rangle - I(\omega) = \log\mathcal{Z}\geq \Lambda(\phi),\]
where the last inequality comes from Jensen's inequality on the space $I$. For the converse inequality, for any probability density function $f$ on $I\times\mathcal{U}\times F$, we apply Jensen's inequality on the space $\mathcal{U}\times F$ to get 
\begin{align*}
\Lambda (\phi) &\geq \int_I \log \bigg\{ \int_{{\mathcal{U} \times F}} \1_{f >0} f^{-1}(\xi',u,y) \exp \bigg[ \int_I \phi ( \xi, W(\xi,\xi',u) , \xi', y ) \d \xi \bigg] \\
&\qquad\qquad\qquad\qquad\qquad\qquad\qquad\qquad \times f(\xi',u,y) \mu^{\xi'}(\d u,  \d y) \bigg \}\d \xi', \\
&\geq \int_{I\times \mathcal{U}\times F} \1_{f > 0} \log \frac{\exp \big[ \int_I\phi ( \xi, W(\xi,\xi',u) , \xi', y ) \d \xi \big]}{f(\xi',u,y)} f(\xi',u,y) \mu^{\xi'}(\d u,  \d y) \d \xi',\\
&= \langle \omega,\phi\rangle - I(\omega).
\end{align*}

Since $\overline\alpha$ is the unique minimiser of the good rate function $I$, the a.s. convergence then classically follows from the Borel-Cantelli lemma.
\end{proof}

\section*{Acknowledgments}

This work was initiated when AD was affiliated to the Institute for the Advanced Study of Human Biology (ASHBi), Kyoto University. LPC acknowledges the hospitality of ASHBi where part of this work was carried out. The work of AD is partially supported by the KAKENHI Grant-in-Aid for Early-Career Scientists (JP23K13015).

\printbibliography

\end{document}